\newcommand{\N}{\mathbb{N}}
\newcommand{\C}{\mathbb{C}}
\newcommand{\D}{\mathbb{D}}
\newcommand{\T}{\mathbb{T}}
\newcommand{\Lcont}{\mathcal{L}}
\newcommand{\X}{\mathcal{X}}
\newcommand{\I}{\mathcal{I}}
\newcommand{\A}{\mathcal{A}}
\newcommand{\norm}[1]{\left\Vert #1\right\Vert}
\newcommand{\abs}[1]{\left\lvert #1 \right\rvert}
\newcommand{\scal}[2]{\ensuremath{\left\langle #1|#2 \right\rangle}\xspace}
\newcommand{\Hol}{\text{Hol}}
\theoremstyle{plain}
\newtheorem{statement}{}[section]
\newtheorem{theo}[statement]{Theorem}
\newtheorem{prop}[statement]{Proposition}
\newtheorem{coro}[statement]{Corollary}
\newtheorem{lem}[statement]{Lemma}
\newtheorem{quest}[statement]{Question}
\newtheorem{theospec}{Theorem}
\newtheorem{corospec}[theospec]{Corollary}
\theoremstyle{definition}
\newtheorem{rem}[statement]{Remark}
\title{Cyclicity of the shift operator through Bezout identities}
\author{Emmanuel Fricain and Romain Lebreton}
\address{Laboratoire Paul Painlevé, Université de Lille, 59655 Villeneuve d'Ascq Cédex}
\email{emmanuel.fricain@univ-lille.fr}
\email{romain.lebreton@univ-lille.fr}
\keywords{Cyclicity, Banach algebra, shift operator, De Branges-Rovnyak spaces, Besov-Dirichlet spaces}
\subjclass[2020]{30H10, 30H45, 30H80, 47B32}
\thanks{The authors were supported by Labex CEMPI (ANR-11-LABX-0007-01)}
\begin{document}

\maketitle

\begin{abstract}
In this paper, we study the cyclicity of the shift operator $S$ acting on a Banach space $\X$ of analytic functions on the open unit disc $\D$. We develop a general framework where a method based on a corona theorem can be used to show that if $f,g\in\X$ satisfy $|g(z)|\leq |f(z)|$, for every $z\in\D$, and if $g$ is cyclic, then $f$ is cyclic. We also give sufficient conditions for cyclicity in this context. This enable us to recapture some recent results obtained in de Branges-Rovnayk spaces, in  Besov--Dirichlet spaces and in weighted Dirichlet type spaces. 
\end{abstract}

\section{Introduction}

If $T$ is a bounded linear operator on a Banach space $\X$, then $T$ is said to be \textit{cyclic} if there exists a vector $x \in\X$ such that the orbit of $x$ under $T$, defined by $\{T^n x : n \ge 0 \}$, generates a dense subspace in $\X$. Such a vector, if it exists, is called a \textit{cyclic vector} for $T$. The characterization of cyclic vectors of a given operator is a challenging question which has connections with the famous invariant subspace problem. 

The cyclicity problem with respect to the (forward) shift operator
$S:f(z)\longmapsto zf(z)$ has been completely solved by A. Beurling \cite{Beurling} in the context of the Hardy space $H^2$ of the open unit disc $\D$: a function $f$ in $H^2$ is cyclic for $S$ if and only if $f$ is an outer function, in the sense that $f$ can be written as 
\[
f(z)=c\exp\left(\int_\T \frac{\xi+z}{\xi-z}\log(\varphi(\xi))\,dm(\xi)\right),\qquad |z|<1,
\]
where $|c|=1$, $m$ is normalized Lebesgue measure on the unit circle $\T$ and $\varphi$ is a nonnegative function in $L^2(\T)$ such that $\log(\varphi)\in L^1(\T)$.  

A similar question can be stated (and has been studied) in various Banach spaces of analytic functions where the shift operator acts boundedly, e.g. in Bergman or Dirichlet spaces. However, the situation in Hardy space is unique in the sense that in most other spaces, there are no known characterizations despite numerous efforts by many mathematicians. Cyclic vectors in the Dirichlet space were initially studied by L. Carleson \cite{MR50011}, and later by L. Brown and A. Shields \cite{BrownShields}. In this last paper, the authors conjectured that a function $f$ in the Dirichlet space $\mathcal{D}$ is cyclic for the shift operator if and only if $f$ is outer and its boundary zero set is of logarithmic capacity zero. This conjecture is still open despite significant progress \cite{CyclDirichlet}.  Brown--Shields also posed the following question about cyclic vectors for the shift acting on a  general Banach space $\X$ of analytic functions on $\D$ (with some standard properties):
\begin{quest}[\cite{BrownShields}]\label{question-BS}
If $f,g\in\X$ satisfy $|g(z)|\leq |f(z)|$ for every $z\in\D$ and if $g$ is cyclic for the shift, then must $f$ be cyclic for the shift? 
\end{quest}
They proved that if the algebra of multipliers of $\X$ coincides with $H^\infty$, the algebra of analytic and bounded functions on $\D$, then the answer is positive. They also showed that this is the case when $\X$ coincides with the Dirichlet space $\mathcal D$. Finally, if $f\in\mathcal D_2$ (the weighted Hardy space on $\D$ with weight $(n+1)^2$) and if $f$ has at most countably many zeros on $\T$, then $f$ is cyclic for the shift in $\mathcal D$. Note also the reference \cite{RichterSundbergDirichlet} where the authors gave similar results in the context of Dirichlet space $\mathcal D(\mu)$ for $\mu$ a nonnegative finite Borel measure on $\T$. 

In \cite{MR2979822}, Carleson's corona theorem is used to get two results on cyclicity for singular inner functions in weighted Bergman type spaces on the open unit disc. In \cite{TheseVectCycl,Egueh-Kellay-Zarrabi}, this method based on corona theorem is pursued to get a positive answer to Question \ref{question-BS} in the context of Besov-Dirichlet spaces $\mathcal D_\alpha^p$. See also \cite{MR2056436,MR2679531,MR769756}.

Inspired by \cite{TheseVectCycl,Egueh-Kellay-Zarrabi}, the aim of this paper is to develop a general framework where the method based on a corona theorem can be applied to get a positive answer to Question \ref{question-BS}. More precisely, consider a Banach space $\mathcal X$ of analytic functions on $\D$ satisfying standard assumptions, and a Banach algebra $\mathcal A$ which is contained in the mutilpliers algebra of $\X$ and which satisfies a corona theorem with some control on the solutions. The main results of this paper are the following. See Subsections \ref{subsection-general-context} and \ref{subsection-operator-tools} for the precise statement of assumptions (H1) to (H8). 
\begin{theospec}\label{theorem-1}
Let $\X$ satisfying (H1) to (H3) and let $\mathcal A$ satisfying  (H4) to  (H6). Then there exists $N\in\mathbb N^*$ such that for every $f,g\in\mathcal A$ satisfying $|g(z)|\leq |f(z)|$ for every $z\in\mathbb D$, we have
\[
[g^N]_\X\subset [f]_\X.
\]
\end{theospec}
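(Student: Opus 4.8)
The plan is to prove the equivalent statement that $g^N \in [f]_\X$ for a suitable $N \in \N^*$: since $[f]_\X$ is, by definition, a closed $S$-invariant subspace, $g^N \in [f]_\X$ forces $[g^N]_\X \subseteq [f]_\X$. So everything reduces to approximating $g^N$, in the norm of $\X$, by elements of the form $f h$ with $h$ a multiplier of $\X$. Here I would first record the harmless-looking but essential remark that if $h \in \A$ then $f h \in [f]_\X$ automatically: polynomials being dense in $\A$ (a property of $\A$ I expect from (H4)) and $\A$ embedding continuously in $\operatorname{Mult}(\X)$ by (H4), polynomial approximants $p_n \to h$ in $\A$ yield $p_n f \to h f$ in $\X$ with each $p_n f$ in the polynomial orbit of $f$.

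The next observation is that pointwise domination gives a bounded quotient: since $|g(z)| \le |f(z)|$ on $\D$, every zero of $f$ is a zero of $g$ of at least the same order, so $\psi := g/f$ extends holomorphically to $\D$ with $\|\psi\|_\infty \le 1$ and $g = \psi f$. If $\psi$ is a unimodular constant then $g$ is a scalar multiple of $f$ and there is nothing to prove; otherwise the maximum principle gives $|\psi| < 1$ on $\D$, so for $0 < \rho < 1$ the dilate $\psi_\rho := \psi(\rho\,\cdot)$ is holomorphic on a neighbourhood of $\overline{\D}$, satisfies $\|\psi_\rho\|_\infty \le M_\rho := \max_{|w|\le\rho}|\psi(w)| < 1$, and lies in $\A$ (its Taylor coefficients decay geometrically, and one uses density of polynomials in $\A$ with the growth of $\|z^n\|_\A$ controlled by (H4)). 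These $\psi_\rho$ — equivalently, the solutions of an auxiliary corona problem in $\A$ — are the natural multiplier approximants to $g^N/f = \psi^N f^{N-1}$.

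The core is then a quantitative approximation scheme built on the corona theorem in $\A$ from (H4)–(H6). Applied, for instance, to the data $\{f,\varepsilon\}$ (bounded below by $\varepsilon$), it gives $a_\varepsilon,b_\varepsilon\in\A$ with $f a_\varepsilon + \varepsilon b_\varepsilon = 1$ and $\|a_\varepsilon\|_\A,\|b_\varepsilon\|_\A \le \varphi(\varepsilon)$, where $\varphi$ is the corona-control function of (H6). Multiplying by $g^N$ and, where useful, replacing $\psi$ by a dilate $\psi_\rho$, I would write
\[
g^N = f\,h_{\varepsilon,\rho} + r_{\varepsilon,\rho},\qquad h_{\varepsilon,\rho}\in\A,
\]
where $h_{\varepsilon,\rho}$ is an explicit product of $g$, powers of $f$, the corona solutions and $\psi_\rho$, while the remainder $r_{\varepsilon,\rho}$ is a bounded holomorphic function times $f^{N}$ (exploiting $|g^N|\le|f|^N$), with $H^\infty$-norm controlled by $\varepsilon\varphi(\varepsilon)$ and by $\rho$. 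Since $f h_{\varepsilon,\rho}\in[f]_\X$ by the first paragraph, it remains to choose $N$ large and then let $\varepsilon\to 0$, $\rho\to 1$ so that $\|r_{\varepsilon,\rho}\|_\X\to 0$; closedness of $[f]_\X$ then yields $g^N\in[f]_\X$.

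The step I expect to be the main obstacle is precisely this last estimate and the choice of $N$. Pointwise, or even uniform, smallness of a holomorphic factor does not by itself control its effect on the $\X$-norm of a product, so one must genuinely use the structural hypotheses (H1)–(H3) on $\X$ — presumably some dilation-invariance, some control of multiplication, and/or a way to compare $\|f^N v\|_\X$ with $\|v\|_\infty$ — to turn the available bounds into honest decay of $\|r_{\varepsilon,\rho}\|_\X$. The appearance of a fixed power $N>1$ rather than $N=1$ is the signature of this difficulty: the corona-control $\varphi(\varepsilon)$ deteriorates as $\varepsilon\to 0$, and it is the extra vanishing carried by $g^N=\psi^N f^N$ — quite possibly exploited through an iteration of the scheme whose errors are summed as a geometric series — that finally defeats this deterioration and forces the remainder to vanish.
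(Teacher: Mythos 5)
Your reduction to showing $g^N\in[f]_\X$ and your preliminary observation that $fh\in[f]_\X$ for every $h\in\A$ are both correct (the latter is the paper's Lemma~\ref{IdealfIncluCyclf}, though it must be proved by approximating $h$ by polynomials in the norm of $\X$, not of $\A$: under (H4) alone polynomials need not belong to $\A$, let alone be dense there). The genuine gap lies in the core approximation scheme. First, the dilates $\psi_\rho$ of $\psi=g/f$ have no reason to lie in $\A$: under (H1)--(H6) one does not know that $\chi_1\in\A$, nor any bound on $\norm{\chi_n}_\A$ (those are exactly the extra hypotheses (H7)--(H8), which are \emph{not} assumed in Theorem~\ref{theorem-1}), so geometric decay of Taylor coefficients does not place $\psi_\rho$ in $\A$. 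Second, and more seriously, the quantitative step you yourself flag as the main obstacle is not merely delicate but fails as set up: the corona identity $fa_\varepsilon+\varepsilon b_\varepsilon=1$ gives $g^N=f(a_\varepsilon g^N)+\varepsilon b_\varepsilon g^N$, and the remainder satisfies only $\norm{\varepsilon b_\varepsilon g^N}_\A\lesssim \varepsilon\cdot\varepsilon^{-A}\norm{g}_\A^N$, which does not tend to $0$ since $A\geq 1$. Nothing in (H1)--(H6) converts the pointwise smallness $|g|^N\leq|f|^N$ into smallness of this remainder in the norm of $\A$ or of $\X$, and no choice of $N$ or iteration of the scheme is exhibited that would achieve this; the argument therefore does not close.

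The paper's proof sidesteps this estimate entirely by a different use of (H6). It applies the corona hypothesis to the pair $(1-\lambda g,\,f)$ for each $\lambda\in\C^*$, noting that $|g|\leq|f|$ forces $\inf_\D\left(|1-\lambda g|+|f|\right)\geq\min\left(\tfrac12,\tfrac1{2|\lambda|}\right)$, then passes to the quotient Banach algebra $\A/\I_f$, where $\Pi(1-\lambda g)$ becomes invertible with $\norm{\Pi(1-\lambda g)^{-1}}=O(|\lambda|^{2A-1})$ as $|\lambda|\to\infty$. For any functional $\ell$ on the quotient, $\lambda\mapsto\langle\Pi(1-\lambda g)^{-1}|\ell\rangle$ is an entire function of polynomial growth whose Taylor coefficients at $0$ are $\langle\Pi(g^n)|\ell\rangle$; Liouville's theorem kills all coefficients of index $n>2A-1$, so $g^N\in\I_f\subset[f]_\X$ with $N=[2A]$. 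This is where the power $N$ really comes from (the growth exponent in the corona bound), and it requires no norm control on quotients, dilates, or remainders. To repair your argument you would need either to adopt this Liouville mechanism or to add a hypothesis allowing passage from pointwise to norm estimates, which the theorem does not grant.
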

Here for $f\in\X$, we denote by $[f]_\X$ the smallest $S$-invariant subspace containing $f$, that is 
\[
[f]_\X=\overline{\{pf:p\in\mathcal P\}}^\X,
\]
where $\mathcal P$ is the set of polynomials and $\overline{\{\cdots\}}^\X$ denotes the closure of $\{\cdots\}$ in $\X$. As a corollary, we will get a (partial) positive answer to Brown--Shields question in our context.
\begin{corospec}\label{corollaire-1}
Let $\X$ satisfying  (H1) to  (H3) and let $\mathcal A$ satisfying (H4) to (H6). Let $f,g\in\mathcal A$ such that for every $z\in\D$, we have $|g(z)|\leq |f(z)|$. Suppose $g$ is cyclic  for $S$ in $\X$. Then $f$ is cyclic for $S$ in $\X$. 
\end{corospec}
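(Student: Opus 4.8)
The plan is to deduce the corollary from Theorem~\ref{theorem-1} together with the elementary fact that, in the present setting, every positive power of a cyclic element of $\mathcal A$ is again cyclic for $S$.

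Since $|g(z)|\le|f(z)|$ for all $z\in\D$, Theorem~\ref{theorem-1} provides an integer $N\in\mathbb N^*$ with $[g^N]_\X\subset[f]_\X$. As $[f]_\X\subseteq\X$, it is enough to prove that $[g^N]_\X=\X$, i.e.\ that $g^N$ is cyclic. For this I would show by induction on $k\ge1$ that $[g^k]_\X=\X$. The base case $k=1$ is exactly the cyclicity of $g$. For the inductive step, assume $[g^k]_\X=\X$. Because $\mathcal A$ is contained in $\mathrm{Mult}(\X)$ (part of assumptions (H4)--(H6)), the multiplication operator $M_g\colon h\mapsto gh$ is bounded on $\X$; since $M_g(pg^k)=pg^{k+1}\in[g^{k+1}]_\X$ for every polynomial $p$ and $[g^{k+1}]_\X$ is closed, continuity of $M_g$ gives
\[
g\X \;=\; M_g\big([g^k]_\X\big)\;\subseteq\;\overline{\{\,pg^{k+1}:p\in\mathcal P\,\}}^\X\;=\;[g^{k+1}]_\X .
\]
On the other hand, polynomials belong to $\X$ (a standard requirement among (H1)--(H3)), so $pg=g\cdot p\in g\X$ for every $p\in\mathcal P$, hence $[g]_\X=\overline{\{pg:p\in\mathcal P\}}^\X\subseteq\overline{g\X}^\X\subseteq[g^{k+1}]_\X$. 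Since $[g]_\X=\X$ by hypothesis, this forces $[g^{k+1}]_\X=\X$ and closes the induction.

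Combining the two ingredients: $g$ cyclic gives $[g]_\X=\X$, the induction upgrades this to $[g^N]_\X=\X$, and then $[f]_\X\supseteq[g^N]_\X=\X$, so $f$ is cyclic for $S$ in $\X$.

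I do not anticipate a genuine obstacle here, since the substance is entirely contained in Theorem~\ref{theorem-1}; the only care needed is in checking that the listed hypotheses do guarantee $\mathcal P\subseteq\X$ and $\mathcal A\subseteq\mathrm{Mult}(\X)$ with each $M_g$ bounded, and in the routine verification that a bounded multiplier maps the closure $[g^k]_\X$ into $[g^{k+1}]_\X$.
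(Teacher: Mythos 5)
Your proposal is correct and follows essentially the same route as the paper: apply Theorem~\ref{theorem-1} to obtain $[g^N]_\X\subset[f]_\X$ and then verify that $g^N$ is cyclic by induction on the power. The paper carries out that induction via Lemma~\ref{cyclProdXA} and Corollary~\ref{puissCycl} (an explicit polynomial-approximation estimate using the multiplier norm of $g$), whereas you use the continuity of $M_g$ and the inclusion $M_g\bigl([g^k]_\X\bigr)\subseteq[g^{k+1}]_\X$ --- the same idea in a slightly more topological packaging.
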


Finally, combining this approach based on a corona theorem and a tauberian result of Atzmon, we also prove the following. The disc algebra consisting of holomorphic functions on $\D$ which are continuous on the closed unit disc $\overline{\D}$ is denoted by $A(\D)$, and for $f\in A(\D)$, we denote by $\mathcal Z(f)$ the boundary zero sets of $f$, that is 
\[
\mathcal Z(f)=\{\zeta\in\T:f(\zeta)=0\}.
\]
\begin{theospec}\label{theorem-2}
Let $\X$ satisfying (H1) to  (H3) and let $\mathcal A$ satisfying (H4) to (H8). Assume that there exists $\zeta_0\in\T$ such that $z-\zeta_0$ is cyclic for $S$ in $\X$. Let $f\in\mathcal A\cap A(\D)$ be an outer function such that $\mathcal Z(f)=\{\zeta_0\}$. Then $f$ is cyclic for $S$ in $\X$. 
\end{theospec}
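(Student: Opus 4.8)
The strategy is to transfer the cyclicity of $z-\zeta_0$ to $f$ by showing that $(z-\zeta_0)^{k}\in[f]_\X$ for some $k\in\N^{*}$. This suffices: since the shift is bounded on $\X$, cyclicity of $z-\zeta_0$ gives polynomials $q_{n}$ with $(z-\zeta_0)q_{n}\to 1$ in $\X$, and multiplying by $(z-\zeta_0)^{k}$ yields $(z-\zeta_0)^{k+1}q_{n}\to(z-\zeta_0)^{k}$, so $(z-\zeta_0)^{k}\in[(z-\zeta_0)^{k+1}]_\X$; an immediate induction gives $[(z-\zeta_0)^{k}]_\X=[z-\zeta_0]_\X=\X$ for every $k\ge 1$, whence $(z-\zeta_0)^{k}\in[f]_\X$ forces $[f]_\X=\X$. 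Assume then, for a contradiction, that $f$ is not cyclic, and let $Y:=\X/[f]_\X\neq\{0\}$ with the operator $\tilde S$ induced by $S$; this is legitimate because $[f]_\X$ is $S$-invariant and, polynomials being dense in $\mathcal A$ and $\mathcal A\subseteq M(\X)$ continuously, $[f]_\X$ is also invariant under multiplication by any element of $\mathcal A$, so $\mathcal A$ acts on $Y$.

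Step 1: $\sigma(\tilde S)=\{\zeta_0\}$. Let $\lambda\in\C$ with $\lambda\neq\zeta_0$. Since $f$ is outer it does not vanish on $\D$, and since $\mathcal Z(f)=\{\zeta_0\}$ with $f\in A(\D)$ it does not vanish on $\T\setminus\{\zeta_0\}$; hence $f$ and $z-\lambda$ have no common zero in $\overline{\D}$, and by continuity of $f$ on $\overline{\D}$ one has $\inf_{z\in\D}\big(|f(z)|+|z-\lambda|\big)>0$. By the corona theorem for $\mathcal A$ (H6) there are $u,v\in\mathcal A$ with $(z-\lambda)u+fv=1$; reducing modulo $[f]_\X$, where $fv$ is absorbed, shows that $\tilde S-\lambda$ is invertible on $Y$, with inverse multiplication by $u$. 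Thus $\sigma(\tilde S)\subseteq\{\zeta_0\}$, and since $Y\neq\{0\}$ its spectrum is nonempty, so $\sigma(\tilde S)=\{\zeta_0\}$; in particular $\tilde S$ is invertible.

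Step 2: growth of $\tilde S^{\pm n}$. The forward bound $\|\tilde S^{n}\|_{B(Y)}\le\|z^{n}\|_{M(\X)}=O(n^{A})$ for some $A\ge 0$ follows from the standing assumptions on $\X$. The crucial backward bound is that $\|\tilde S^{-n}\|_{B(Y)}=O\!\big(e^{\varepsilon\sqrt n}\big)$ for every $\varepsilon>0$. Here $\tilde S^{-n}$ is multiplication on $Y$ by a function $w_{n}\in\mathcal A$ with $z^{n}w_{n}\equiv 1\pmod f$, which one produces by solving a Bézout equation $z^{n}u_{n}+f\psi_{n}=1$ via the quantitative corona theorem: because $f$ is outer one has $\int_{\T}\log|f|\,dm>-\infty$, and this allows one to let the separation parameter tend to $0$ as slowly as $e^{-\sqrt n}$ while keeping the Bézout data in $\mathcal A$ with norm $O(e^{\varepsilon\sqrt n})$. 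This subexponential estimate is the heart of the proof and the main obstacle; it is precisely for it that hypotheses (H7)--(H8) are designed, encoding the compatibility, through a Levinson--type summability, between the dependence of corona solutions on the separation and the outer behaviour of $f$ near $\zeta_0$.

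Step 3: conclusion via Atzmon. We now have an invertible operator $\tilde S$ on the nonzero Banach space $Y$ with $\sigma(\tilde S)=\{\zeta_0\}$, $\|\tilde S^{n}\|=O(n^{A})$ and $\|\tilde S^{-n}\|=O(e^{\varepsilon\sqrt n})$ for all $\varepsilon>0$. Atzmon's tauberian theorem then forces $(\tilde S-\zeta_0 I)^{k}=0$ for some $k\in\N^{*}$ (one may take $k=\lfloor A\rfloor+1$). Lifting this identity to $\X$ gives $(z-\zeta_0)^{k}\X\subseteq[f]_\X$, and since the constants belong to $\X$ we obtain $(z-\zeta_0)^{k}\in[f]_\X$; by the reduction of the first paragraph this means $\X=[(z-\zeta_0)^{k}]_\X\subseteq[f]_\X$, contradicting $Y\neq\{0\}$. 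Hence $f$ is cyclic for $S$ in $\X$.
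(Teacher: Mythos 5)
Your overall architecture is the paper's: pass to a quotient on which multiplication by $z$ has spectrum $\{\zeta_0\}$, verify Atzmon-type growth conditions via the quantitative corona hypothesis (H6), conclude $(z-\zeta_0)^k\in[f]_\X$, and transfer cyclicity from $z-\zeta_0$. The differences are cosmetic: the paper works in the Banach algebra $\A/\I_f$ and uses the resolvent form of Atzmon's theorem (Corollary~\ref{coroThmAtzmon}), while you work in $\X/[f]_\X$ and use the iterates form. Both are viable, but note that your justification of the $\A$-invariance of $[f]_\X$ --- ``polynomials being dense in $\A$'' --- invokes something that is neither assumed nor true for typical choices of $\A$ such as $H^\infty$ or $\mathfrak{M}(\mathcal H(b))$; the invariance does hold, but by the argument of Lemma~\ref{IdealfIncluCyclf}: an element $u\in\A\subset\X$ is approximated by polynomials in the $\X$-norm, and since $f\in\mathfrak{M}(\X)$ one gets $uf\in[f]_\X$ and then $u\,[f]_\X\subset[f]_\X$ by boundedness of $M_u$.

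The genuine gap is Step 2, which you yourself call the heart of the proof and then do not prove, and for which the explanation you offer is wrong. The bound $\log\norm{\tilde S^{-n}}=o(\sqrt n)$ has nothing to do with (H7)--(H8), and no ``Levinson-type summability'' is involved; (H7)--(H8) serve only the forward side (they give $\norm{\chi_n}_\A\le Cn^p$, hence $\norm{\tilde S^n}=O(n^p)$, and $\Hol(\overline{\D})\subset\A$ so that $(z-\lambda)^{-1}\in\A$ for $|\lambda|>1$). What actually drives the backward estimate is the combination of outerness with (H6): since $f$ is outer, $(1-|z|)\log|f(z)|\to 0$ as $|z|\to 1^-$ (Shapiro--Shields), so for every $\varepsilon>0$ there is $c_\varepsilon>0$ with $|f(z)|\ge c_\varepsilon\exp\left(-\varepsilon/(1-|z|)\right)$; on the other hand $|z|^n\ge e^{-2n(1-|z|)}$ near the boundary, and optimizing over $1-|z|\asymp\sqrt{\varepsilon/n}$ gives $\delta_n:=\inf_{\D}\left(|z|^n+|f(z)|\right)\gtrsim e^{-c\sqrt{\varepsilon n}}$; then (H6) (after the usual normalization by $1+\norm{f}_\infty$) produces $u_n$ with $z^nu_n+f\psi_n\equiv 1$ and $\norm{u_n}_\A\lesssim\delta_n^{-A}\lesssim e^{cA\sqrt{\varepsilon n}}$, and letting $\varepsilon\to 0$ yields the $o(\sqrt n)$ claim. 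Without this computation --- or the paper's equivalent, which solves $(z-\lambda)G_\lambda+fF_\lambda\equiv 1$ and bounds $\delta_\lambda\ge\min\left(c_\varepsilon e^{-\varepsilon/(1-|\lambda|)},\tfrac{1-|\lambda|}{2}\right)$ for $|\lambda|<1$ --- the argument is incomplete, and as written it points the reader at the wrong hypotheses for its most delicate step.
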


The paper is organized as follows. In section \ref{context}, we introduce the general framework of our results and some standard preliminaries. Section \ref{cyclicity} contains the proofs of Theorem~\ref{theorem-1} and Corollary~\ref{corollaire-1}. In Section \ref{Section4}, we present the proof of Theorem~\ref{theorem-2}. Finally, in the last section, we present some concrete applications and show that our results enable us to recapture some recent results obtained for $\X=\mathcal H(b)$, the de Branges-Rovnyak space with a rational (not inner) function in the closed unit ball of $H^\infty$, for $\X=\mathcal D_\alpha^p$  the Besov--Dirichlet space with $p>1$ and $\alpha+1\leq p\leq \alpha+2$, and for $\X=\mathcal D(\mu)$, the weighted Dirichlet type space where $\mu$ is a finite positive Borel measure on the closed unit disc $\overline{\D}$. 
\\

We will use sometimes in the paper the notation $A \lesssim B$ meaning that there is an absolute positive constant $C$ such that $A \leq CB$.

\section{Presentation of the context and preliminaries}\label{context}

\subsection{Our general framework}\label{subsection-general-context}
We begin by presenting the assumptions that our Banach space $\X$ must satisfy. For $n\in\mathbb N$, we define $\chi_n(z)=z^n$, $z\in\D$. We assume that $\X$ is a Banach space of analytic functions on $\D$ satisfying the following standard conditions:
\begin{enumerate}
\item[(H1)] For every $\lambda\in\D$, the evaluation map  $E_\lambda : f \in \X \longmapsto f(\lambda) \in \C$ is continuous.
\item[(H2)] For every $f\in\X$, we have  $\chi_1 f \in \X$.
\item[(H3)] The set of polynomials $\mathcal P= \bigvee(\chi_n : n \ge 0)$ is dense in $\X$. 
\end{enumerate}
The assumption (H1) means that, for every $\lambda\in\D$, $E_\lambda$ belongs to $\X^*$, the dual space of $\X$. In particular, it implies that convergence in $\X$ implies pointwise convergence on $\D$. We shall denote by $\langle f|\varphi \rangle=\varphi(f)$ the duality bracket between $\varphi\in\X^*$ and $f\in\X$.

The assumption (H2) means that $\chi_1$ belongs to $\mathfrak{M}(\X)$, the algebra of multipliers of $\X$ defined by $$\mathfrak{M}(\X) := \left \{\varphi \in \X : \varphi f \in \X,\,\forall f \in \X \right \}.$$
Using closed graph theorem and (H1) it is easy to see that if $\varphi\in\mathfrak{M}(\X)$, then the multiplication operator by $\varphi$, $M_\varphi : f \in \X \longmapsto \varphi f \in \X$, is bounded on $\X$. In particular, it follows from (H2) that the shift operator acting on $\X$ defined by 
\[ 
S : f \in \X \longmapsto \chi_1 f \in \X
\] 
is bounded. Moreover, if we set $\norm{\varphi}_{\mathfrak{M}(\X)} = \norm{M_\varphi}_{\Lcont(\X)}$, where $\|\cdot\|_{\Lcont(\X)}$ denotes the (operator) norm on $\Lcont(\X)$, the space of linear and bounded operators on $\X$, then it is well-known that $\mathfrak{M}(\X)$ is a Banach algebra. Note also that by (H2), we have $\mathcal{P}\subset\mathfrak{M}(\X)$.\\

We introduce now a (commutative and unital) Banach algebra $\A$ satisfying the following conditions:
\begin{enumerate}
\item[(H4)] $\A \subset \mathfrak{M}(\X)$.
\item[(H5)] For every $\lambda \in \D$ the evaluation map $f \in \A \longmapsto f(\lambda) \in \C$ is continuous.
\item[(H6)] There exists $C>0$ and $A\geq 1$ such that for every $f_1,f_2\in\A$ satisfying 
\[
0 < \delta \le \abs{f_1} + \abs{f_2} \le 1\qquad \text{~on~} \D,
\]
there exists $g_1,g_2\in\A$ such that $f_1g_1+f_2g_2\equiv 1$ on $\D$  and 
\[
\|g_1\|_\A,\,\|g_2\|_\A\leq \frac{C}{\delta^A}.
\]
\end{enumerate}
  
The assumption (H6) means that the Banach algebra $\A$ satisfies a Corona Theorem with a control on the solutions. It is known to be true in the algebra $H^\infty$, with any $A>2$. See \cite{MR141789,MR629839,Uch}. V. Tolokonnikov \cite{Tolokonnikov} also proved that (H6) is satisfied for $\A=\mathcal D_\alpha^p\cap A(\D)$ with $1<p\leq \alpha+2$ and $A\geq 4$. Let us also mention  the recent paper \cite{MR4363747} of Shuaibing Luo who proved that (H6) holds for $\A=\mathfrak M(\mathcal D(\mu))$ with $\mu$  a finite positive Borel measure on the closed unit disc $\overline{\D}$ and $A\geq 4$, and the paper \cite{CoronaHbMult} who studied the case of the  algebra of multipliers of some de Branges--Rovnyak spaces.

\subsection{Some technical preliminaries}

We now present some simple consequences of our assumptions. Let us first remind a standard property for the multiplier algebra $\mathfrak{M}(\X)$. For completeness, we give a proof.
\begin{lem}\label{multpXHinftyX}
Let $\X$ be a Banach space of analytic functions on $\D$ satisfying (H1) to (H3). 
\begin{enumerate}
\item[$(1)$] We have $\mathfrak{M}(\X) \subset H^\infty \cap \X$ and there exists $c_1> 0$ such that for every $f \in \mathfrak{M}(\X)$, we have $$\norm{f}_\X + \norm{f}_\infty \le c_1 \norm{f}_{\mathfrak{M}(\X)}.$$
\item[$(2)$]  If we assume furthermore that $\mathfrak{M}(\X)=H^\infty \cap \X$, then there exists a constant $c_2>0$ such that for every $f\in \mathfrak{M}(\X)$, we have
\[
c_2\|f\|_{\mathfrak{M}(\X)}\leq \|f\|_\infty+\|f\|_\X.
\]
\end{enumerate}
\end{lem}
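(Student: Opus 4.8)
For part (1), the plan is to use the closed graph theorem together with (H1). Fix $f\in\mathfrak{M}(\X)$. First I would show $f\in\X$: since $1\in\mathcal P\subset\X$ by (H3) (the constant function, which is $\chi_0$), and $f=f\cdot 1=M_f(1)\in\X$, with $\norm{f}_\X=\norm{M_f(1)}_\X\leq\norm{M_f}_{\Lcont(\X)}\norm{1}_\X$. Next, to see $f\in H^\infty$ with control of $\norm{f}_\infty$, fix $\lambda\in\D$ and iterate: for every $n\geq 0$, $f^n=M_f^n(1)\in\X$, hence by (H1)
\[
\abs{f(\lambda)}^n=\abs{f^n(\lambda)}=\abs{\langle f^n|E_\lambda\rangle}\leq\norm{E_\lambda}_{\X^*}\norm{M_f}_{\Lcont(\X)}^n\norm{1}_\X.
\]
Taking $n$-th roots and letting $n\to\infty$ gives $\abs{f(\lambda)}\leq\norm{M_f}_{\Lcont(\X)}$ (the spectral radius bound), so $\norm{f}_\infty\leq\norm{f}_{\mathfrak{M}(\X)}$. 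Combining the two estimates yields the claim with $c_1=\norm{1}_\X+1$.

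For part (2), under the extra hypothesis $\mathfrak{M}(\X)=H^\infty\cap\X$, the plan is an open mapping / closed graph argument on the identity map. Equip $H^\infty\cap\X$ with the norm $\norm{f}_\infty+\norm{f}_\X$; I would first check this makes $H^\infty\cap\X$ a Banach space (completeness follows since both $H^\infty$ and $\X$ are complete and, by (H1) and the analogous property of $H^\infty$, convergence in either forces pointwise convergence, so a Cauchy sequence has a common analytic limit). By part (1), the identity map $(\mathfrak{M}(\X),\norm{\cdot}_{\mathfrak{M}(\X)})\to(H^\infty\cap\X,\norm{\cdot}_\infty+\norm{\cdot}_\X)$ is bounded, and it is a bijection by the hypothesis $\mathfrak{M}(\X)=H^\infty\cap\X$. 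Since both sides are Banach spaces, the bounded inverse theorem gives that the inverse is bounded, i.e. there is $c_2>0$ with $c_2\norm{f}_{\mathfrak{M}(\X)}\leq\norm{f}_\infty+\norm{f}_\X$ for all $f$.

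The routine calculations are the completeness check for $H^\infty\cap\X$ in part (2) and the verification that $E_\lambda$-continuity does what I claimed; the only genuine subtlety — and the step I'd expect to need the most care — is the spectral-radius estimate $\abs{f(\lambda)}\leq\norm{M_f}_{\Lcont(\X)}$ in part (1), which is what forces $f$ to be bounded (not merely in $\X$): one must note that $f^n\in\X$ for all $n$ because $\mathfrak{M}(\X)$ is an algebra and $1\in\X$, and then pass to the limit in the $n$-th root. Everything else is a direct application of (H1)–(H3) and the closed graph / open mapping theorems.
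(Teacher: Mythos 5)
Your proposal is correct, and its overall architecture matches the paper's: part (1) splits into the $\X$-norm bound via $f=f\chi_0$ and the sup-norm bound $\abs{f(\lambda)}\le\norm{M_f}_{\Lcont(\X)}$, and part (2) is the Banach isomorphism theorem applied to the identity map $(\mathfrak{M}(\X),\norm{\cdot}_{\mathfrak{M}(\X)})\to(H^\infty\cap\X,\norm{\cdot}_\infty+\norm{\cdot}_\X)$, exactly as in the paper. The one place where you diverge is the sup-norm bound in part (1): you obtain $\abs{f(\lambda)}\le\norm{M_f}_{\Lcont(\X)}$ by the spectral-radius trick, iterating $f^n=M_f^n(\chi_0)$, estimating $\abs{f(\lambda)}^n\le\norm{E_\lambda}_{\X^*}\norm{M_f}_{\Lcont(\X)}^n\norm{\chi_0}_\X$ and taking $n$-th roots (which works because $\norm{E_\lambda}_{\X^*}\norm{\chi_0}_\X>0$, as $\langle\chi_0|E_\lambda\rangle=1$ forces $E_\lambda\neq 0$), whereas the paper computes $M_f^*E_\lambda=\overline{f(\lambda)}E_\lambda$ and reads off $\abs{f(\lambda)}\norm{E_\lambda}_{\X^*}\le\norm{M_f}_{\Lcont(\X)}\norm{E_\lambda}_{\X^*}$ directly. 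Both are standard one-step arguments yielding the same constant; the adjoint identity is slightly more economical (no limit to take), while your version generalizes verbatim to settings where one prefers not to invoke the dual. Your opening remark that part (1) uses the closed graph theorem is slightly misleading — closed graph is only needed beforehand to know that $M_f$ is bounded, which the paper establishes in the preliminaries — but this does not affect the validity of the argument.
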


\begin{proof}
$(1)$: Let $f \in \mathfrak{M}(\X)$. Since $\chi_0 = 1 \in \X$, $f = f \chi_0 \in \X$, whence $$\norm{f}_\X = \norm{f \chi_0}_\X \le \norm{f}_{\mathfrak{M}(\X)} \norm{\chi_0}_\X.$$

 Note that $M_f^* : \X^* \rightarrow \X^*$ is well-defined and, from (H1), $E_\lambda \in \X^*$ for every $\lambda \in \D$. Then, for every $g \in \X$, we have $$\scal{g}{M_f^* E_\lambda} = \scal{fg}{E_\lambda} = f(\lambda)g(\lambda) = f(\lambda) \scal{g}{E_\lambda} = \scal{g}{\overline{f(\lambda)}E_\lambda}$$ and we deduce that $M_f^*E_\lambda = \overline{f(\lambda)}E_\lambda$. Therefore $$\abs{f(\lambda)}\norm{E_\lambda}_{\X^*} \le \norm{M_f}_{\Lcont(\X)} \norm{E_\lambda}_{\X^*}.$$
Remark that $\scal{\chi_0}{E_\lambda} = 1$, hence $E_\lambda \neq 0$ and dividing the last inequality by $\|E_\lambda\|_{\X^*}$ gives 
    \[
    \abs{f(\lambda)} \le \norm{M_f}_{\Lcont(\X)}.
    \] 
    
In other words, $f$ is bounded on $\D$ and $\norm{f}_\infty \le \norm{f}_{\mathfrak{M}(\X)}$. Moreover, since $f \in \X \subset \Hol(\D)$, $f \in H^\infty$ and 
    \begin{equation*}
        \norm{f}_\X + \norm{f}_\infty \le \left (\norm{\chi_0}_\X + 1 \right )\norm{f}_{\mathfrak{M}(\X)}.
    \end{equation*} 

$(2)$: Equip $H^\infty\cap\X$ with the following norm
\[
\|f\|_{\infty,\X}=\|f\|_\infty+\|f\|_\X,\qquad f\in H^\infty\cap\X.
\]    
It is not difficult to see that $(H^\infty\cap\X,\|\cdot\|_{\infty,\X})$ is a Banach space. Consider now the identity map
\[
\begin{array}{rccl}
i:&\mathfrak{M}(\X)&\longrightarrow & H^\infty\cap\X \\
&f&\longmapsto & f
\end{array}.
\]    
 According to $(1)$, this map is continuous and the theorem of isomorphism of Banach gives the result.  \qedhere
\end{proof}

According to the closed graph theorem applied to the canonical injection $i : f \in \A \longmapsto f \in \mathfrak{M}(\X)$, the assumption (H5) and Lemma \ref{multpXHinftyX}, it is easy to see that there exists $c_3>0$ such that for every $f \in \A$, we have 
\begin{equation}\label{eq:Mx-et-A}
\norm{f}_{\mathfrak{M}(\X)} \le c_3 \norm{f}_\A,
\end{equation}
and then there exists $c_4>0$ such that for every $f \in \A$, we have 
\begin{equation}\label{eq:X-et-A}
\norm{f}_{\X} \le c_4 \norm{f}_\A.
\end{equation}

The purpose of this paper is to study the cyclicity of $S$ on $\X$. Let us remind the following notation 
\[
[f]_\X=\overline{\{pf:p\in\mathcal P\}}^\X, \quad\mbox{for }f\in\X,
\]
where $\overline{\{\cdots\}}^\X$ denotes the closure of $\{\cdots\}$ in $\X$. It is clear that $f$ is a cyclic vector of $S$ in $\X$ if and only if 
\[
[f]_\X=\X.
\]
\begin{rem}\label{remark-2.2-terefd}
It is standard that $f$ is cyclic for $S$ in $\X$ if and only if $1 \in \left [f \right]_\X$. Indeed, since $\chi_0 = 1 \in \X$ by (H3), if $f$ is cyclic for $S$ in $\X$, then $1 \in [f]_\X$. Conversely, let $g \in \X$ and $\varepsilon > 0$. From the assumption (H3), there exists $q \in \mathcal{P}$ such that $\norm{q - g}_\X \le \frac{\varepsilon}{2}$. Moreover, if $1 \in [f]_\X$, then there exists $p \in \mathcal{P}$ such that $\norm{p f - 1}_\X \le \frac{\varepsilon}{2\norm{q}_{\mathfrak{M}(\X)}}$. Thus $pq \in \mathcal{P}$ and 
\[
\norm{pq f - g}_\X \le \norm{pqf - q}_\X + \norm{q - g}_\X \le \norm{q}_{\mathfrak{M}(\X)} \norm{pf - 1}_\X + \frac{\varepsilon}{2} \le \varepsilon,
\]
which gives the cyclicity of $f$.
\end{rem}
\begin{rem}
It also follows from (H1) and Remark~\ref{remark-2.2-terefd} that if $f$ is cyclic for $S$ in $\X$, then for every $\lambda\in\D$, $f(\lambda)\neq 0$. Indeed, there should exists a sequence of polynomials $(p_n)_n$ such that $p_nf\to 1$ as $n\to\infty$. According to (H1), for every $\lambda\in\D$, we thus have $p_n(\lambda)f(\lambda)\to 1$ as $n\to\infty$. This forces $f(\lambda)$ to  be non zero. 
\end{rem}
For $f \in \A$, we denote by $\I_f$ the closed ideal of $\A$ generated by $f$, that is 
\begin{equation}\label{def-ideal}
\I_f := \overline{\left \{ gf : g \in \A \right \}}^{\A}.\
\end{equation}

\begin{lem}\label{IdealfIncluCyclf}
    Let $f \in \A$. Then $\I_f \subset [f]_\X$.
\end{lem}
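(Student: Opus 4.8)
The statement is soft: the whole point is that $\I_f$ is defined as a closure in the \emph{smaller} norm $\|\cdot\|_\A$, which by \eqref{eq:X-et-A} controls $\|\cdot\|_\X$, so the $\A$-closure of a set sits inside its $\X$-closure. Concretely I would proceed in two steps: first check that every generator $gf$ of $\I_f$ (with $g\in\A$) already belongs to $[f]_\X$, then pass from the generating set to $\I_f$ by comparing the two topologies.

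\textbf{Step 1: the generators lie in $[f]_\X$.} Fix $g\in\A$. By (H4) and Lemma~\ref{multpXHinftyX}$(1)$ we have $g\in\mathfrak M(\X)\subset H^\infty\cap\X$, so in particular $g\in\X$. Also $f\in\A\subset\mathfrak M(\X)$ by (H4), hence the multiplication operator $M_f$ is bounded on $\X$ (by (H1) and the closed graph theorem, as recalled in the text). Now use (H3): pick polynomials $p_n\in\mathcal P$ with $p_n\to g$ in $\X$. Applying the bounded operator $M_f$ gives $p_n f = M_f p_n \to M_f g = gf$ in $\X$. Since each $p_n f\in\{pf:p\in\mathcal P\}$, this shows $gf\in\overline{\{pf:p\in\mathcal P\}}^\X=[f]_\X$. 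Thus $\{gf:g\in\A\}\subset[f]_\X$.

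\textbf{Step 2: pass to the closure.} Let $h\in\I_f$. By definition \eqref{def-ideal} there are $g_n\in\A$ with $g_n f\to h$ in the $\A$-norm. By \eqref{eq:X-et-A}, $\|g_n f-h\|_\X\le c_4\|g_n f-h\|_\A\to 0$, so $g_n f\to h$ in $\X$ as well. By Step 1 each $g_n f\in[f]_\X$, and $[f]_\X$ is closed in $\X$ by its very definition, so the $\X$-limit $h$ belongs to $[f]_\X$. Hence $\I_f\subset[f]_\X$.

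\textbf{Main obstacle.} There is essentially none of substance; the only point requiring care is the change of topology in Step~2, and that is exactly what the continuous inclusion $\A\hookrightarrow\X$ from \eqref{eq:X-et-A} (itself a consequence of (H5) and Lemma~\ref{multpXHinftyX} via the closed graph theorem) is for. One should also not forget to invoke Lemma~\ref{multpXHinftyX}$(1)$ to know that elements of $\A$ genuinely live in $\X$, so that $gf$ makes sense as an element of $\X$ and $M_f$ can be applied.
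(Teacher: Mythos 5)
Your proof is correct and follows essentially the same two-step argument as the paper: first approximating $g\in\A$ by polynomials in $\X$ and applying the bounded operator $M_f$ to place each generator $gf$ in $[f]_\X$, then transferring $\A$-convergence to $\X$-convergence via \eqref{eq:X-et-A} to handle the closure. No discrepancies to report.
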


\begin{proof}
    We show first that for every $g \in \A$, we have $gf \in [f]_{\X}$. It follows from (H4) and Lemma~\ref{multpXHinftyX}  that  $g\in \X$. Now, according to (H3), there exists a sequence of polynomials $(p_n)_{n \ge 0}$ such that $\norm{p_n - g}_\X \to 0$ as $n\to\infty$. Since $f\in\A\subset\mathfrak{M}(\X)$, we obtain that $\norm{p_n f - gf}_\X\to 0$ as $n\to\infty$, and thus $gf \in [f]_\X$.
    
    Consider now $\varphi \in \I_f$. By definition, there exists a sequence $(\varphi_n)_{n \ge 0}$ of $\A$ such that $\norm{\varphi_n f - \varphi}_\A \to 0$ as $n\to\infty$. Then it follows from \eqref{eq:X-et-A} that   $\|\varphi_n f-\varphi\|_\X\to 0$ as $n\to \infty$. Finally, since $\varphi_n f$ belongs to $[f]_\X$, that is closed in $\X$, we conclude that  $\varphi \in [f]_\X$ and thus $\I_f \subset [f]_\X$.
\end{proof}
 
\begin{lem}\label{cyclProdXA}
    Let $f \in \X$ and $\varphi \in \A$ be two cyclic vectors for $S$ in $\X$. Then $f \varphi$ is cyclic for $S$ in $\X$. 
\end{lem}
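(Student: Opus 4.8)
The plan is to reduce the cyclicity of $f\varphi$ to the cyclicity of $\varphi$, which is assumed, by establishing the inclusion $[\varphi]_\X \subset [f\varphi]_\X$. First I would record the elementary point that, since $\varphi \in \A \subset \mathfrak{M}(\X)$ by (H4), the product $f\varphi$ belongs to $\X$ and the multiplication operator $M_\varphi$ is bounded on $\X$; this is what will allow us to transport approximations.

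Next I would exploit the cyclicity of $f$. By the standard reduction of Remark~\ref{remark-2.2-terefd}, $1 \in [f]_\X$, so there is a sequence of polynomials $(p_n)_{n\ge 0}$ with $\norm{p_n f - 1}_\X \to 0$. Applying the bounded operator $M_\varphi$ yields $\norm{p_n(f\varphi) - \varphi}_\X = \norm{M_\varphi(p_n f - 1)}_\X \to 0$. Since each $p_n(f\varphi)$ lies in $[f\varphi]_\X$ and this subspace is closed, we obtain $\varphi \in [f\varphi]_\X$.

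Finally I would observe that $[f\varphi]_\X$ is, by construction, an $S$-invariant closed subspace of $\X$; because $S = M_{\chi_1}$ is bounded, $[f\varphi]_\X$ is stable under multiplication by $\chi_1$, hence by every polynomial. Consequently $\varphi \in [f\varphi]_\X$ forces $p\varphi \in [f\varphi]_\X$ for all $p \in \mathcal{P}$, and passing to the closure gives $[\varphi]_\X \subset [f\varphi]_\X$. Since $\varphi$ is cyclic, $[\varphi]_\X = \X$, so $[f\varphi]_\X = \X$, i.e. $f\varphi$ is cyclic for $S$ in $\X$.

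I do not anticipate a genuine obstacle in this lemma; the only points requiring a modicum of care are that the approximation $p_n f \to 1$ may be pushed through $M_\varphi$ (licit precisely because $\varphi \in \mathfrak{M}(\X)$, so that one lands back in $\X$), and that an $S$-invariant closed subspace is automatically a module over $\mathcal{P}$, which is what turns $\varphi \in [f\varphi]_\X$ into $[\varphi]_\X \subset [f\varphi]_\X$.
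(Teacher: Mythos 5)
Your proof is correct and follows essentially the same route as the paper: both arguments push the approximation $q f \to 1$ through the bounded multiplier $M_\varphi$ to land $\varphi$ in $[f\varphi]_\X$, and then invoke the cyclicity of $\varphi$ to conclude. The paper merely packages this as a single quantitative $\varepsilon/2$ estimate using the multiplier norm of $p\varphi$, whereas you phrase it via the inclusion $[\varphi]_\X \subset [f\varphi]_\X$; the content is the same.
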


\begin{proof}
    Let $\varepsilon > 0$. Since $\varphi$ is cyclic for $S$ in $\X$, there exists $p \in \mathcal{P}$ such that $$\norm{p\varphi - 1}_\X \le \frac{\varepsilon}{2}.$$ Moreover, from (H2) and (H4), $\varphi \in \A \subset \mathfrak{M}(\X)$ and $p \in \mathcal{P} \subset \mathfrak{M}(\X) $, whence $p\varphi \in \mathfrak{M}(\X)$. Now, since $f$ is also cyclic for $S$ in $\X$, there exists $q \in \mathcal{P}$ such that $$\norm{q f - 1}_\X \le \frac{\varepsilon}{2 \norm{p \varphi}_{\mathfrak{M}(\X)}}.$$ 
   Thus
    \begin{align*}
        \norm{f \varphi pq - 1}_\X & \le \norm{f \varphi pq - p\varphi}_\X + \norm{p\varphi - 1}_\X \\ & \le \norm{\varphi p(q f - 1)}_\X + \frac{\varepsilon}{2} \\
       & \le \norm{p \varphi}_{\mathfrak{M}(\X)}\norm{qf - 1}_\X + \frac{\varepsilon}{2} \le \varepsilon, 
    \end{align*}
which gives the cyclicity of $f\varphi$ for $S$ in $\mathcal{X}$.
\end{proof}

\begin{coro}\label{puissCycl}
    Let $f \in \A$ and assume that $f$ is cyclic for $S$ in $\X$. Then, for every $N \in \N^*$, $f^N$ is cyclic for $S$ in $\X$. 
\end{coro}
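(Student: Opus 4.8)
The plan is to deduce this immediately from Lemma~\ref{cyclProdXA} by an induction on $N$. First I would record that since $f\in\A$, assumptions (H4) together with Lemma~\ref{multpXHinftyX} give $f\in\mathfrak{M}(\X)\subset\X$, so that $f$ is a legitimate element of $\X$; more generally, as $\A$ is an algebra, $f^N\in\A\subset\X$ for every $N\in\N^*$. The base case $N=1$ is precisely the hypothesis that $f$ is cyclic for $S$ in $\X$.

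For the inductive step, I would assume that $f^N$ is cyclic for $S$ in $\X$ for some $N\in\N^*$ and apply Lemma~\ref{cyclProdXA} to the pair consisting of the function $f^N\in\X$ (cyclic for $S$ in $\X$ by the induction hypothesis) and the function $\varphi=f\in\A$ (cyclic for $S$ in $\X$ by assumption). The lemma then yields that the product $f^N\cdot f=f^{N+1}$ is cyclic for $S$ in $\X$, which closes the induction and proves the corollary.

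There is no real obstacle here: the only point to check is that the two functions fed into Lemma~\ref{cyclProdXA} satisfy its hypotheses, namely that $f^N$ belongs to $\X$ (guaranteed by $\A\subset\mathfrak{M}(\X)\subset\X$) and that $f$ belongs to $\A$ (which is the standing assumption). Thus the statement is a routine consequence of the preceding lemma, and the write-up will consist of little more than setting up the induction and citing Lemma~\ref{cyclProdXA}.
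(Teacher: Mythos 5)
Your proof is correct and follows exactly the same route as the paper: induction on $N$, with the inductive step given by applying Lemma~\ref{cyclProdXA} to the pair $(f^N, f)$. The extra verification that $f^N\in\X$ via $\A\subset\mathfrak{M}(\X)\subset\X$ is a harmless (and accurate) addition that the paper leaves implicit.
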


\begin{proof}
     We proceed by induction. Suppose that $f^N$ is cyclic for $S$ in $\X$ for some $N \in \N^*$. Then, from Lemma \ref{cyclProdXA}, we get that $f^{N+1}$ is cyclic for $S$ in $\X$.
\end{proof}
%
%
%
\subsection{Some operator tools}\label{subsection-operator-tools}
For Theorem~\ref{theorem-2}, we introduce now two more assumptions that we shall need in order to use a tauberian result of Atzmon. Apart assumptions (H1),(H2),...,(H6), we also assume that: 
\begin{enumerate}
\item[(H7)] The function $\chi_1\in\A$.
\item[(H8)] There exists $C>0$ and $p\in\mathbb N$ such that for every $n\geq 0$, $\|\chi_n\|_\A\leq C n^p$.
\end{enumerate}
In the following, $\sigma(\chi_1)$ denotes the spectrum of $\chi_1$ in the Banach algebra $\A$.
\begin{lem}\label{lem:spectreS}
We have
\begin{enumerate}
\item[(i)] $\sigma(\chi_1)=\overline{\D}$. 
\item[(ii)] $\Hol(\overline{\D})\subset\A$.
\item[(iii)] For every $|\lambda|>1$, we have
\[
\|(z-\lambda)^{-1}\|_{\A}\lesssim \frac{|\lambda|^{p+1}}{(|\lambda|-1)^{p+1}}.
\]
\end{enumerate}
\end{lem}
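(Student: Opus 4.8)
The plan is to exploit assumption (H8): since $\|\chi_n\|_\A$ grows only polynomially, any power series with geometrically decaying Taylor coefficients converges in $\A$, and this lets us compute resolvents of $\chi_1$ explicitly. First I would record the elementary facts about $\A$ on which all three parts rest. By (H4) and Lemma~\ref{multpXHinftyX} we have $\A\subset\mathfrak{M}(\X)\subset H^\infty$, so elements of $\A$ are bounded analytic functions on $\D$; the product in $\A$ is the pointwise product (apply $M_{fg}=M_fM_g$ to $\chi_0$); $\chi_0$ is the unit of $\A$ and $\chi_n=\chi_1^n\in\A$ for all $n$ by (H7); and for $\lambda\in\D$ the evaluation $E_\lambda\colon f\mapsto f(\lambda)$ is a character of $\A$, continuous by (H5), with $E_\lambda(\chi_n)=\lambda^n$. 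Also, by (H8), $\|\chi_n\|_\A\lesssim(1+n)^p$ for all $n\ge 0$.

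For $(iii)$ and the inclusion $\sigma(\chi_1)\subset\overline{\D}$, I would fix $|\lambda|>1$ and set $x=1/|\lambda|\in(0,1)$. Then $R_\lambda:=-\sum_{n\ge0}\lambda^{-n-1}\chi_n$ converges absolutely in $\A$, and a telescoping computation gives $(\chi_1-\lambda\chi_0)R_\lambda=\chi_0$; hence $z-\lambda$ is invertible in $\A$, so $\lambda\notin\sigma(\chi_1)$, and
\[
\|(z-\lambda)^{-1}\|_\A=\|R_\lambda\|_\A\lesssim\sum_{n\ge0}(1+n)^p\,x^{n+1}\lesssim\frac{x}{(1-x)^{p+1}}=\frac{|\lambda|^{p}}{(|\lambda|-1)^{p+1}}\le\frac{|\lambda|^{p+1}}{(|\lambda|-1)^{p+1}},
\]
where I use the elementary bound $\sum_{n\ge0}(1+n)^p x^n\lesssim(1-x)^{-(p+1)}$ (e.g.\ via $(1+n)^p\lesssim\binom{n+p}{p}$ and $\sum_{n\ge0}\binom{n+p}{p}x^n=(1-x)^{-(p+1)}$) together with $|\lambda|>1$. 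Conversely, for $\lambda\in\D$ we have $E_\lambda(z-\lambda)=\lambda-\lambda=0$, so $z-\lambda$ lies in the kernel of a character and is therefore not invertible, i.e.\ $\lambda\in\sigma(\chi_1)$. Thus $\D\subset\sigma(\chi_1)\subset\overline{\D}$, and since $\sigma(\chi_1)$ is closed this forces $\sigma(\chi_1)=\overline{\D}$, proving $(i)$ and $(iii)$.

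For $(ii)$, interpreting $\Hol(\overline{\D})$ as functions holomorphic on a neighbourhood of $\overline\D$, I would take $f\in\Hol(\overline{\D})$, so $f$ extends holomorphically to $\{|z|<R\}$ for some $R>1$ and $f=\sum_{n\ge0}a_n\chi_n$ with $\limsup_n|a_n|^{1/n}\le1/R$; hence $|a_n|\le C_f r^n$ for some $r\in(1/R,1)$. By (H8), $\sum_{n\ge0}\|a_n\chi_n\|_\A\lesssim C_f\sum_{n\ge0}(1+n)^p r^n<\infty$, so $g:=\sum_{n\ge0}a_n\chi_n$ converges in $\A$; evaluating with (H5) gives $g(\lambda)=\sum_n a_n\lambda^n=f(\lambda)$ for every $\lambda\in\D$, so $g=f$ and $f\in\A$. (Equivalently, since $\Hol(\overline{\D})=\Hol(\sigma(\chi_1))$ by $(i)$, the Riesz--Dunford functional calculus produces $f(\chi_1)\in\A$, and $E_\lambda(f(\chi_1))=f(E_\lambda(\chi_1))=f(\lambda)$ identifies it with $f$.)

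I do not expect a genuine obstacle here; everything reduces to the polynomial growth in (H8). The only mildly technical points are the elementary generating-function estimate $\sum_{n\ge0}(1+n)^p x^n\lesssim(1-x)^{-(p+1)}$ needed for the precise form of $(iii)$, and the (routine) verification that evaluations at points of $\D$ are honest characters of $\A$, which is what makes the lower bound $\D\subset\sigma(\chi_1)$ in $(i)$ work.
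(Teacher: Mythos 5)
Your proposal is correct and follows essentially the same route as the paper: (H8) plus the Neumann-type series $-\sum_n\lambda^{-n-1}\chi_n$ for the resolvent, the elementary estimate $\sum_n n^p x^n\lesssim(1-x)^{-(p+1)}$ for part $(iii)$, geometric decay of Taylor coefficients for part $(ii)$, and evaluation at $\lambda\in\D$ for the inclusion $\D\subset\sigma(\chi_1)$. The only cosmetic difference is that you obtain $\sigma(\chi_1)\subset\overline{\D}$ directly from the explicit inverse rather than from the spectral radius formula $r(\chi_1)=\lim\|\chi_1^n\|_\A^{1/n}\le 1$ as the paper does.
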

\begin{proof}
$(i)$: According to (H8), we have 
\begin{equation}\label{eq:consequence-immediate-H8-shift}
\|\chi_1^n\|_\A=\|\chi_n\|_\A\leq C n^p.
\end{equation}
It immediately implies that the spectral radius of $\chi_1$ satisfies $r(\chi_1)\leq 1$, and then $\sigma(\chi_1)\subset\overline{\D}$. On the other hand, let $\lambda\in\D$ and assume that $\lambda-\chi_1$ is invertible in $\A$. It means that there exists $f\in\A$ such that $(\lambda-\chi_1)f=1$. Evaluating at $\lambda$ gives a contradiction. Hence $\D\subset\sigma(\chi_1)$ and we conclude by closeness of $\sigma(\chi_1)$. 

$(ii)$: Let $f\in \Hol(\overline{\D})$ and write 
\[
f(z)=\sum_{n=0}^\infty a_n z^n,\qquad z\in\overline{\D}.
\]
It is well known that there exists a constant $c>0$ such that  $a_n=O(\exp(-cn))$ as $n\to \infty$. See for instance \cite[Theorem 5.7]{DBR1}. Thus according to (H8), we have
\[
|a_n|\|\chi_n\|_\A\lesssim n^p \exp(-cn).
\]
Thus $g=\displaystyle\sum_{n=0}^\infty a_n\chi_n$ defines a function in $\A$. Now using that convergence in $\A$ implies pointwise convergence, we see that $f=g$, whence $f\in\A$. 

$(iii)$: Observe that for $|\lambda|>1$, the function $z\longmapsto \frac{1}{z-\lambda}$ is in $\Hol(\overline{\D})$, and we have 
\[
\frac{1}{z-\lambda}=-\sum_{n=0}^\infty \frac{z^n}{\lambda^{n+1}}.
\]
Thus, using (H8), 
\[
\begin{aligned}
\|(z-\lambda)^{-1}\|_{\A} &\leq \sum_{n=0}^\infty \frac{\|\chi_n\|_\A}{|\lambda|^{n+1}}\\
&\leq C \sum_{n=0}^\infty \frac{n^p}{|\lambda|^n}.
\end{aligned}
\]
It is easy to check that for every $0<x<1$ and every $p\in\mathbb N$, we have $\displaystyle\sum_{n=0}^\infty n^px^n\leq \frac{p!}{(1-x)^{p+1}}$, whence
\[
\|(z-\lambda)^{-1}\|_{\A} \lesssim \frac{p!}{\left(1-\frac{1}{|\lambda|}\right)^{p+1}}=\frac{p!|\lambda|^{p+1}}{(|\lambda|-1)^{p+1}},
\]
which concludes the proof.
\end{proof}

\begin{rem}
Using similar arguments as in the proof of Lemma~\ref{lem:spectreS} $(i)$, we can prove that in our context when $\X$ and $\A$ satisfy (H1) to (H8), then $\sigma(S)=\overline{\D}$. 
\end{rem}

We now recall the following famous result of Atzmon \cite{Atzmon}.
\begin{theo}[Atzmon]\label{ThmAtzmon}
    Let $E$ be a Banach space and let  $T \in \Lcont(E)$ whose spectrum $\sigma(T) = \{ \zeta_0\}$ for some $\zeta_0\in\T$. Suppose that there exists $k \ge 0$ such that 
    \begin{equation}\label{eq:Atzmon-hypothesis}
    \norm{T^n} \underset{n \rightarrow + \infty}{=} O(n^k) \text{~and~} \log \norm{T^{-n}}  \underset{n \rightarrow + \infty}{=} o(\sqrt{n}).
    \end{equation} 
    Then $(T - \zeta_0 I)^k = 0$.
\end{theo}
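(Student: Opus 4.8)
Since this is the classical theorem of A.\ Atzmon \cite{Atzmon}, which the present paper only invokes, I will merely outline the argument one would give. First I would normalize: replacing $T$ by $\overline{\zeta_0}T$ leaves $\norm{T^n}$ and $\norm{T^{-n}}$ unchanged, rotates $\sigma(T)$ to $\{1\}$, and turns the desired conclusion into $(T-I)^k=0$, so we may assume $\zeta_0=1$. Set $Q=T-I$. Then $\sigma(Q)=\{0\}$, i.e.\ $Q$ is quasinilpotent, $\norm{Q^j}^{1/j}\to 0$; hence it suffices to show that $\langle Q^jx,x^*\rangle=0$ for every $j\geq k$, every $x\in E$ and every $x^*\in E^*$. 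Fixing such $x,x^*$, I would introduce the scalar holomorphic function $F(z)=\langle (zI-T)^{-1}x,x^*\rangle$ on $\C\setminus\{1\}$, which vanishes at $\infty$; quasinilpotence of $Q$ gives the norm-convergent expansion $(zI-T)^{-1}=\sum_{j\geq 0}(z-1)^{-j-1}Q^j$ for every $z\neq 1$, whence
\[
F(z)=\sum_{j\geq 0}\frac{\langle Q^jx,x^*\rangle}{(z-1)^{j+1}},\qquad z\neq 1.
\]
Thus everything reduces to proving that $F$ is a rational function whose only pole is at $z=1$ and has order at most $k$.

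Next I would record the two one-sided growth estimates for the resolvent that encode the hypotheses. For $|z|>1$, expanding $(zI-T)^{-1}=\sum_{n\geq 0}z^{-n-1}T^n$ and using $\norm{T^n}=O(n^k)$ shows that $\norm{(zI-T)^{-1}}$, hence $|F(z)|$, blows up only polynomially, with exponent governed by $k$, as $|z|\downarrow 1$. For $|z|<1$ the operator $T$ is invertible (since $0\notin\sigma(T)$), so $(zI-T)^{-1}=-\sum_{n\geq 0}z^{n}T^{-n-1}$; writing $\norm{T^{-n}}\leq e^{\varepsilon_n\sqrt n}$ with $\varepsilon_n\to 0$ and maximizing the terms $r^n e^{\varepsilon_n\sqrt n}$ over $n$ yields $\log\norm{(zI-T)^{-1}}=o\bigl((1-|z|)^{-1}\bigr)$ as $|z|\uparrow 1$ --- and the exponent $\tfrac12$ in the hypothesis is exactly what makes this work, a genuine rate $c\sqrt n$ giving only $O\bigl((1-|z|)^{-1}\bigr)$. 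Away from $z=1$ on $\T$, $F$ is holomorphic, hence locally bounded. Note that these estimates alone do not even exclude an essential singularity of $F$ at $z=1$: from inside the disc we only control $\log|F|$, and only sub-exponentially.

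The last --- and hardest --- step is to upgrade these asymmetric bounds into the statement that $F$ is rational with a single pole of order $\leq k$ at $z=1$. I would map $\D$ conformally onto a half-plane by a M\"obius transformation sending $1$ to $\infty$; since $F$ is holomorphic on $\C\setminus\{1\}$ and vanishes at $\infty$, it becomes an entire function $\widetilde F$ of the half-plane variable. The outside estimate becomes a polynomial bound for $\widetilde F$ on one half-plane; the inside estimate, because the M\"obius map roughly squares the distance to the boundary --- which is precisely why the threshold $\log\norm{T^{-n}}=o(\sqrt n)$ appears --- becomes a bound of exponential type zero along every ray in the other half-plane (with a controlled blow-up toward the separating line); and $\widetilde F$ is controlled on the separating line except near the point at infinity, which corresponds to $z\to 1$ along $\T$. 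A Phragm\'en--Lindel\"of argument in each half-plane, of the Levinson (``$\log\log$'') type needed to handle both the degeneration of these bounds near the boundary line and the behaviour at infinity along it, then forces $\widetilde F$ to be a polynomial, whose degree equals the order of the pole of $F$ at $z=1$ and is controlled by $k$. Reading this back through the Laurent expansion gives $\langle Q^jx,x^*\rangle=0$ for $j\geq k$; as $x$ and $x^*$ were arbitrary, $(T-\zeta_0 I)^k=0$. The genuine difficulty is concentrated in this Phragm\'en--Lindel\"of / quasianalyticity step: the growth hypotheses are placed right at the borderline where such a meromorphic continuation across $\T$ is still valid, and the precise bookkeeping of exponents --- to land on the sharp power of $T-\zeta_0 I$ --- is the delicate point.
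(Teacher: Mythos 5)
The paper does not prove this statement: it is quoted from Atzmon's work and used as a black box, so there is no in-paper argument to compare yours with; I can only assess your outline on its own terms. The skeleton you describe is indeed the standard route to such results, and the parts you carry out are correct: the rotation to $\zeta_0=1$; the fact that $Q=T-I$ is quasinilpotent, so that $(zI-T)^{-1}=\sum_{j\ge 0}(z-1)^{-j-1}Q^j$ converges for every $z\neq 1$ and the problem reduces to bounding the order of the pole of $F(z)=\langle (zI-T)^{-1}x,x^*\rangle$ at $z=1$; and the two resolvent estimates, namely $\norm{(zI-T)^{-1}}\lesssim (\abs{z}-1)^{-(k+1)}$ for $\abs{z}>1$ and $\log\norm{(zI-T)^{-1}}=o\bigl((1-\abs{z})^{-1}\bigr)$ for $\abs{z}<1$, the latter by exactly the Legendre-transform computation you indicate (this is where the threshold $o(\sqrt n)$ is used; your aside that the M\"obius map ``squares the distance to the boundary'' is not quite the right way to say this, but the underlying point is sound). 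However, the entire content of the theorem sits in the final step --- upgrading these asymmetric, boundary-degenerate bounds to the statement that $F$ is rational with its only pole at $1$ --- and you only name the tool (a Levinson-type $\log\log$ / Phragm\'en--Lindel\"of theorem) without carrying out the argument. As written this is an accurate roadmap, not a proof.

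There is also a concrete exponent problem. You set out to prove $\langle Q^jx,x^*\rangle=0$ for all $j\ge k$, i.e.\ a pole of order at most $k$; but your outside estimate is $(\abs{z}-1)^{-(k+1)}$, so the Phragm\'en--Lindel\"of step can at best yield a pole of order $k+1$, i.e.\ $(T-\zeta_0I)^{k+1}=0$. This is not a flaw in your approach but in the statement as printed, which is false as it stands: take $E=\C^2$ and $T=I+N$ with $N^2=0$, $N\neq 0$. Then $\sigma(T)=\{1\}$, $T^{\pm n}=I\pm nN$, so $\norm{T^n}=O(n)$ and $\log\norm{T^{-n}}=O(\log n)=o(\sqrt n)$; the hypotheses \eqref{eq:Atzmon-hypothesis} hold with $k=1$, yet $(T-I)^1=N\neq 0$ (similarly $T=I$ with $k=0$). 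The correct conclusion of Atzmon's theorem is $(T-\zeta_0I)^{k+1}=0$, which is exactly what your sketch naturally delivers and which is all that is needed downstream in Section 4, where only \emph{some} finite power of $z-\zeta_0$ in $\I_f$ is required. So the ``delicate bookkeeping of exponents'' you defer should be resolved in favour of $k+1$, not $k$.
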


Using Cauchy's inequalities, we can obtain the following version which translates growth assumptions \eqref{eq:Atzmon-hypothesis} on the iterates of $T$ into growth assumptions on the resolvant of $T$. It appears in \cite{TheseVectCycl} and we refer to it for the proof of this corollary.

\begin{coro}\label{coroThmAtzmon}
 Let $E$ be a Banach space and let  $T \in \Lcont(E)$ whose spectrum $\sigma(T) = \{ \zeta_0\}$ for some $\zeta_0\in\T$. Suppose that there exists $k \ge 0$ and $c > 0$ such that
 \[
 \norm{(T-\lambda I)^{-1}} \le \frac{c \abs{\lambda}^k}{(\abs{\lambda} - 1)^k}, \quad \abs{\lambda} > 1,
 \]
 and for all $\varepsilon > 0$, there exists $K_\varepsilon > 0$ such that 
 \[
 \norm{(T-\lambda I)^{-1}} \le K_\varepsilon \exp\left(\frac{\varepsilon}{1-\abs{\lambda}}\right), \quad \abs{\lambda} < 1.
 \] 
Then $(T-\zeta_0I)^k = 0$.
\end{coro}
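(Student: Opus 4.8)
The strategy is to reduce the resolvent growth hypotheses back to the iterate growth hypotheses \eqref{eq:Atzmon-hypothesis} of Atzmon's theorem (Theorem \ref{ThmAtzmon}) and then invoke that theorem directly. Since $\sigma(T)=\{\zeta_0\}$ with $|\zeta_0|=1$, the resolvent $(T-\lambda I)^{-1}$ is holomorphic on $\C\setminus\{\zeta_0\}$, and in particular on both $\{|\lambda|>1\}$ and $\{|\lambda|<1\}$. The two displayed bounds control its size near the unit circle from outside and from inside, respectively.

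First I would recover the estimate on $\|T^n\|$ for $n\ge 0$. The standard Cauchy/Neumann series representation gives, for $|\lambda|>1$,
\[
T^n=\frac{1}{2\pi i}\int_{|\lambda|=r}\lambda^n (\lambda I-T)^{-1}\,d\lambda,\qquad r>1,
\]
so that $\|T^n\|\le r^{n+1}\sup_{|\lambda|=r}\|(T-\lambda I)^{-1}\|\le c\,r^{n+1}\,r^k/(r-1)^k$. Optimizing over $r>1$ by the usual choice $r=1+\tfrac1n$ yields $\|T^n\|=O(n^{k})$ (up to a constant depending on $c,k$), which is the first half of \eqref{eq:Atzmon-hypothesis}. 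Second, I would recover the bound on $\log\|T^{-n}\|$. Here one uses that $T$ is invertible (since $0\notin\sigma(T)$) and writes, for $|\lambda|<1$ and a circle of radius $r<1$,
\[
T^{-n}=\frac{1}{2\pi i}\int_{|\lambda|=r}\lambda^{-n-1}(\lambda I-T)^{-1}\,d\lambda\,,
\]
whose norm is at most $r^{-n}\,K_\varepsilon\exp\!\big(\varepsilon/(1-r)\big)$ for every $\varepsilon>0$. Choosing $r=1-1/\sqrt n$ gives $\|T^{-n}\|\le K_\varepsilon\,(1-1/\sqrt n)^{-n}\exp(\varepsilon\sqrt n)$, and since $(1-1/\sqrt n)^{-n}=\exp(\sqrt n+o(\sqrt n))$, we get $\log\|T^{-n}\|\le (1+\varepsilon)\sqrt n+o(\sqrt n)$ for every $\varepsilon>0$, i.e. $\log\|T^{-n}\|=o(\sqrt n)$ by letting $\varepsilon\to 0$. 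This is the second half of \eqref{eq:Atzmon-hypothesis}.

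With both conditions of \eqref{eq:Atzmon-hypothesis} verified (with the same $k$ appearing in the resolvent bound), Theorem \ref{ThmAtzmon} applies and gives $(T-\zeta_0 I)^k=0$, which is the conclusion. The only delicate point, and the one I would be most careful about, is the choice of radius in the second integral and the asymptotic expansion of $(1-1/\sqrt n)^{-n}$: one must check that the exponential factor $\exp(\varepsilon/(1-r))$ stays of order $\exp(\varepsilon\sqrt n)$ and does not swamp the estimate, and that the quantifier order (``for all $\varepsilon>0$'') is preserved all the way to the $o(\sqrt n)$ conclusion. Everything else is routine contour-integral bookkeeping, and as the excerpt notes, the detailed argument is carried out in \cite{TheseVectCycl}.
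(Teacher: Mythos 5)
Your overall strategy --- recovering the two growth conditions \eqref{eq:Atzmon-hypothesis} from the resolvent bounds via Cauchy's inequalities applied to the Laurent expansions of $(\lambda I-T)^{-1}$ outside and inside $\T$, then invoking Theorem~\ref{ThmAtzmon} --- is exactly the route the paper indicates (the paper itself defers the details to \cite{TheseVectCycl}, saying only that the corollary follows ``using Cauchy's inequalities''). Your first half is fine: with $r=1+\tfrac1n$ one indeed gets $\norm{T^n}=O(n^k)$. (A harmless bookkeeping slip: $\frac{1}{2\pi i}\int_{|\lambda|=r}\lambda^{-n-1}(\lambda I-T)^{-1}\,d\lambda$ equals $-T^{-n-1}$, not $T^{-n}$; this shifts nothing in the asymptotics.)

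However, the second half contains a genuine error, located exactly at the ``delicate point'' you flagged. With the radius $r=1-1/\sqrt n$ you obtain
\[
\norm{T^{-n}}\lesssim K_\varepsilon\, r^{-n}\exp\Bigl(\frac{\varepsilon}{1-r}\Bigr)
=K_\varepsilon\exp\bigl(\sqrt n+O(1)\bigr)\exp\bigl(\varepsilon\sqrt n\bigr),
\]
hence $\log\norm{T^{-n}}\le(1+\varepsilon)\sqrt n+O(1)$. Letting $\varepsilon\to0$ only yields $\limsup_n\log\norm{T^{-n}}/\sqrt n\le 1$, \emph{not} $o(\sqrt n)$: the term $\sqrt n$ produced by $r^{-n}$ alone does not involve $\varepsilon$ and cannot be removed after the fact. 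The fix is to let the radius depend on $\varepsilon$. Writing $t=1-r$ and using $r^{-n}\le\exp\bigl(2nt\bigr)$ for $t\le\tfrac12$, the exponent to control is $2nt+\varepsilon/t$, whose minimum over $t>0$ is of order $\sqrt{\varepsilon n}$, attained at $t=\sqrt{\varepsilon/(2n)}$. Taking $r=1-\sqrt{\varepsilon/(2n)}$ gives $\log\norm{T^{-n}}\le C\sqrt{\varepsilon}\,\sqrt n+\log K_\varepsilon$ for an absolute constant $C$, so $\limsup_n\log\norm{T^{-n}}/\sqrt n\le C\sqrt\varepsilon$ for every $\varepsilon>0$, and now letting $\varepsilon\to0$ legitimately yields $\log\norm{T^{-n}}=o(\sqrt n)$. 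With this corrected choice of radius, both hypotheses of Theorem~\ref{ThmAtzmon} hold and the conclusion $(T-\zeta_0I)^k=0$ follows as you describe.
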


\section{Proof of Theorem~\ref{theorem-1} and Corollary~\ref{corollaire-1}}\label{cyclicity}

We restate Theorem~\ref{theorem-1} in the following.
\begin{theo}\label{theorem-1bis}
Let $\X$ satisfying (H1) to (H3) and let $\mathcal A$ satisfying  (H4) to  (H6). Then there exists $N\in\mathbb N^*$ such that for every $f,g\in\mathcal A$ satisfying  $|g(z)|\leq |f(z)|$ for every $z\in\D$, we have $[g^N]_\X\subset [f]_\X$.
\end{theo}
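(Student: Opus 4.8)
The plan is to use the Corona Theorem hypothesis (H6) to solve a Bezout-type identity, and then push the resulting $\A$-level decomposition down to $\X$ via Lemma~\ref{IdealfIncluCyclf}. The key difficulty is that $|g|\le|f|$ does \emph{not} give us directly a pair of functions to feed into the corona theorem: we need a lower bound of the form $\delta\le|f_1|+|f_2|$, and neither $f$ nor $g$ alone is bounded away from $0$. The standard trick is to introduce an auxiliary function built from $f$ and $g$ so that the corona hypothesis applies.

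Here is the order I would carry things out. First, normalize: since $\A\subset\mathfrak M(\X)\subset H^\infty$ (Lemma~\ref{multpXHinftyX}(1)), we may assume $\|f\|_\infty\le 1$ and $\|g\|_\infty\le 1$, so $|g|\le|f|\le 1$ on $\D$. Next, the idea is to apply (H6) \emph{not} to $f$ and $g$ themselves but to the pair $(f, g^k)$ for a suitable power $k$ — or more precisely, to work with $f$ together with a function that vanishes exactly where $f$ does but is controlled. The cleanest route: fix $\varepsilon>0$ and consider $f_1 = f$ and $f_2 = \varepsilon(1-f)$ or a similar convex-combination gadget; but the truly effective choice in this circle of ideas (cf.\ \cite{MR2979822,Egueh-Kellay-Zarrabi}) is to observe that since $|g|\le|f|$, for the pair $(f,\,1-f/?)$... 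Let me instead follow the cleaner path: apply (H6) to $f_1=f$ and $f_2 = c\,g^{M}$ — no. The workable version is this. Since $|g|^2\le|f|^2$ hence $|g|^2 \le |f|^2 + |g|^2 - |g|^2$, one shows $|f| + |g - f\cdot h|$ is bounded below for an appropriate $h$; rather than guess, I would set up: for each $\delta\in(0,1)$ split $\D = \{|f|\ge\delta\}\cup\{|f|<\delta\}$, and on the bad set $|g|<\delta$ too. So the pair $f_1 = f$, $f_2 = \delta$ (a constant, which lies in $\A$ since $\A$ is unital) satisfies $\delta \le |f_1|+|f_2| \le 1 + \delta$; rescaling by $1/(1+\delta)$ we get functions to which (H6) applies, producing $u,v\in\A$ with $fu + \delta v = 1$ and $\|u\|_\A,\|v\|_\A \lesssim \delta^{-A'}$.

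Then the main computation: from $fu = 1 - \delta v$ we want to conclude $g^N \in \I_f$ for $N$ large, \emph{independent of $\delta$} — and then let me use this with a well-chosen $\delta$. Multiply the identity by powers and use $|g|\le|f|$, $|\delta v|$ small where... Actually the decisive estimate is: $g^N = g^N(fu + \delta v)^? $; expanding $(fu+\delta v)^m = 1$ gives $1 = \sum_{j} \binom{m}{j} (fu)^j (\delta v)^{m-j} = f\cdot(\text{stuff}) + (\delta v)^m$, so $g^N = f\cdot g^N(\cdots) + g^N (\delta v)^m$. The first term is in $\I_f$; for the second we need $\|g^N (\delta v)^m\|_\A$ small, using $\|g\|_\A$ bounded and $\|\delta v\|_\A \lesssim \delta^{1-A'}$ — this only helps if $\delta^{1-A'}<1$, i.e.\ it does \emph{not} help directly, which is exactly the obstacle. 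The resolution (this is the heart of the argument, and where I expect the real work to be): one does not estimate $\|(\delta v)^m\|_\A$ crudely but instead balances $N$ against $\delta$. Writing $g^N = g^{N}$ with $N = A'' \cdot(\text{something})$, the pointwise bound $|g|\le|f|$ lets one absorb enough factors of $g$ into the "$\delta v$" term to beat the $\delta^{-A'}$ blow-up: roughly, $g^{N}(\delta v)^m$ with each extra $g$ contributing a factor one can bound by $|f|$ on the support where $|\delta v|$ is large. Making this precise is the crux; I would iterate the Bezout identity (replace $1$ by $fu+\delta v$ repeatedly inside the remainder term), pick $m$ and $N$ of the same order as a fixed multiple of $A$, and finally choose $\delta$ small enough that the total $\A$-norm of the remainder is $<1$, forcing $g^N\in\I_f$. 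Then Lemma~\ref{IdealfIncluCyclf} gives $g^N \in \I_f \subset [f]_\X$, and since $[g^N]_\X$ is the smallest $S$-invariant closed subspace containing $g^N$, while $\I_f\subset[f]_\X$ is $S$-invariant (it is an ideal of $\A\ni\chi_1$) and closed in $\X$... wait, $\I_f$ is closed in $\A$, not in $\X$; but $[f]_\X$ is closed in $\X$ and $S$-invariant and contains $g^N$, hence contains $[g^N]_\X$. This last deduction is immediate once $g^N\in[f]_\X$ is established.

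In summary: (1) normalize to $\|f\|_\infty,\|g\|_\infty\le1$; (2) for $\delta\in(0,1)$ apply (H6) to $(f,\delta)$ to get a Bezout identity $fu+\delta v=1$ with polynomial control $\|u\|_\A,\|v\|_\A\lesssim\delta^{-A'}$; (3) iterate/binomially expand to write $g^N = f\cdot(\text{element of }\A) + (\text{remainder})$ and, using $|g|\le|f|$ together with the algebra norm estimates, bound the remainder's $\A$-norm by a quantity $\to 0$ as $\delta\to 0$ once $N$ is a fixed multiple of $A$; (4) deduce $g^N\in\I_f$, hence $g^N\in[f]_\X$ by Lemma~\ref{IdealfIncluCyclf}, hence $[g^N]_\X\subset[f]_\X$. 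The main obstacle, as indicated, is step (3): the naive estimate loses because of the $\delta^{-A'}$ factor, and one must exploit the pointwise domination $|g|\le|f|$ to convert surplus powers of $g$ into cancellation against the small parameter $\delta$ — this is precisely the mechanism that fixes $N$ in terms of $A$.
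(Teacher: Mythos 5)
There is a genuine gap, and it sits exactly where you say you expect ``the real work to be'': your step (3) is not a sketch of an argument but an acknowledgment that you do not have one, and the corona pair you chose cannot produce one. Applying (H6) to $(f,\delta)$ with $\delta$ a \emph{constant} yields a Bezout identity that is trivially solvable ($u=0$, $v=\delta^{-1}\chi_0$ already works), so the resulting identity $fu+\delta v=1$ encodes no information about $f$, and in particular none about the relation between $f$ and $g$. The hypothesis $|g|\le|f|$ can only enter the argument through the pointwise lower bound required by (H6) — that is the unique interface between pointwise data and $\A$-norm control — and your pair does not use it there. Your fallback mechanism, ``absorb surplus powers of $g$ into the $\delta v$ term using $|g|\le|f|$ on the support where $|\delta v|$ is large,'' cannot be made rigorous: $\A$-norms of products are not controlled by pointwise domination (if they were, the whole theorem would be easy), so there is no way to convert $|g|\le|f|$ into the $\A$-norm smallness of a remainder.

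The paper's proof uses a different corona pair and a different conclusion mechanism. For each $\lambda\in\C^*$ it applies (H6) to $\bigl(1-\lambda g,\,f\bigr)$ (rescaled by $M_\lambda=1+|\lambda|\|g\|_\infty+\|f\|_\infty$); here $|g|\le|f|$ gives precisely the needed lower bound $|1-\lambda g(z)|+|f(z)|\ge\min\bigl(\tfrac12,\tfrac{1}{2|\lambda|}\bigr)$, since either $|g(z)|\le\tfrac{1}{2|\lambda|}$ (so $|1-\lambda g(z)|\ge\tfrac12$) or $|f(z)|\ge|g(z)|\ge\tfrac{1}{2|\lambda|}$. Passing to the quotient Banach algebra $\A/\I_f$, the identity $(1-\lambda g)G_\lambda+fF_\lambda=1$ shows $\Pi(1-\lambda g)$ is invertible for all $\lambda$ with $\norm{\Pi(1-\lambda g)^{-1}}\lesssim M_\lambda^{A-1}\delta_\lambda^{-A}=O(|\lambda|^{2A-1})$. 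For any $\ell\in(\A/\I_f)^*$ the entire function $\varphi(\lambda)=\scal{\Pi(1-\lambda g)^{-1}}{\ell}$ has Taylor coefficients $\scal{\Pi(g^n)}{\ell}$ at the origin and polynomial growth of degree $2A-1$, so Liouville's theorem forces $\scal{\Pi(g^N)}{\ell}=0$ for $N=[2A]$; Hahn--Banach then gives $g^N\in\I_f$ and Lemma~\ref{IdealfIncluCyclf} finishes. Your outer frame (target $g^N\in\I_f$, then $\I_f\subset[f]_\X$, then $[g^N]_\X\subset[f]_\X$) matches the paper, but the central device — the complex parameter $\lambda$, the quotient algebra, and Liouville — is missing from your proposal and is not recoverable from the decomposition you set up.
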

\begin{proof}
The proof follows the same lines as  \cite[Theorem 2.5]{Egueh-Kellay-Zarrabi}. For $\lambda \in \C^*$ define 
\[
\delta_\lambda := \inf_{z \in \D} \left \{ \abs{1 - \lambda g(z)} + \abs{f(z)} \right \}.
\] 
Let $z \in \D$. If $\abs{g(z)} \le \frac{1}{2\abs{\lambda}}$, then 
\[
\abs{1 - \lambda g(z)} \ge 1 - \abs{\lambda}\abs{g(z)} \ge 1 - \frac{1}{2} = \frac{1}{2}.
\]
Otherwise if $\abs{g(z)} \ge \frac{1}{2\abs{\lambda}}$, then 
\[
\abs{f(z)} \ge \abs{g(z)} \ge \frac{1}{2 \abs{\lambda}}.
\]
In particular, we get
\[
\delta_\lambda \ge \min \left(\frac{1}{2}, \frac{1}{2\abs{\lambda}} \right) > 0.
\]
 Let us now define $M_\lambda = 1 + \abs{\lambda} \norm{g}_\infty + \norm{f}_\infty$. Note that according to (H4) and Lemma \ref{multpXHinftyX}, $f,g\in H^\infty$, and then $M_\lambda$ is finite. Thus we have 
\[
0<\frac{\delta_\lambda}{M_\lambda} \le \frac{\abs{1 - \lambda g(z)} + \abs{f(z)}}{M_\lambda} \le \frac{1 + \abs{\lambda}\norm{g}_\infty + \norm{f}_\infty}{M_\lambda} = 1.
\] 
From (H6), there exists two functions $\tilde{G}_\lambda, \tilde{F}_\lambda \in \A$ such that $$(1 - \lambda g)\frac{\tilde{G}_\lambda}{M_\lambda} + f\frac{\tilde{F}_\lambda}{M_\lambda} \equiv 1 \text{~on~} \D \text{~and~} \|\tilde{G}_\lambda\|_\A, \|\tilde{F}_\lambda\|_\A \le \frac{CM_\lambda^A}{\delta_\lambda^A}.$$
    Take now $G_\lambda = \frac{\tilde{G}_\lambda}{M_\lambda} \in \A$ and $F_\lambda = \frac{\tilde{F}_\lambda}{M_\lambda} \in \A$. We thus obtain  
    \begin{equation}\label{coronaThm1}
        (1 - \lambda g)G_\lambda + fF_\lambda \equiv 1 \text{~on~} \D \text{~and~} \norm{G_\lambda}_\A, \norm{F_\lambda}_\A \le \frac{CM_\lambda^{A-1}}{\delta_\lambda^A}.
    \end{equation}
    
 We may assume that the closed ideal $\mathcal I_f$ of $\A$ defined by \eqref{def-ideal} is a proper ideal of $\A$, otherwise by Lemma~\ref{IdealfIncluCyclf}, the result is trivial.  Let us introduce the quotient map $\Pi : \A \longrightarrow \A / \I_f$ where we recall that the quotient space $\A / \I_f$ is endowed with the usual norm 
 \[
 \norm{\Pi(h)}_{\A / \I_f} := \text{dist}(h, \I_f) = \inf_{\varphi \in \I_f} \norm{h-\varphi}_\A,\qquad h\in\A,
 \] 
 making $\A/\mathcal I_f$ a Banach algebra. Using \eqref{coronaThm1} and the fact that $fF_\lambda\in\mathcal I_f$, we see that $\Pi(1-\lambda g)\Pi(G_\lambda)=\mathbbm{1}$ for every $\lambda\in\C^*$, where $\mathbbm{1}$ is the unit of $\A/\mathcal I_f$. Thus $\Pi(1-\lambda g)$ is invertible for every $\lambda\in\C$, and for $\lambda\in\C^*$, we have $\Pi(1-\lambda g)^{-1}=\Pi(G_\lambda)$. In particular, with \eqref{coronaThm1}, we get that,  for every $\lambda\in\C^*$, 
 \[
 \norm{\Pi(1-\lambda g)^{-1}}_{\A / \I_f} = \norm{\Pi(G_\lambda)}_{\A / \I_f} \le \norm{G_\lambda}_\A \lesssim \frac{M_\lambda^{A-1}}{\delta_\lambda^A}.
 \]
 Now let $\ell\in \left (\A / \I_f \right)^*$, $\ell\neq 0$, and define 
 \[
 \varphi(\lambda)= \scal{\Pi(1-\lambda g)^{-1}}{\ell},\qquad \lambda\in\C.
 \] 
 The function $\varphi$ is an entire function. Moreover, for $|\lambda|>1$, $\delta_\lambda=\frac{1}{2|\lambda|}$, and we obtain
 \[
 |\varphi(\lambda)|\lesssim  \norm{\Pi(1-\lambda g)^{-1}}_{\A / \I_f}\lesssim M_\lambda^{A-1}|\lambda|^A.
 \]
 Since $M_\lambda=1+\|f\|_\infty+|\lambda|\|g\|_\infty$, we see that 
 \begin{equation}\label{eq:croissance-varphi}
 |\varphi(\lambda)|=O(|\lambda|^{2A-1}), \quad |\lambda|\to\infty.
 \end{equation}
 We may of course assume that $g\neq 0$. For $|\lambda|<\|g\|^{-1}_{\A}$, note that 
 \[
 (1-\lambda g)^{-1}=\sum_{n=0}^\infty \lambda^n g^n,
 \]
 and thus
\[
\varphi(\lambda)=\sum_{n=0}^\infty  \langle \Pi(g^n)|\ell\rangle\lambda^n.
\]
It follows from \eqref{eq:croissance-varphi} and Liouville's theorem that, for every $n>2A-1$, we have $\langle \Pi(g^n)|\ell\rangle=0$. In particular, for $N=[2A]$, we have $\langle \Pi(g^N)|\ell\rangle=0$. Since this holds for any $\ell\in \left (\A / \I_f \right)^*$, we get that $\Pi(g^N)=0$, that is $g^N\in\I_f$. It then follows from Lemma~\ref{IdealfIncluCyclf} that $[g^N]_\X\subset [f]_\X$, which concludes the proof.
 \end{proof}

\begin{rem}
Note that we can take $N=[2A]$ in Theorem~\ref{theorem-1bis}, where $A\geq 1$ is the constant given in (H6).
\end{rem}

We restate Corollary~\ref{corollaire-1} in the following. 
\begin{coro}\label{corollaire-1-bis}
Let $\X$ satisfying  (H1) to  (H3) and let $\mathcal A$ satisfying (H4) to (H6). Let $f,g\in\mathcal A$ such that for every $z\in\D$, we have $|g(z)|\leq |f(z)|$. Suppose $g$ is cyclic for $S$ in $\X$. Then $f$ is cyclic for $S$ in $\X$. 
\end{coro}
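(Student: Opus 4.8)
The plan is to derive this as an immediate consequence of Theorem~\ref{theorem-1bis} combined with Corollary~\ref{puissCycl}. First I would apply Theorem~\ref{theorem-1bis} to the pair $f,g\in\A$: since $|g(z)|\leq|f(z)|$ for every $z\in\D$, there exists $N\in\N^*$ (one may take $N=[2A]$, where $A$ is the exponent from (H6)) such that
\[
[g^N]_\X\subset [f]_\X.
\]

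Next, since $g\in\A$ is cyclic for $S$ in $\X$ by hypothesis, Corollary~\ref{puissCycl} ensures that $g^N$ is again cyclic for $S$ in $\X$, i.e.\ $[g^N]_\X=\X$. Combining the two facts yields $\X=[g^N]_\X\subset[f]_\X\subset\X$, hence $[f]_\X=\X$, which is precisely the cyclicity of $f$ for $S$ in $\X$.

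There is no genuine obstacle here; all the work is already contained in Theorem~\ref{theorem-1bis} (the corona-theorem plus Liouville argument passing to the quotient $\A/\I_f$) and in the elementary multiplier estimates underlying Lemma~\ref{cyclProdXA} and Corollary~\ref{puissCycl}. The only point worth noting is that Corollary~\ref{puissCycl} genuinely requires $g\in\A$ rather than merely $g\in\X$, so that Lemma~\ref{cyclProdXA} applies with the factor $\varphi=g\in\A\subset\mathfrak{M}(\X)$ by (H4); this is exactly the standing hypothesis, so the deduction goes through without further assumptions.
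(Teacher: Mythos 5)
Your proposal is correct and coincides with the paper's own proof: both invoke Theorem~\ref{theorem-1bis} to get $[g^N]_\X\subset[f]_\X$ and Corollary~\ref{puissCycl} to get the cyclicity of $g^N$, then conclude $[f]_\X=\X$. Your remark that $g$ must lie in $\A$ (not merely $\X$) for Corollary~\ref{puissCycl} to apply is a correct and worthwhile observation.
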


\begin{proof}
    Since $g \in \A$ is cyclic for $S$ in $\X$, we get from Corollary \ref{puissCycl} that $g^m$ is also cyclic for $S$ in $\X$ for every $m \in \N^*$. Moreover, according to Theorem \ref{theorem-1bis}, there exists $N \in \N^*$ such that $[g^N]_\X \subset [f]_\X$. Then the cyclicity of $g^N$ immediately implies the cyclicity of $f$. 
\end{proof}

\begin{coro}
Let $\X$ satisfying  (H1) to  (H3) and let $\mathcal A$ satisfying (H4) to (H6). Let $f\in\mathcal A$ and assume that $\inf_{\D}|f(z)|>0$. Then $f$ is cyclic for $S$ in $\X$.
\end{coro}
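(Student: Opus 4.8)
The plan is to deduce this immediately from Corollary~\ref{corollaire-1-bis} by comparing $f$ with a suitable nonzero constant function. Set $\delta := \inf_{z \in \D} |f(z)|$, which is a strictly positive real number by hypothesis. Since $\A$ is a unital Banach algebra, the constant function $g \equiv \delta$ belongs to $\A$, and by the very definition of $\delta$ we have $|g(z)| = \delta \le |f(z)|$ for every $z \in \D$. So the pair $(f,g)$ satisfies the comparison hypothesis of Corollary~\ref{corollaire-1-bis}.

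It remains to observe that $g$ is cyclic for $S$ in $\X$. Because $\delta \neq 0$, the set $\{pg : p \in \mathcal P\} = \{\delta p : p \in \mathcal P\}$ coincides with $\mathcal P$, hence $[g]_\X = \overline{\mathcal P}^\X = \X$ by (H3); thus $g$ is cyclic. Applying Corollary~\ref{corollaire-1-bis} to $(f,g)$ then gives that $f$ is cyclic for $S$ in $\X$, which is exactly the assertion. There is essentially no obstacle here: the only point to notice is that constants belong to $\A$ and are automatically cyclic thanks to (H3), after which the statement is a direct instance of Corollary~\ref{corollaire-1-bis}. (One could equally well bypass the corollary and invoke Theorem~\ref{theorem-1bis} directly to obtain $[g^N]_\X \subset [f]_\X$ for some $N$; since $g^N$ is still a nonzero constant and hence cyclic, $f$ is cyclic — but routing through Corollary~\ref{corollaire-1-bis} is the cleanest formulation.)
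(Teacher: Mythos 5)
Your proof is correct and follows essentially the same route as the paper: compare $f$ with the constant $g=\delta\chi_0\in\A$, note that $g$ is cyclic because $\{\delta p : p\in\mathcal P\}=\mathcal P$ is dense by (H3), and conclude via Corollary~\ref{corollaire-1-bis}. Nothing to add.
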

\begin{proof}
Denote by $\delta=\inf_{\D}|f(z)|>0$. Since $\mathcal P$ is dense in $\X$, the constant function $g=\delta\chi_0$ is cyclic  for $S$ in $\X$. The conclusion now follows from Corollary \ref{corollaire-1-bis}. 
\end{proof}

We can weaken the assumption $|g(z)|\leq |f(z)|$ in Theorem~\ref{theorem-1bis} and obtain a similar conclusion.
\begin{theo}\label{theo-evgue-logarithm-growth}
Let $\X$ satisfying (H1) to (H3) and let $\mathcal A$ satisfying  (H4) to  (H6). Let $f,g\in\mathcal A$ and assume that $\Re(g)\geq 0$ and there exists $\gamma>1$ such that 
\[
|g(z)|\leq \left(\log\frac{\|f\|_\A}{|f(z)|}\right)^{-\gamma},\qquad z\in\D.
\]
Then we have $[g]_\X\subset [f]_\X$.
\end{theo}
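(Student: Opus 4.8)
The plan is to mimic the proof of Theorem~\ref{theorem-1bis}, replacing the linear perturbation $1-\lambda g$ by an exponential one. Since $\Re(g)\ge 0$, for $\lambda\in\C$ with $\Re(\lambda)\le 0$ the function $e^{\lambda g}$ is bounded on $\D$ by $1$, so one sets
\[
\delta_\lambda:=\inf_{z\in\D}\left\{|e^{\lambda g(z)}|+|f(z)|\right\},
\]
and the goal is a lower bound of the form $\delta_\lambda\gtrsim e^{-c|\lambda|^{1/\gamma}}$ (or similar subexponential rate) for $\Re(\lambda)\le 0$. First I would split $\D$ into the region where $|e^{\lambda g(z)}|\ge\tfrac12$, where one is done, and the region where $|e^{\lambda g(z)}|<\tfrac12$, i.e. $\Re(\lambda g(z))\le-\log 2$; there one wants to show $|f(z)|$ cannot be too small. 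Since $|e^{\lambda g(z)}| = e^{\Re(\lambda g(z))}$ and $\Re(\lambda g(z))\ge-|\lambda|\,|g(z)|$, being in the bad region forces $|g(z)|\ge \log 2/|\lambda|$, hence by the hypothesis $\left(\log\frac{\|f\|_\A}{|f(z)|}\right)^{-\gamma}\ge \log2/|\lambda|$, i.e. $\log\frac{\|f\|_\A}{|f(z)|}\le (|\lambda|/\log 2)^{1/\gamma}$, which gives $|f(z)|\ge \|f\|_\A\,e^{-(|\lambda|/\log2)^{1/\gamma}}$. Combining the two regions yields $\delta_\lambda\ge c\,e^{-c'|\lambda|^{1/\gamma}}$ for $\Re(\lambda)\le 0$.

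Next, as in the proof of Theorem~\ref{theorem-1bis}, normalize: set $M_\lambda=1+\|e^{\lambda g}\|_\infty+\|f\|_\infty$, which is bounded by $2+\|f\|_\infty$ when $\Re(\lambda)\le 0$, observe $0<\delta_\lambda/M_\lambda\le(|e^{\lambda g}|+|f|)/M_\lambda\le 1$ on $\D$, and note that $e^{\lambda g}\in\A$ because $g\in\A$ and $\A$ is a Banach algebra (so the exponential series converges in $\A$). Apply (H6) to the pair $e^{\lambda g}/M_\lambda$ and $f/M_\lambda$ to get $G_\lambda,F_\lambda\in\A$ with $e^{\lambda g}G_\lambda+fF_\lambda\equiv 1$ on $\D$ and $\|G_\lambda\|_\A\le CM_\lambda^{A-1}/\delta_\lambda^A\lesssim e^{A c'|\lambda|^{1/\gamma}}$ for $\Re(\lambda)\le 0$. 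Assuming $\I_f\subsetneq\A$ (else Lemma~\ref{IdealfIncluCyclf} finishes), pass to the quotient Banach algebra $\A/\I_f$ via $\Pi$; then $\Pi(e^{\lambda g})$ is invertible for \emph{all} $\lambda\in\C$ (for $\Re(\lambda)\le 0$ its inverse is $\Pi(G_\lambda)$; for $\Re(\lambda)>0$ use $\Pi(e^{\lambda g})\Pi(e^{-\lambda g})=\mathbbm 1$ since $\Re(-\lambda)\le 0$), and for any $\ell\in(\A/\I_f)^*$ define the entire function
\[
\varphi(\lambda)=\scal{\Pi(e^{\lambda g})^{-1}}{\ell}=\scal{\Pi(e^{-\lambda g})}{\ell}=\sum_{n=0}^\infty\frac{(-1)^n}{n!}\scal{\Pi(g^n)}{\ell}\,\lambda^n,
\]
the last equality being the convergent power series of $\lambda\mapsto e^{-\lambda g}$ in $\A$ followed by continuity of $\Pi$ and $\ell$.

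Now I would estimate $\varphi$ on the left half-plane: $|\varphi(\lambda)|\le\|\ell\|\,\|\Pi(G_\lambda)\|_{\A/\I_f}\lesssim e^{Ac'|\lambda|^{1/\gamma}}$ for $\Re(\lambda)\le 0$, while on the right half-plane $|\varphi(\lambda)|=|\scal{\Pi(e^{-\lambda g})}{\ell}|\le\|\ell\|\,\|e^{-\lambda g}\|_\A$, and one needs a bound here as well; since $\|e^{-\lambda g}\|_\A\le e^{|\lambda|\|g\|_\A}$ this is at worst $O(e^{c''|\lambda|})$, which is fine for what follows. Since $\gamma>1$, the exponent $1/\gamma<1$ on the left half-plane, so $\varphi$ has order strictly less than $1$ there; combined with the at-most order-$1$, minimal-type-ish growth on the right, a Phragmén--Lindelöf argument forces $\varphi$ to be of exponential type $0$ (order $\le 1$, and zero type). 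A cleaner route: the Taylor coefficients $c_n=\frac{(-1)^n}{n!}\scal{\Pi(g^n)}{\ell}$ satisfy, by the left-half-plane Cauchy estimates on circles of radius $R$, $|c_n|\lesssim e^{Ac'R^{1/\gamma}}R^{-n}$; optimizing over $R>0$ (take $R\sim n^{\gamma}$ up to constants) gives $|c_n|\le C^n/(n!)^{\gamma-1+o(1)}$ or at any rate a bound forcing, together with $|c_n|\le \|\ell\|\,\|g\|_\A^n/n!$ from the right side, that the entire function has order $\le 1/\gamma<1$; hence it is actually bounded-by-polynomial? No --- rather, an entire function of order $<1$ that is bounded on the imaginary axis by... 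Actually the simplest finish: $\varphi$ restricted to $\R^-$ is bounded by $e^{Ac'|\lambda|^{1/\gamma}}$ and $\varphi$ restricted to $\R^+$ by $e^{\|g\|_\A\lambda}$; if additionally one shows $\varphi$ is bounded on $i\R$ — which follows since $|e^{\pm it g}|=1$ on $\D$ so $\delta_{it}\ge$ constant and $\|G_{it}\|_\A$ is bounded — then Phragmén--Lindelöf on each quarter-plane gives that $\varphi$ is bounded on $\C$, hence constant by Liouville, hence $c_n=0$ for all $n\ge 1$, so $\scal{\Pi(g^n)}{\ell}=0$ for all $n\ge 1$ and all $\ell$, giving $\Pi(g)=0$, i.e. $g\in\I_f$, and Lemma~\ref{IdealfIncluCyclf} yields $[g]_\X\subset[f]_\X$.

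The main obstacle is the complex-analytic growth bookkeeping: establishing the lower bound $\delta_\lambda\gtrsim e^{-c|\lambda|^{1/\gamma}}$ on the left half-plane (straightforward once the two-region split is set up), but more delicately, getting matching control of $\varphi$ on the right half-plane and the imaginary axis so that a Phragmén--Lindelöf / Liouville argument of type zero applies. The key point that makes it work is $\gamma>1$: the growth exponent $1/\gamma$ is strictly below $1$, so the entire function $\varphi$ cannot have positive exponential type and must be constant; this is exactly the place where the hypothesis is used and where I expect the write-up to need the most care in stating the correct Phragmén--Lindelöf lemma (e.g. applied on the four quarter-planes, using boundedness on $i\R$ and subexponential growth on $\R^-$).
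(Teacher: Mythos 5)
The paper itself only sketches this proof in one line (``replace Liouville by Phragm\'en--Lindel\"of, details left to the reader''), so your overall skeleton --- corona identity, quotient algebra $\A/\I_f$, an entire function attached to a functional $\ell$, then classical complex analysis --- is certainly the intended one. But your implementation contains two concrete false claims that the whole argument rests on. First, $\Re(g)\ge 0$ does \emph{not} imply $|e^{\lambda g}|\le 1$ on $\D$ for all $\lambda$ in the closed left half-plane: writing $\lambda=\alpha+i\beta$ and $g=u+iv$ with $u\ge 0$, one has $|e^{\lambda g(z)}|=e^{\alpha u(z)-\beta v(z)}$, and the term $-\beta v(z)$ is completely uncontrolled. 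The inequality holds only for $\lambda\in(-\infty,0]$. Consequently your normalization $M_\lambda\le 2+\|f\|_\infty$, the applicability of (H6) with bounded $M_\lambda$, and the estimate $|\varphi(\lambda)|\lesssim e^{Ac'|\lambda|^{1/\gamma}}$ are justified only on the negative real half-line, not on the left half-plane. Second, and for the same reason, $|e^{\pm itg(z)}|=e^{\mp t\,\Im g(z)}\neq 1$ in general, so your claim that $\varphi$ is bounded on $i\R$ is unfounded; this was the linchpin of your quarter-plane Phragm\'en--Lindel\"of step.

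The gap is not cosmetic. Strip your argument down to what is actually proved: $\varphi$ is entire of exponential type (from $\|e^{-\lambda g}\|_\A\le e^{|\lambda|\|g\|_\A}$) and satisfies a subexponential bound $e^{c|\lambda|^{1/\gamma}}$ on one ray. That data does not force $\varphi$ to be constant --- $\lambda\mapsto e^{-\lambda}$ and $\lambda\mapsto\cos(\tau\lambda)$ satisfy both conditions --- so no Liouville or Phragm\'en--Lindel\"of theorem can close the argument from there. (Even taking your claimed estimates at face value, your final step is also underspecified: Phragm\'en--Lindel\"of on a quarter-plane needs \emph{boundedness} on both bounding rays, whereas you only have subexponential growth on $\R^-$ and exponential growth on $\R^+$.) Your lower bound for $\delta_\lambda$ via the two-region split and the hypothesis $|g|\le(\log(\|f\|_\A/|f|))^{-\gamma}$ is correct and is surely part of the real proof, but to complete it you must obtain genuine control of the relevant analytic function on more than one ray --- which is exactly where $\Re(g)\ge 0$ has to be exploited in a way compatible with \emph{complex} $\lambda$ (e.g.\ through resolvent-type quantities such as $\lambda+g$, for which $\Re(\lambda+g)\ge\Re\lambda$ on a half-plane), rather than through $|e^{\lambda g}|$.
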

\begin{proof}
The proof is similar to the proof of Theorem~\ref{theorem-1bis}, but we replace Liouville's theorem by a Phragmen--Lindel\"of theorem. The details are left to the reader.
\end{proof}

\begin{rem}
Theorem \ref{theo-evgue-logarithm-growth} appears in  \cite{Egueh-Kellay-Zarrabi} in the case when $\X=\mathcal D_\alpha^p$ the Besov--Dirichlet space and $\A=\mathcal D_\alpha^p\cap A(\D)$. The proof is similar in our general context.
\end{rem}

\section{Proof of Theorem~\ref{theorem-2}}\label{Section4}

We restate Theorem~\ref{theorem-2} in the following. 
\begin{theo}\label{theorem-2-bis}
Let $\X$ satisfying (H1) to  (H3) and let $\mathcal A$ satisfying (H4) to (H8). Assume that there exists $\zeta_0\in\T$ such that $z-\zeta_0$ is cyclic for $S$ in $\X$. Let $f\in\mathcal A\cap A(\D)$ be an outer function such that $\mathcal Z(f)=\{\zeta_0\}$. Then $f$ is cyclic for $S$ in $\X$. 
\end{theo}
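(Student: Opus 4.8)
The plan is to transplant the corona argument of Theorem~\ref{theorem-1bis} to the quotient Banach algebra $\A/\I_f$ and then feed the outcome into Atzmon's theorem through Corollary~\ref{coroThmAtzmon}. If $\I_f=\A$, then $1\in\I_f\subset[f]_\X$ by Lemma~\ref{IdealfIncluCyclf} and $f$ is cyclic, so assume $\I_f$ is proper. Let $\Pi\colon\A\to\A/\I_f$ be the quotient map and let $T$ be multiplication by $\Pi(\chi_1)$ on $\A/\I_f$ (which makes sense since $\chi_1\in\A$ by (H7)), so that $T-\lambda I$ is invertible precisely when $\Pi(\chi_1)-\lambda$ is invertible in $\A/\I_f$. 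I intend to show that $\sigma(T)=\{\zeta_0\}$ and that $T$ satisfies the resolvent hypotheses of Corollary~\ref{coroThmAtzmon}; that corollary then gives $(T-\zeta_0 I)^{k}=0$ for a suitable $k$, and since $T-\zeta_0 I$ is multiplication by $\Pi(\chi_1-\zeta_0)$ this forces $(z-\zeta_0)^{k}\in\I_f\subset[f]_\X$. Since $z-\zeta_0\in\A$ by (H7) and unitality, and $z-\zeta_0$ is cyclic by hypothesis, Corollary~\ref{puissCycl} makes $(z-\zeta_0)^{k}$ cyclic, whence $\X=[(z-\zeta_0)^{k}]_\X\subset[f]_\X$ and $f$ is cyclic.

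We may assume $\|f\|_\infty=1$, which affects none of the hypotheses. For $\lambda\in\overline{\D}\setminus\{\zeta_0\}$ set $\delta_\lambda:=\inf_{z\in\D}\big(|z-\lambda|+|f(z)|\big)$. The continuous function $z\mapsto|z-\lambda|+|f(z)|$ has no zero on the compact set $\overline{\D}$ — it could only vanish at $z=\lambda$, and there $f(\lambda)\neq0$ because $f$ is zero-free on $\D$ (being outer) and $\mathcal Z(f)=\{\zeta_0\}$ — so $\delta_\lambda>0$; moreover $\lambda\mapsto\delta_\lambda$ is $1$-Lipschitz, hence bounded below on any compact subset of $\overline{\D}\setminus\{\zeta_0\}$. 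Since $|z-\lambda|\le2$ on $\D$, applying (H6) to $(z-\lambda)/3$ and $f/3$ yields $G_\lambda,F_\lambda\in\A$ with $(z-\lambda)G_\lambda+fF_\lambda\equiv1$ on $\D$ and $\|G_\lambda\|_\A\lesssim\delta_\lambda^{-A}$; applying $\Pi$ and using $fF_\lambda\in\I_f$ gives $(\Pi(\chi_1)-\lambda)\Pi(G_\lambda)=\mathbbm{1}$, so $\lambda\notin\sigma(T)$ and $\|(T-\lambda I)^{-1}\|\le\|G_\lambda\|_\A\lesssim\delta_\lambda^{-A}$. For $|\lambda|>1$ one instead uses $(z-\lambda)^{-1}\in\Hol(\overline{\D})\subset\A$ (Lemma~\ref{lem:spectreS}(ii)) to obtain $\lambda\notin\sigma(T)$ and, by Lemma~\ref{lem:spectreS}(iii), $\|(T-\lambda I)^{-1}\|\lesssim|\lambda|^{p+1}/(|\lambda|-1)^{p+1}$. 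Hence $\sigma(T)\subset\{\zeta_0\}$, and $\sigma(T)\neq\emptyset$ since $\A/\I_f\neq\{0\}$, so $\sigma(T)=\{\zeta_0\}$; the bound for $|\lambda|>1$ is the first hypothesis of Corollary~\ref{coroThmAtzmon} with $k=p+1$.

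The crux is the second hypothesis: for each $\varepsilon>0$ a bound $\|(T-\lambda I)^{-1}\|\lesssim\delta_\lambda^{-A}\le K_\varepsilon\exp\big(\varepsilon/(1-|\lambda|)\big)$ for $|\lambda|<1$, which by the Lipschitz remark above reduces to $(1-|\lambda|)\log(1/\delta_\lambda)\to0$ as $\lambda\to\zeta_0$. This is where the outer hypothesis enters: $u:=-\log|f|$ is nonnegative and harmonic on $\D$ and is the Poisson integral of the nonnegative function $-\log|f|\in L^1(\T)$. Harnack's inequality on $D\big(\lambda,\tfrac12(1-|\lambda|)\big)\subset\D$ gives $u(z)\le3u(\lambda)$, i.e. $|f(z)|\ge|f(\lambda)|^{3}$, whenever $|z-\lambda|\le\tfrac14(1-|\lambda|)$; splitting the infimum defining $\delta_\lambda$ at the threshold $|z-\lambda|=\tfrac14(1-|\lambda|)$ gives
\[
\delta_\lambda\ \ge\ \min\!\Big(|f(\lambda)|^{3},\ \tfrac14(1-|\lambda|)\Big),
\]
so $\log(1/\delta_\lambda)\le\max\big(3u(\lambda),\ \log\tfrac{4}{1-|\lambda|}\big)$. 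As $(1-|\lambda|)\log\tfrac{4}{1-|\lambda|}\to0$, it remains to see $(1-|\lambda|)u(\lambda)\to0$; this is the classical fact that the Poisson integral of an $L^1$ function is $o(1/(1-|\lambda|))$ — write $(1-|\lambda|)u(\lambda)=\int_\T(1-|\lambda|)P_\lambda(\xi)\big(-\log|f(\xi)|\big)\,dm(\xi)$, use $(1-|\lambda|)P_\lambda\le2$ together with absolute continuity of the integral to make the mass over a small arc around $\zeta_0$ small, and $(1-|\lambda|)P_\lambda(\xi)\lesssim(1-|\lambda|)^2/|\xi-\zeta_0|^2$ to kill the rest once $\lambda$ is near $\zeta_0$. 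Extracting this quantitative lower bound for $\delta_\lambda$ from the outer hypothesis is the step I expect to be the main technical obstacle.

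With both hypotheses of Corollary~\ref{coroThmAtzmon} in hand (with $k=p+1$) we conclude $(T-\zeta_0 I)^{p+1}=0$, hence $(z-\zeta_0)^{p+1}\in\I_f\subset[f]_\X$, and the argument of the first paragraph concludes that $f$ is cyclic for $S$ in $\X$.
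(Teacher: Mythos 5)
Your proposal is correct and follows essentially the same route as the paper: pass to the quotient $\A/\I_f$, study the multiplication operator $T$ by $\Pi(\chi_1)$, obtain the resolvent bounds from (H6) inside $\overline{\D}$ and from Lemma~\ref{lem:spectreS} outside, and invoke Corollary~\ref{coroThmAtzmon} to get $(z-\zeta_0)^{p+1}\in\I_f\subset[f]_\X$. The only divergence is in the step you flagged as the main obstacle: the paper simply quotes the Shapiro--Shields fact that $(1-|z|)\log|f(z)|\to 0$ for outer $f$ and applies it pointwise at $z$ (no Harnack needed), whereas you rederive the same lower bound on $\delta_\lambda$ via Harnack plus a Poisson-kernel estimate --- a correct, self-contained proof of the same known fact.
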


\begin{proof}
The proof follows the same lines as  \cite[Theorem 1.1]{Egueh-Kellay-Zarrabi}. Since $f$ is outer, we have $(1 - \abs{z})\log\abs{f(z)}\to 0$ as $|z|\to 1$. See \cite{ShapiroShields}.
    In other words, for all $\varepsilon > 0$, there exists $c_\varepsilon > 0$ such that for every $z \in \D$, 
 \begin{equation}\label{eq:controle-outer-function}
 \abs{f(z)} \ge c_\varepsilon \exp\left(-\frac{\varepsilon}{2(1 - \abs{z})}\right).
\end{equation}
For $\lambda\in\C$, $|\lambda|\neq 1$, define
$$\delta_\lambda := \inf_{z \in \D} \left \{ \abs{\lambda - z} + \abs{f(z)} \right \}.$$
 If $\abs{z - \lambda} \le \frac{\abs{1 - \abs{\lambda}}}{2}$, then 
\[
\frac{\abs{1 - \abs{\lambda}}}{2} \ge \abs{\abs{z} - \abs{\lambda}} \ge \abs{1 - \abs{\lambda}} -\abs{1 - \abs{z}}
\]
whence $1 - \abs{z} \ge \frac{\abs{1 - \abs{\lambda}}}{2}$. We thus get from \eqref{eq:controle-outer-function} that, if $\abs{z - \lambda} \le \frac{\abs{1 - \abs{\lambda}}}{2}$, then 
\[
\abs{f(z)} \ge  c_\varepsilon \exp\left(-\frac{\varepsilon}{2(1 - \abs{z})}\right) \ge  c_\varepsilon \exp\left(-\frac{\varepsilon}{1 - \abs{\lambda}}\right).
\]
In particular, we deduce that for $|\lambda|\neq 1$, we have
    \begin{equation}\label{deltaMin}
        \delta_\lambda \ge \min \left (c_\varepsilon \exp\left(-\frac{\varepsilon}{1 - \abs{\lambda}}\right), \frac{\abs{1 - \abs{\lambda}}}{2} \right) > 0.
    \end{equation}
    Let us now define  $M_\lambda= 1 + \abs{\lambda} + \norm{f}_\infty$. Then there exists $F_\lambda,G_\lambda\in\A$ such that
    \begin{equation}\label{courThm2}
        (z-\lambda)G_\lambda + f F_\lambda \equiv 1 \text{~on~} \D \text{~and~} \norm{G_\lambda}_\A, \norm{F_\lambda}_\A \le C \frac{M_\lambda^{A-1}}{\delta_\lambda^A}.
    \end{equation} 
We may assume that $\I_f\neq \A$, otherwise, according to Lemma~\ref{IdealfIncluCyclf} and (H7), $\mathcal P\subset [f]_\X$, and we get the cyclicity of $f$. Let us introduce the quotient map $\Pi : \A \longrightarrow \A / \I_f$ and the operator 
\[
\begin{array}{rccl}
T:&\A/\I_f&\longmapsto &\A/\I_f \\
&\Pi(g)&\longrightarrow & \Pi(z)\Pi(g).
\end{array}
\]
We want to apply Corollary \ref{coroThmAtzmon} to $T$. For this purpose, we need to check that $T$ satisfies the following three conditions:
\begin{enumerate}
\item[(i)] Its spectrum $\sigma(T)=\{\zeta_0\}$. 
\item[(ii)] There exists $k \ge 0$ and $c > 0$ such that
 \[
 \norm{(T-\lambda I)^{-1}} \le \frac{c \abs{\lambda}^k}{(\abs{\lambda} - 1)^k}, \quad \abs{\lambda} > 1,
 \]
 \item[(iii)] For all $\varepsilon > 0$, there exists $K_\varepsilon > 0$ such that 
 
 \[
 \norm{(T-\lambda I)^{-1}} \le K_\varepsilon \exp\left(\frac{\varepsilon}{1-\abs{\lambda}}\right), \quad \abs{\lambda} < 1.
 \] 
\end{enumerate}
(i): Since $T$ is the multiplication operator by $\Pi(z)$ on the Banach algebra $\A/\I_f$, it is easy to check that $\sigma(T)\subset \sigma(\Pi(z))$. Now, since $\sigma(T)\neq\emptyset$, it remains to show that $\sigma(\Pi(z))\subset \{\zeta_0\}$. So take $\mu \in \C \backslash \{\zeta_0\}$. Since $z \longmapsto \abs{z - \mu} + \abs{f(z)}$ is continuous on the compact set $\overline{\D}$, there exists $z_0 \in \overline{\D}$ such that
\[
\delta=\inf_{z \in \D} \{ \abs{z - \mu} + \abs{f(z)}\}= \abs{z_0 - \mu} + \abs{f(z_0)}.
\] 
Using that $\mathcal Z(f)=\{\zeta_0\}$ and $\mu\neq \zeta_0$, we see that $\delta>0$. Therefore, by (H6), there exists $g_1,g_2\in\A$ such that 
$(z-\mu)g_1 + fg_2 \equiv 1$ on $\D$. In particular, $(\Pi(z) - \mu \mathbbm{1})\Pi(g_1) = \mathbbm{1}$ and $\Pi(z) - \mu \mathbbm{1}$ is invertible in $\A/\I_f$. Hence $\mu \in \C \backslash \sigma(\Pi(z))$ and $\C \backslash \{\zeta_0\} \subset \C \backslash \sigma(\Pi(z))$. In other words,  $\sigma(\Pi(z)) \subset \{ \zeta_0 \}$, and thus $\sigma(T)=\{\zeta_0\}$. \\

\noindent
(ii): Let $\abs{\lambda} > 1$. According to  Lemma~\ref{lem:spectreS}, $(z-\lambda)^{-1}\in\A$. 
Since $(T-\lambda I)^{-1}$ is the multiplication operator by $\pi((z-\lambda)^{-1})$ on $\A/\I_f$, we have 
 \[
 \norm{(T - \lambda I)^{-1}} \leq \norm{\Pi((z- \lambda)^{-1})}_{\A/\I_f} \leq \|(z-\lambda)^{-1}\|_\A.
 \]
Lemma~\ref{lem:spectreS} implies now that 
\[
\norm{(T - \lambda I)^{-1}}\lesssim \frac{\abs{\lambda}^{p+1}}{(\abs{\lambda} - 1)^{p+1}}.
\]

 \noindent
(iii): Let $\abs{\lambda} < 1$. Since $(T-\lambda I)^{-1}$ is the multiplication operator by $(\Pi(z)-\lambda \mathbbm{1})^{-1}$ on $\A/\I_f$, we have 
 \[
 \norm{(T - \lambda I)^{-1}}\leq \norm{(\Pi(z) - \lambda \mathbbm{1})^{-1}}_{\A/\I_f}.
 \]
 But, from \eqref{courThm2}, we have $\Pi(z-\lambda)\Pi(G_\lambda)=\mathbbm{1}$. Thus $(\Pi(z) - \lambda \mathbbm{1})^{-1} = \Pi(G_\lambda)$, which implies 
 \[
 \norm{(T-\lambda I)^{-1}} \le \norm{\Pi(G_\lambda)}_{\A / \I_f} \le \norm{G_\lambda}_\A \le C \frac{M_\lambda^{A-1}}{\delta_\lambda^A}.
 \]
Observe that, for $\abs{\lambda} < 1$, we have $1 \le M_\lambda = 1 + \abs{\lambda} + \norm{f}_\infty < 2 + \norm{f}_\infty$, which gives that 
 \[
 \|(T-\lambda I)^{-1}\|\lesssim \frac{1}{\delta_\lambda^A}.
 \]
Let us remark that 
\[
\frac{2c_\varepsilon}{1 - \abs{\lambda}}\exp\left(-\frac{\varepsilon}{1 - \abs{\lambda}}\right)\to 0,\quad\mbox{as }|\lambda|\to 1^{-}.
\]
Then, for all $\varepsilon > 0$, there exists $K_\varepsilon' > 0$ such that for all $\lambda \in \D$, 
\[
\frac{2c_\varepsilon}{1 - \abs{\lambda}}\exp\left(-\frac{\varepsilon}{1 - \abs{\lambda}}\right) \le K_\varepsilon',
\]
that is
\[
\frac{c_\varepsilon}{K_\varepsilon'}\exp\left(-\frac{\varepsilon}{1 - \abs{\lambda}}\right)  \le \frac{1 - \abs{\lambda}}{2}.
\]

 Therefore, from \eqref{deltaMin}, we get that 
 \[
 \delta_\lambda \ge K_\varepsilon'' \exp\left(-\frac{\varepsilon}{1 - \abs{\lambda}}\right)
 \] 
 where $K_\varepsilon'' = \min \left (c_\varepsilon, \frac{c_\varepsilon}{K_\varepsilon'} \right)>0$. Finally, we obtain 
 \[
 \norm{(T-\lambda I)^{-1}} \lesssim \frac{1}{K_\varepsilon^{''A}} \exp\left(\frac{A\varepsilon}{1 - \abs{\lambda}}\right).
 \]
Changing $\varepsilon$ by $\varepsilon/A$ if necessary, we then deduce (iii).\\

Therefore, the operator $T$ satisfies the assumptions of Corollary \ref{coroThmAtzmon}, and we get that $(T - \zeta_0 I)^{p+1} = 0$. In other words, 
\[
(\Pi(z) - \zeta_0 \mathbbm{1})^{p+1} = \Pi((z - \zeta_0)^{p+1})= 0.
\]  
Thus $(z - \zeta_0)^{p+1} \in \I_f \subset [f]_\X$. Since $z - \zeta_0$ is cyclic for $S$ in $\X$, we get from Corollary \ref{puissCycl} that $(z - \zeta_0)^{p+1}$ is also cyclic for $S$ in $\X$. Finally, $[f]_\X = \X$ and $f$ is cyclic for $S$ in $\X$.  
\end{proof}

The assumption that $z-\zeta_0$ is cyclic for $S$ in $\X$ (in Theorem~\ref{theorem-2}) is linked with the point spectrum $\sigma_p(S^*)$ of $S^*$ where $S^*$ is the adjoint operator of $S:\X\longrightarrow\X$, and with the property known as \emph{bounded point evaluation}. We say that $\zeta\in\T$ is a bounded point evaluation of $\X$ if there exists a constant $C>0$ such that for every $p\in\mathcal P$, we have 
\[
|p(\zeta)|\leq C\|p\|_\X.
\]
Since the polynomials are dense in $\X$, this means that the functional $p\longmapsto p(\zeta)$ extends uniquely to a continuous functional on $\X$. 
\begin{lem}\label{lem:point-spectrum-S-b-cyclicity}
Let $\X$ satisfying  (H1) to  (H3) and let $\zeta\in\T$. The following assertions are equivalent:
\begin{enumerate}
\item[(i)] The point $\bar\zeta\in \sigma_p(S^*)$. 
\item[(ii)] $\zeta$ is a bounded point evaluation of $\X$.
\item[(iii)] The function $z-\zeta$ is not cyclic for $S$ in $\X$.

\end{enumerate}
\end{lem}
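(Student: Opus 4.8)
The plan is to prove the three equivalences in a cycle, say $(i)\Rightarrow(ii)\Rightarrow(iii)\Rightarrow(i)$, exploiting the standard interplay between evaluation functionals and eigenvectors of $S^*$. The key observation is that for $\zeta\in\T$, a bounded point evaluation at $\zeta$ means there is a functional $E_\zeta\in\X^*$ with $\langle p|E_\zeta\rangle = p(\zeta)$ for every $p\in\mathcal P$; since polynomials are dense by (H3), such an extension is automatically unique. The point $\bar\zeta\in\sigma_p(S^*)$ means there is a nonzero $\varphi\in\X^*$ with $S^*\varphi = \bar\zeta\varphi$, i.e. $\langle Sf|\varphi\rangle = \bar\zeta\langle f|\varphi\rangle$ for all $f\in\X$, equivalently $\langle \chi_1 f|\varphi\rangle = \bar\zeta\langle f|\varphi\rangle$.

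\medskip

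\noindent\textbf{$(ii)\Rightarrow(i)$.} Assume $\zeta$ is a bounded point evaluation, with associated functional $E_\zeta\in\X^*\setminus\{0\}$ (it is nonzero since $\langle\chi_0|E_\zeta\rangle=1$). For $p\in\mathcal P$ we compute $\langle S p|E_\zeta\rangle = \langle \chi_1 p|E_\zeta\rangle = (\chi_1 p)(\zeta) = \zeta p(\zeta) = \zeta\langle p|E_\zeta\rangle$. By density of $\mathcal P$ in $\X$ and continuity of both $E_\zeta$ and $S$, this extends to $\langle Sf|E_\zeta\rangle = \zeta\langle f|E_\zeta\rangle$ for all $f\in\X$, i.e. $S^*E_\zeta = \bar\zeta E_\zeta$ (note $\zeta\in\T$ so $\bar\zeta$ is the conjugate). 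Hence $\bar\zeta\in\sigma_p(S^*)$.

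\medskip

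\noindent\textbf{$(i)\Rightarrow(iii)$.} Suppose $\bar\zeta\in\sigma_p(S^*)$, with $\varphi\in\X^*\setminus\{0\}$ and $S^*\varphi=\bar\zeta\varphi$. Then for every $p\in\mathcal P$ and every $f\in\X$, $\langle p(S)f|\varphi\rangle = \overline{p(\bar\zeta)}\,\langle f|\varphi\rangle$ — more carefully, by induction $\langle S^n f|\varphi\rangle = \zeta^n\langle f|\varphi\rangle$, hence $\langle q f|\varphi\rangle = q(\zeta)\langle f|\varphi\rangle$ for every polynomial $q$. Applying this with $f = (z-\zeta)$ and arbitrary $q$ gives $\langle q\cdot(z-\zeta)|\varphi\rangle = q(\zeta)\langle z-\zeta|\varphi\rangle$; but evaluating the eigenvector relation differently, take any $h\in\X$ and a sequence of polynomials $q_n\to h$ in $\X$ (by (H3)) multiplied by $(z-\zeta)$: we get $\langle q_n(z-\zeta)|\varphi\rangle\to\langle h(z-\zeta)|\varphi\rangle$, while $q_n(\zeta)$ need not converge. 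Instead, argue directly: the relation $\langle q f|\varphi\rangle = q(\zeta)\langle f|\varphi\rangle$ applied to $f=z-\zeta$ shows $\langle q(z-\zeta)|\varphi\rangle = 0$ for all $q\in\mathcal P$, because $q(\zeta)(z-\zeta)$ evaluated — no: we need $\langle (z-\zeta)|\varphi\rangle$. Observe $\langle (z-\zeta)|\varphi\rangle = \langle \chi_1|\varphi\rangle - \zeta\langle\chi_0|\varphi\rangle = \langle S\chi_0|\varphi\rangle - \zeta\langle\chi_0|\varphi\rangle = \zeta\langle\chi_0|\varphi\rangle - \zeta\langle\chi_0|\varphi\rangle = 0$. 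Therefore $\langle q(z-\zeta)|\varphi\rangle = q(\zeta)\langle(z-\zeta)|\varphi\rangle = 0$ for all $q\in\mathcal P$, so $\varphi$ annihilates the dense-in-$[z-\zeta]_\X$ set $\{q(z-\zeta):q\in\mathcal P\}$, hence $\varphi|_{[z-\zeta]_\X}=0$. Since $\varphi\neq 0$, $[z-\zeta]_\X\neq\X$, i.e. $z-\zeta$ is not cyclic for $S$ in $\X$.

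\medskip

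\noindent\textbf{$(iii)\Rightarrow(ii)$.} By contraposition: assume $\zeta$ is \emph{not} a bounded point evaluation and show $z-\zeta$ is cyclic. The negation means the functional $p\mapsto p(\zeta)$ on $\mathcal P$ is unbounded for $\|\cdot\|_\X$, so there is a sequence $p_n\in\mathcal P$ with $\|p_n\|_\X\le 1$ and $p_n(\zeta)\to\infty$; normalizing, set $q_n = p_n/p_n(\zeta)$, so $q_n(\zeta)=1$ and $\|q_n\|_\X\to 0$. Then $q_n\to 0$ in $\X$, and $1 - q_n$ satisfies $(1-q_n)(\zeta)=0$. By the factor theorem for polynomials, $1-q_n = (z-\zeta)r_n$ for some $r_n\in\mathcal P$, so $(z-\zeta)r_n = 1 - q_n \to 1$ in $\X$. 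Hence $1\in[z-\zeta]_\X$, and by Remark~\ref{remark-2.2-terefd}, $z-\zeta$ is cyclic for $S$ in $\X$. This closes the cycle and completes the proof.
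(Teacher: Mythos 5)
Your proof is correct, but it traverses the cycle in the opposite direction from the paper and uses different tools on each edge, so it is worth comparing. The paper proves $(i)\Rightarrow(ii)\Rightarrow(iii)\Rightarrow(i)$: from an eigenvector $k_\zeta$ of $S^*$ it extracts $\langle p|k_\zeta\rangle=p(\zeta)$; it gets $(ii)\Rightarrow(iii)$ by evaluating $p=(z-\zeta)q-1$ at $\zeta$ to contradict $\|(z-\zeta)q-1\|_\X\leq\varepsilon$; and for $(iii)\Rightarrow(i)$ it invokes Hahn--Banach to produce a nonzero functional annihilating $[z-\zeta]_\X$ and checks it is an eigenvector of $S^*$. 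You instead prove $(ii)\Rightarrow(i)\Rightarrow(iii)\Rightarrow(ii)$: the bounded point evaluation itself is exhibited as the eigenvector; the eigenvector is shown directly to annihilate $\{q(z-\zeta):q\in\mathcal P\}$ via $\langle z-\zeta|\varphi\rangle=0$; and, most distinctively, your $(iii)\Rightarrow(ii)$ is a contrapositive that replaces Hahn--Banach with an explicit construction: unboundedness of $p\mapsto p(\zeta)$ yields $q_n$ with $q_n(\zeta)=1$ and $\|q_n\|_\X\to 0$, and the factor theorem writes $1-q_n=(z-\zeta)r_n\to 1$, giving cyclicity by Remark~\ref{remark-2.2-terefd}. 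Your route is thus entirely constructive (no Hahn--Banach), which is a genuine, if modest, gain; the paper's route is slightly shorter on the edge $(ii)\Rightarrow(iii)$. Two cosmetic points: in your preamble you write $\langle Sf|\varphi\rangle=\bar\zeta\langle f|\varphi\rangle$, whereas the paper's convention (the bracket being conjugate-linear in the functional, as its computation $\langle z^k|\bar\zeta k_\zeta\rangle=\zeta\langle z^k|k_\zeta\rangle$ shows) gives $\langle Sf|\varphi\rangle=\zeta\langle f|\varphi\rangle$ --- your actual computations use the correct convention, so only the preamble line should be fixed; and the paragraph for $(i)\Rightarrow(iii)$ contains several false starts that should be deleted, keeping only the final argument beginning at ``Observe $\langle (z-\zeta)|\varphi\rangle=\cdots$''.
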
 
\begin{proof}
$(i)\implies (ii)$: Let $\bar\zeta\in\sigma_p(S^*)$. Hence there is $k_\zeta\in\X^*$, $k_\zeta\neq 0$, such that $S^* k_\zeta=\bar\zeta k_\zeta$. In particular, on one hand, we have, for every $k\geq 0$, 
\[
\langle z^k|S^*k_\zeta\rangle=\langle z^{k+1}|k_\zeta \rangle,
\]
and on the other hand, we also have 
\[
\langle z^k| \bar\zeta k_\zeta \rangle=\zeta \langle z^k | k_\zeta \rangle.
\]
Hence for every $k\geq 0$, 
\[
\langle z^{k+1}| k_\zeta \rangle=\zeta \langle z^k| k_\zeta \rangle,
\]
and by induction, we get
\[
\langle z^k|k_\zeta\rangle=\zeta^k \langle 1|k_\zeta \rangle.
\]
Observe that necessarily $\langle 1|k_\zeta \rangle\neq 0$, otherwise the previous relation would imply that $k_\zeta$ vanishes on the set of polynomials which is dense in $\X$, but that contradicts the fact that $k_\zeta\neq 0$. Hence normalizing $k_\zeta$ if necessary, we may assume that $\langle 1|k_\zeta \rangle=1$ and we thus deduce 
\[
\langle z^k|k_\zeta\rangle=\zeta^k,\qquad k\geq 0.
\]
By linearity, for every $p\in\mathcal P$, we have 
\[
\langle p |k_\zeta \rangle=p(\zeta),
\]
and finally we obtain
\[
|p(\zeta)| \leq \| k_\zeta\|_{\X^*} \|p\|_\X.
\]

$(ii)\implies (iii)$: Assume that there exists a constant $C>0$ such that for every $p\in\mathcal P$, we have 
\begin{equation}\label{eq:3343DSDQ2E3E4}
|p(\zeta)|\leq C \|p\|_\X,
\end{equation}
and argue by absurd, assuming also that $z-\zeta$ is cyclic for $S$. Let $\varepsilon>0$. Then there exists a polynomial $q$ such that 
\[
\|(z-\zeta)q-1\|_\X\leq\varepsilon.
\]
Consider the polynomial $p=(z-\zeta)q-1$ and observe that $p(\zeta)=-1$. According to \eqref{eq:3343DSDQ2E3E4}, we thus have
\[
1\leq C \varepsilon,
\]
and this gives a contradiction for sufficiently small $\varepsilon>0$.

$(iii)\implies (i)$: Assume that $z-\zeta$ is not cyclic for $S$ in $\X$. According to Hahn--Banach Theorem, there exists $\varphi\in\X^*$, $\varphi\neq 0$ such that $\varphi$ vanishes on $[z-\zeta]_\X$. In particular, for every $k\geq 0$, we have
\[
\langle z^k|S^*\varphi-\bar\zeta\varphi\rangle=\langle z^k(z-\zeta)|\varphi\rangle=0.
\]
By linearity, we get that for every $p\in\mathcal P$, 
\[
\langle p|S^*\varphi-\bar\zeta\varphi\rangle=0,
\]
and since $\mathcal P$ is dense in $\X$, we deduce that $S^*\varphi=\bar\zeta\varphi$. But $\varphi\neq 0$, and thus $\bar\zeta\in\sigma_p(S^*)$. 
\end{proof}

\section{Some concrete examples}\label{examples}

We study in this section two applications.

\subsection{De Branges-Rovnyak spaces}

To every non-constant function $b$ in the closed unit ball of $H^\infty$, we associate the de Branges--Rovnyak space $\mathcal H(b)$ 
defined as the reproducing kernel Hilbert space on $\D$  with positive definite kernel given by 
\[
k_\lambda^b(z) = \frac{1 - \overline{b(\lambda)}b(z)}{1 - \overline{\lambda}z},\qquad \lambda,z \in \D.
\]
It is well-known that $\mathcal H(b)$ is contractively contained into $H^2$, and moreover it is invariant with respect to $S$ if and only if $\log(1-|b|)\in L^1(\T)$ \cite[Corollary 20.20]{DBR2}.

So {\bf from now on}, we assume that $b$ is a non-constant function in the closed unit ball of $H^\infty$ which satisfies $\log(1-|b|)\in L^1(\T)$, and we denote by $S_b$ the restriction of the shift operator on $\mathcal H(b)$. Note that, for every $\lambda\in\D$, the evaluation map $f\longmapsto f(\lambda)$ is continuous on $\mathcal H(b)$. It is also known that when $\log(1-|b|)\in L^1(\T)$, the set of polynomials $\mathcal P$ is dense in $\mathcal H(b)$. Hence $\mathcal H(b)$ satisfies (H1) to (H3).
We refer the reader to \cite{DBR2,MR1289670} for an in-depth study of de Branges-Rovnyak spaces and their connections to numerous other topics in operator theory and complex analysis. 

Now consider $\A=\mathfrak{M}(\mathcal H(b))$ the Banach algebra of multipliers of $\mathcal H(b)$. Of course $\A$ satisfies (H4) and also (H5) according to Lemma~\ref{multpXHinftyX}. Now we immediately get from Theorem~\ref{theorem-1bis} and Corollary~\ref{corollaire-1-bis} the following result. 
\begin{theo}\label{Thm1-DBR}
Let $b$ be a function in the closed unit ball of $H^\infty$ such that $\log(1-|b|)\in L^1(\T)$. Assume that $\mathfrak{M}(\mathcal H(b))$ satisfies (H6). Let $f,g\in\mathfrak{M}(\mathcal H(b))$ which satisfies $|g(z)|\leq |f(z)|$ for every $z\in\D$. 
\begin{enumerate}
\item[$(1)$] There exists $N\in\mathbb N^*$ such that 
\[
[g^N]_{\mathcal H(b)}\subset [f]_{\mathcal H(b)}.
\] 
\item[$(2)$] Moreover, if $g$ is cyclic for $S_b$ in $\mathcal H(b)$, then $f$ is also cyclic for $S_b$ in $\mathcal H(b)$. 
\end{enumerate}
\end{theo}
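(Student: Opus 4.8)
The plan is to recognize Theorem~\ref{Thm1-DBR} as the instance of Theorem~\ref{theorem-1bis} and Corollary~\ref{corollaire-1-bis} obtained by taking $\X=\mathcal H(b)$ and $\A=\mathfrak{M}(\mathcal H(b))$; the entire argument then reduces to checking that this pair satisfies (H1)--(H6). Most of this is already recorded in the discussion preceding the statement: $\mathcal H(b)$ satisfies (H1) since it is a reproducing kernel Hilbert space on $\D$, it satisfies (H2) precisely because the hypothesis $\log(1-|b|)\in L^1(\T)$ guarantees that $\mathcal H(b)$ is $S$-invariant (\cite[Corollary~20.20]{DBR2}), and it satisfies (H3) because, under the same hypothesis, polynomials are dense in $\mathcal H(b)$. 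For the algebra $\A=\mathfrak{M}(\mathcal H(b))$, assumption (H4) is automatic --- $\A$ is by definition the multiplier algebra of $\X$ --- and (H6) is assumed in the statement.

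The one point I would spell out is (H5). By Lemma~\ref{multpXHinftyX}(1) we have $\mathfrak{M}(\mathcal H(b))\subset H^\infty\cap\mathcal H(b)$ together with $\|h\|_\infty\le c_1\|h\|_{\mathfrak{M}(\mathcal H(b))}$ for every $h\in\mathfrak{M}(\mathcal H(b))$; hence, for each fixed $\lambda\in\D$, the evaluation functional $h\mapsto h(\lambda)$ on $\A$ satisfies $|h(\lambda)|\le\|h\|_\infty\le c_1\|h\|_\A$ and is therefore continuous, which is exactly (H5). So $(\X,\A)=(\mathcal H(b),\mathfrak{M}(\mathcal H(b)))$ meets all of (H1)--(H6).

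With this verification done, part~(1) is immediate from Theorem~\ref{theorem-1bis}: there exists $N\in\N^*$ (indeed $N=[2A]$, where $A$ is the constant in (H6)) with $[g^N]_{\mathcal H(b)}\subset[f]_{\mathcal H(b)}$ for all $f,g\in\mathfrak{M}(\mathcal H(b))$ satisfying $|g|\le|f|$ on $\D$. Part~(2) then follows either directly from Corollary~\ref{corollaire-1-bis}, or from part~(1) combined with Corollary~\ref{puissCycl}: if $g$ is cyclic for $S_b$ then so is $g^N$, whence $[f]_{\mathcal H(b)}\supset[g^N]_{\mathcal H(b)}=\mathcal H(b)$ and $f$ is cyclic. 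I do not expect any real obstacle here --- all the analytic work is already carried by the abstract Theorem~\ref{theorem-1bis} and, above all, by the corona-type estimate (H6), whose validity for $\mathfrak{M}(\mathcal H(b))$ (for example when $b$ is rational and non-inner, by \cite{CoronaHbMult}) is what keeps the statement non-vacuous in the applications; the present proof is purely a matter of matching hypotheses.
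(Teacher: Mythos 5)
Your proposal is correct and follows exactly the paper's own route: the paper likewise verifies (H1)--(H3) for $\mathcal H(b)$ and (H4)--(H5) for $\mathfrak{M}(\mathcal H(b))$ (the latter via Lemma~\ref{multpXHinftyX}, just as you do), takes (H6) as hypothesis, and then invokes Theorem~\ref{theorem-1bis} and Corollary~\ref{corollaire-1-bis}. Nothing is missing.
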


\begin{proof}
Since $\mathcal H(b)$ satisfies (H1) to (H3) and $\mathfrak{M}(\mathcal H(b))$ satisfies (H4) to (H6), it suffices to apply Theorem~\ref{theorem-1bis} and Corollary~\ref{corollaire-1-bis}.
\end{proof}
The previous result leads to the following question.
\begin{quest}
Let $b$ be a function in the closed unit ball of $H^\infty$ such that $\log(1-|b|)\in L^1(\T)$. Can we characterize those $b$ such that $\mathfrak{M}(\mathcal H(b))$ satisfies (H6)?
\end{quest}

\begin{rem}
In \cite[Theorem 6.5]{CoronaHbMult}, the authors prove that when $b$ is a rational (not inner) function in the closed unit ball of $H^\infty$, then $\mathfrak{M}(\mathcal H(b))$ satisfies (H6) with some constant $A>2+m$ and $m$ is the maximum of the multiplicities of the zeros of the pythagorean mate $a$ of $b$ (see \eqref{eq:factorization-mate} for the definition of $a$). So our results apply in this case. It would be interesting to see if we could have more examples. It should also be noted that the cyclicity of $S_b$ in $\mathcal H(b)$ has been studied recently in \cite{CyclBG,CyclFG,CyclLebreton-Fricain} where different technics were developed. In particular, in the case when $b$ is a rational (not inner) function in the closed unit ball of $H^\infty$, the cyclic vectors have been completely characterized. However, even in this case, Part $(1)$ of Theorem~\ref{Thm1-DBR} seems to be new.
\end{rem}
With regards to the application of Theorem~\ref{theorem-2-bis}, we need to recall the notion of angular derivatives. We say that $b$ has an \textit{angular derivative in the sense of Carathéodory} at $\zeta \in \T$ if $b$ and $b'$ both have a non-tangential limit at $\zeta$ and $\abs{b(\zeta)} = 1$. We denote by $E_0(b)$ the set of such points. It is known that for $\zeta \in \T$, every function $f \in \mathcal{H}(b)$ has a non-tangential limit at $\zeta$ if and only if $\zeta \in E_0(b)$. In particular, if $\zeta\in E_0(b)$, then there exists $C>0$ such that 
\begin{equation}\label{eq:evaluation-ADC}
|f(\zeta)|\leq C \|f\|_{\mathcal H(b)},\qquad f\in\mathcal H(b).
\end{equation}
See \cite[Theorem 21.1]{DBR2}. It is also known \cite[Theorem 28.37]{DBR2} that for $\zeta\in\T$, we have 
\begin{equation}\label{eq:pt-spectrum-Sb}
\mbox{ $\bar\zeta$ is an eigenvalue for $S_b^*$ if and only if $\zeta\in E_0(b)$}. 
\end{equation}
We then get from Theorem~\ref{theorem-2-bis} the following result.
\begin{theo}
Let $b$ be a function in the closed unit ball of $H^\infty$ such that $\log(1-|b|)\in L^1(\T)$. Assume that $\mathfrak{M}(\mathcal H(b))$ satisfies (H6) and (H8). Let $f\in\mathfrak{M}(\mathcal H(b))\cap A(\D)$ and assume that $\mathcal Z(f)=\{\zeta_0\}$ for some $\zeta_0\in\T$. Then the following assertions are equivalent.
\begin{enumerate}
\item[$(i)$] The function $f$ is cyclic for $S_b$.
\item[$(ii)$] The function $f$ is outer and $\zeta_0\notin E_0(b)$. 
\end{enumerate}
\end{theo}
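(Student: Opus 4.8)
The plan is to treat the two implications separately. For $(ii)\Rightarrow(i)$ I would simply invoke Theorem~\ref{theorem-2-bis} with $\X=\mathcal H(b)$ and $\A=\mathfrak M(\mathcal H(b))$, so the first task is to check that all of its hypotheses are in force. The space $\mathcal H(b)$ satisfies (H1)--(H3) (recorded above, using $\log(1-|b|)\in L^1(\T)$); $\A=\mathfrak M(\mathcal H(b))$ satisfies (H4) trivially and (H5) by Lemma~\ref{multpXHinftyX}$(1)$; (H6) and (H8) are assumed in the statement; and (H7) holds because the standing hypothesis $\log(1-|b|)\in L^1(\T)$ makes $\mathcal H(b)$ be $S$-invariant, so that $\chi_1\in\mathfrak M(\mathcal H(b))$. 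Finally, I would observe that combining Lemma~\ref{lem:point-spectrum-S-b-cyclicity} (applied to $\mathcal H(b)$) with \eqref{eq:pt-spectrum-Sb} yields the key equivalence I will use in both directions: for $\zeta\in\T$, the function $z-\zeta$ is cyclic for $S_b$ in $\mathcal H(b)$ if and only if $\zeta\notin E_0(b)$.

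Granting this, $(ii)\Rightarrow(i)$ is immediate: if $f$ is outer and $\zeta_0\notin E_0(b)$, then $z-\zeta_0$ is cyclic for $S_b$, while $f\in\mathfrak M(\mathcal H(b))\cap A(\D)$ is outer with $\mathcal Z(f)=\{\zeta_0\}$, so Theorem~\ref{theorem-2-bis} applies verbatim and gives the cyclicity of $f$ for $S_b$.

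For $(i)\Rightarrow(ii)$ I would argue directly. Assume $f$ is cyclic for $S_b$; by Remark~\ref{remark-2.2-terefd} choose polynomials $p_n$ with $p_nf\to 1$ in $\mathcal H(b)$. Since $\mathcal H(b)$ is contractively contained in $H^2$, the same convergence holds in $H^2$, so $f$ is cyclic for the shift on $H^2$ and Beurling's theorem forces $f$ to be outer. To get $\zeta_0\notin E_0(b)$ I would argue by contradiction: if $\zeta_0\in E_0(b)$, then by \eqref{eq:evaluation-ADC} the (non-tangential) evaluation at $\zeta_0$ is a bounded linear functional on $\mathcal H(b)$; applied to $p_nf-1\in\mathcal H(b)\cap A(\D)$, whose continuous boundary value at $\zeta_0$ equals $p_n(\zeta_0)f(\zeta_0)-1=-1$ because $f\in A(\D)$ with $\zeta_0\in\mathcal Z(f)$, it would give $1\le C\,\|p_nf-1\|_{\mathcal H(b)}\to 0$, a contradiction. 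Hence $\zeta_0\notin E_0(b)$ and $(ii)$ holds.

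The step I expect to need the most care is $(i)\Rightarrow(ii)$: the ``outer'' half genuinely requires stepping outside $\mathcal H(b)$ and using the contractive embedding into $H^2$, and the ``$\zeta_0\notin E_0(b)$'' half hinges on identifying the $\mathcal H(b)$ bounded point evaluation at an angular-derivative point with the ordinary boundary value, which is exactly where the hypotheses $f\in A(\D)$ and $\mathcal Z(f)=\{\zeta_0\}$ enter in an essential way. Everything else reduces to matching the standing assumptions on $b$ with (H1)--(H8) and quoting Theorem~\ref{theorem-2-bis}, Lemma~\ref{lem:point-spectrum-S-b-cyclicity}, and \eqref{eq:pt-spectrum-Sb}.
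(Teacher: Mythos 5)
Your proposal is correct and follows essentially the same route as the paper's proof: the contractive embedding into $H^2$ plus Beurling's theorem for the ``outer'' part, the bounded point evaluation \eqref{eq:evaluation-ADC} at points of $E_0(b)$ for the contradiction, and Theorem~\ref{theorem-2-bis} combined with Lemma~\ref{lem:point-spectrum-S-b-cyclicity} and \eqref{eq:pt-spectrum-Sb} for the converse. Your explicit remark that the non-tangential evaluation of $p_nf-1$ at $\zeta_0$ coincides with its continuous boundary value (since $p_nf-1\in A(\D)$) is a small justification the paper leaves implicit, but the argument is the same.
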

\begin{proof}
$(i)\implies (ii)$: If $f$ is cyclic for $S_b$ in $\mathcal H(b)$, then since $\mathcal H(b)$ is contractively contained in $H^2$, the function $f$ is also cyclic for $S$ in $H^2$. Thus, by Beurling's theorem, $f$ should be outer. On the other hand, assume that $\zeta_0\in E_0(b)$. Let $\varepsilon>0$. There exists $p\in\mathcal P$ such that 
\[
\|pf-1\|_{\mathcal H(b)}\leq \varepsilon.
\]
Hence, according to \eqref{eq:evaluation-ADC}, we get
\[
|p(\zeta_0)f(\zeta_0)-1|\leq C\varepsilon.
\]
Since $f(\zeta_0)=0$, this gives $1\leq C\varepsilon$ and thus a contradiction for sufficiently small $\varepsilon$. Therefore $\zeta_0\notin E_0(b)$.\\

$(ii)\implies (i)$: Assume that $f$ is outer and $\zeta_0\notin E_0(b)$. According to \eqref{eq:pt-spectrum-Sb}, we know that $\bar\zeta_0$ is not in the point spectrum of $S_b^*$. Then it follows from Lemma~\ref{lem:point-spectrum-S-b-cyclicity} that $z-\zeta_0$ is cyclic for $S_b$. Since $\mathcal H(b)$ satisfies (H1) to (H3) and $\mathfrak{M}(\mathcal H(b))$ satisfies (H4) to (H8), we can apply Theorem~\ref{theorem-2-bis} to get that $f$ is cyclic for $S_b$ in $\mathcal H(b)$. 
\end{proof}
The previous result leads to the following question.
\begin{quest}
Let $b$ be a function in the closed unit ball of $H^\infty$ such that $\log(1-|b|)\in L^1(\T)$. Can we characterize those $b$ such that  there exists $C>0$ and $p\in\mathbb N$ with 
\[
\|\chi_n\|_{\mathfrak{M}(\mathcal H(b))}\leq C n^p,\qquad \mbox{for every }n\geq 0\,?
\]
\end{quest}
We can give a positive answer in the case when $b$ is a rational (not inner) function in the closed unit ball of $H^\infty$. 
\begin{prop}
Let $b$ be a rational (not inner) function in the closed unit ball of $H^\infty$. Then there exists $C>0$ and $p\in\mathbb N$ such that 
\[
\|\chi_n\|_{\mathfrak{M}(\mathcal H(b))}\leq C n^p,\qquad \mbox{for every }n\geq 0.
\]
\end{prop}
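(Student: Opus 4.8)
The plan is to exploit the explicit description of $\mathcal{H}(b)$ available for rational, non-inner $b$. Let $a$ be the Pythagorean mate of $b$, the outer function determined by $|a|^2=1-|b|^2$ on $\T$; it is again rational, and being outer with $|a|\le 1$ on $\T$ and having no pole in $\overline{\D}$, it lies in $H^\infty$ (indeed in $\Hol(\overline{\D})$). The structural input I would quote from the literature (see \cite[Chapter~28]{DBR2} and \cite{CyclLebreton-Fricain, CoronaHbMult}) is that $aH^2:=\{af:f\in H^2\}$ is contained in $\mathcal{H}(b)$ as a \emph{closed} subspace of \emph{finite} codimension, say $m$. By the closed graph and open mapping theorems this yields, on the one hand, $\|af\|_{\mathcal{H}(b)}\asymp\|f\|_{H^2}$ for $f\in H^2$, and on the other hand, for any $m$-dimensional subspace $\mathcal{N}$ with $\mathcal{H}(b)=aH^2\oplus\mathcal{N}$ (algebraic direct sum), the comparison $\|af+u\|_{\mathcal{H}(b)}\asymp\|f\|_{H^2}+\|u\|_{\mathcal{H}(b)}$ for $f\in H^2,u\in\mathcal{N}$. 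Since $\mathcal{P}$ is dense in $\mathcal{H}(b)$ and $aH^2$ is closed of finite codimension, one can choose $\mathcal{N}\subset\mathcal{P}$; this we do.

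By definition of the multiplier norm, $\|\chi_n\|_{\mathfrak{M}(\mathcal{H}(b))}=\|M_{\chi_n}\|_{\Lcont(\mathcal{H}(b))}=\|S_b^{\,n}\|_{\Lcont(\mathcal{H}(b))}$, so it suffices to bound $\|S_b^{\,n}\|$ polynomially. As $a\in H^\infty$, we have $S_b(aH^2)=a(zH^2)\subset aH^2$, so relative to $\mathcal{H}(b)=aH^2\oplus\mathcal{N}$ the operator $S_b$ is block upper-triangular,
\[
S_b=\begin{pmatrix} V & B\\ 0 & D\end{pmatrix},\qquad V:=S_b|_{aH^2},\quad D\in\Lcont(\mathcal{N}),\quad B\colon\mathcal{N}\to aH^2,
\]
and hence $S_b^{\,n}=\left(\begin{smallmatrix} V^n & \sum_{j=0}^{n-1}V^{j}BD^{\,n-1-j}\\ 0 & D^{\,n}\end{smallmatrix}\right)$; it remains to estimate the three blocks. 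For $V$: conjugating by the isomorphism $M_a\colon H^2\to aH^2$ identifies $V$ with the shift on $H^2$ up to a uniformly bounded similarity, so $\|V^n\|\le C$ for all $n$, the shift being an isometry. For $D$: it is the operator induced by $S_b$ on the $m$-dimensional quotient $\mathcal{H}(b)/aH^2$, and I claim $\sigma(D)\subset\T$. Indeed, if $\mu$ were an eigenvalue there would be $u\in\mathcal{H}(b)\setminus aH^2$ with $(z-\mu)u=af$ for some $f\in H^2$; if $|\mu|>1$ then $(z-\mu)^{-1}\in H^\infty$, so $u=a\cdot\frac{f}{z-\mu}\in aH^2$, while if $|\mu|<1$, evaluating $(z-\mu)u=af$ at $\mu$ and using $a(\mu)\neq 0$ forces $f(\mu)=0$, so cancelling the Blaschke factor at $\mu$ again yields $u\in aH^2$ --- both contradicting $u\notin aH^2$. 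Thus $D$ is a fixed $m\times m$ matrix with $\sigma(D)\subset\overline{\D}$, whence $\|D^{\,n}\|=O(n^{\,m-1})$. Finally, the off-diagonal block satisfies $\big\|\sum_{j=0}^{n-1}V^{j}BD^{\,n-1-j}\big\|\le\|B\|\sum_{j=0}^{n-1}\|V^{j}\|\,\|D^{\,n-1-j}\|\lesssim\sum_{i=0}^{n-1}(1+i)^{m-1}\lesssim n^{\,m}$. Combining the three estimates gives $\|S_b^{\,n}\|\lesssim n^{\,m}$, which is the assertion with $p=m$.

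The only genuinely nontrivial ingredient is the structural claim that $aH^2$ is closed and of finite codimension in $\mathcal{H}(b)$ when $b$ is rational and not inner; this is precisely where the special features of rational $b$ enter (in particular $m$ equals the total multiplicity of the zeros of $a$ on $\T$), and it is the one point I would take from the cited references rather than reprove. Everything else is soft functional analysis together with the elementary computation $\sigma(D)\subset\T$. Alternatively, from the same structure one first deduces $\mathfrak{M}(\mathcal{H}(b))=H^\infty\cap\mathcal{H}(b)$ --- using $\mathcal{N}\subset\mathcal{P}$ and the $S$-invariance of $\mathcal{H}(b)$ --- and then applies Lemma~\ref{multpXHinftyX} after bounding $\|\chi_n\|_{\mathcal{H}(b)}$; but routing directly through $\|S_b^{\,n}\|$ as above is shorter.
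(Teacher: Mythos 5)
Your argument is correct, and it takes a genuinely different route from the paper. The paper quotes the identity $\mathfrak{M}(\mathcal H(b))=H^\infty\cap\mathcal H(b)$ for rational non-inner $b$, reduces to bounding $\|\chi_n\|_{\mathcal H(b)}$ via Lemma~\ref{multpXHinftyX}$(2)$, and then uses the explicit formula $\|\chi_n\|_{\mathcal H(b)}^2=1+\sum_{j=0}^n|c_j|^2$ (with $c_j$ the Taylor coefficients of $b/a$) together with Cauchy's inequalities, obtaining $|c_j|\lesssim j^{N}$ and hence the exponent $N+\tfrac12$, where $N$ is the total multiplicity of the zeros of $a$ on $\T$. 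You instead import the structural decomposition $\mathcal H(b)=aH^2\oplus\mathcal N$ with $\dim\mathcal N=N$ and the norm equivalence $\|af+u\|_{\mathcal H(b)}\asymp\|f\|_{H^2}+\|u\|$ (this is indeed in the paper's own reference \cite{MR3503356}, so quoting it is legitimate), and bound $\|S_b^n\|$ directly through the block upper-triangular form of $S_b$: the restriction to $aH^2$ is similar to the $H^2$-shift (power-bounded), the induced finite-dimensional quotient operator has spectrum in $\T$ (your eigenvalue argument, using that $a$ is outer and division by $z-\mu$ preserves $H^2$, is correct), and the off-diagonal sum contributes the polynomial factor. Your route avoids both the multiplier characterization $\mathfrak{M}(\mathcal H(b))=H^\infty\cap\mathcal H(b)$ and the coefficient formula for $\|\chi_n\|_{\mathcal H(b)}$, yields the marginally sharper exponent $p=N$, and estimates the quantity one actually needs, namely the operator norm $\|M_{\chi_n}\|_{\Lcont(\mathcal H(b))}$; the paper's route is more computational but stays entirely within norm formulas that hold for any non-extreme $b$, isolating the rationality of $b$ in the single lower bound $|a(z)|\gtrsim(1-|z|)^N$. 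Both arguments rest on one nontrivial external input about rational $b$, just not the same one.
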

\begin{proof}
Since $b$ is a rational (not inner) function in the closed unit ball of $H^\infty$, then it is known that $\mathfrak{M}(\mathcal H(b))=H^\infty\cap\mathcal H(b)$. See \cite{MultRangeSpace}. According to Lemma~\ref{multpXHinftyX}, we get that 
\[
\|\chi_n\|_{\mathfrak{M}(\mathcal H(b))}\lesssim \|\chi_n\|_\infty+\|\chi_n\|_{\mathcal H(b)}=1+\|\chi_n\|_{\mathcal H(b)}.
\]
Thus it is sufficient to prove that 
\begin{equation}\label{eq:98S89D9SSQDSD}
\|\chi_n\|_{\mathcal H(b)}\lesssim n^p,
\end{equation}
for some $p\in\mathbb N$. When $b$ is a rational (not inner) function in the closed unit ball of $H^\infty$, we know that there exists a unique rational outer function $a$ such that $a(0)>0$ and 
\[
|a|^2+|b|^2=1\qquad \mbox{on $\T$.}
\]
See \cite{MR3503356}. We may assume that $\|b\|_\infty=1$, otherwise $\mathcal H(b)=H^2$ (with equivalent norms) and the result is trivial. Thus $a$ has at least one zero on $\T$. Factorize $a$ as 
\begin{equation}\label{eq:factorization-mate}
a(z)=a_1(z)\prod_{i=1}^s (z-\zeta_i)^{m_i},
\end{equation}
where $\zeta_i\in\T$, $m_i\geq 1$, $s\geq 1$ and $a_1$ is rational function without zeros (and poles) in $\overline{\D}$. If the series expansion of $b/a\in \Hol(\D)$ has the form
\[
\frac{b(z)}{a(z)}=\sum_{j=0}^\infty c_j z^j,\qquad |z|<1,
\]
then it is known \cite[Theorem 24.12]{DBR2} that 
\begin{equation}\label{eq:ERDFDSFDS2E3232}
\|\chi_n\|_{\mathcal H(b)}^2=1+\sum_{j=0}^n |c_j|^2.
\end{equation}
From Cauchy's inequalities, we have 
\[
\abs{c_j} = \abs{\frac{(\frac{b}{a})^{(j)}(0)}{j!}} \le \inf_{0 < r < 1} \frac{M(r)}{r^j},
\]
where 
\[
M(r) = \sup_{\abs{z} = r} \abs{\frac{b(z)}{a(z)}},\qquad 0<r<1.
\] 
Using \eqref{eq:factorization-mate}, for $|z|=r$, we have
\[
\abs{a(z)} \gtrsim \prod_{i=1}^s \abs{z-\zeta_i}^{m_i} \ge \prod_{i=1}^s (\abs{\zeta_i} - \abs{z})^{m_i} = (1-r)^N,
\]
where $N=\sum_{i=1}^s m_i$. 
We get that $M(r)\leq (1-r)^{-N}$ and thus
 \[
 \abs{c_j} \lesssim \inf_{0 < r < 1} (1-r)^{-N} r^{-j}.
 \] 
 If we introduce $\varphi(r)=(1-r)^{-N}r^{-j}$, $0<r<1$, it is not difficult to check that $\varphi$ has minimum at $r=1-\frac{N}{j+N}=\frac{j}{j+N}$, which gives that 
 \[
 |c_j|\lesssim \left(1+\frac{j}{N}\right)^N \left(1+\frac{N}{j}\right)^j\lesssim j^N\qquad \mbox{as }j\to\infty.
 \]
 Thus 
 \[
 \sum_{j=0}^n |c_j|^2\lesssim \sum_{j=0}^n j^{2N}\leq n^{2N+1}.
 \]
 Then \eqref{eq:98S89D9SSQDSD} follows from \eqref{eq:ERDFDSFDS2E3232}, which concludes the proof.
\end{proof}

\subsection{Besov-Dirichlet spaces}

For $p \ge 1$ and $\alpha > -1$, the Besov-Dirichlet space $\mathcal{D}_\alpha^p$  consists of functions $f$ holomorphic on $\D$ satisfying 
\[
\norm{f}_{\mathcal{D}_\alpha^p}^p  := \abs{f(0)}^p + (1 + \alpha)\int_\D \abs{f'(z)}^p (1 - \abs{z}^2)^\alpha ~dA(z) < \infty. 
\]
Let us recall that for $p = 2$ and $\alpha = 1$, $\mathcal{D}_\alpha^p = H^2$ the Hardy space of the unit disc, and for $p = 2$ and $\alpha = 0$, $\mathcal{D}_\alpha^p = \mathcal{D}$, the classical Dirichlet space. This example of $\mathcal D_\alpha^p$ was studied in details by Egueh--Kellay--Zarrabi in 
\cite{Egueh-Kellay-Zarrabi}, which was a source of inspiration for us. See also \cite{TheseVectCycl}.

Let us recall that if $1<p<\alpha+1$, then $H^p$ is continuously embedded in $\mathcal{D}_\alpha^p$. Hence every outer functions $f\in H^p$ is cyclic for the shift in $\mathcal{D}_\alpha^p$. See \cite[Proposition 3.1]{CyclDirichletSpace}. On the other hand, if $p > \alpha + 2$, then $\mathcal{D}_\alpha^p \subset A(\D)$ becomes a Banach algebra, and consequently the only cyclic outer functions are the invertible functions. Thus a function $f\in\mathcal D_\alpha^p$ which vanishes at least at one point in $\overline{\D}$ is not cyclic for the shift in $\mathcal{D}_\alpha^p$. See \cite{TheseVectCycl,CyclDirichletSpace}.

We will assume {\bf from now on} that $\alpha+1\leq p\leq \alpha+2$. 
\begin{lem}\label{Lemma-D-alpha-p-hypothese}
Let $p>1$ such that $\alpha+1\leq p\leq \alpha+2$ and let $\A=\mathcal D_\alpha^p\cap A(\D)$ endowed with the norm
\[
\norm{f}_\A^p := \norm{f}_{\infty}^p + \int_\D \abs{f'(z)}^p (1 - \abs{z}^2)^\alpha ~dA(z).
\]
Then $\mathcal D_\alpha^p$ satisfies (H1) to (H3) and $\A$ satisfies (H4) to (H8).
\end{lem}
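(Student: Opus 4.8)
The plan is to verify each of the eight hypotheses for $\X=\mathcal D_\alpha^p$ and $\A=\mathcal D_\alpha^p\cap A(\D)$ separately, since most of them reduce to standard facts about Besov--Dirichlet spaces; the only genuinely substantive points are (H4) (that $\A$ multiplies $\X$), (H6) (Tolokonnikov's corona theorem with control), and (H8) (the polynomial growth of $\|\chi_n\|_\A$). First I would dispose of (H1) to (H3): continuity of point evaluations on $\mathcal D_\alpha^p$ is classical (it follows from the pointwise estimate $|f(\lambda)|\lesssim \|f\|_{\mathcal D_\alpha^p}/(1-|\lambda|)^{?}$ obtained by integrating $f'$ along a radius and applying Hölder), invariance under $\chi_1$ is immediate since $(zf)'=f+zf'$ and $z$ is a bounded multiplier of the weighted Bergman space $L^p_a((1-|z|^2)^\alpha)$, and density of polynomials is the standard dilation argument $f_r(z)=f(rz)\to f$ in $\mathcal D_\alpha^p$ together with Taylor-polynomial approximation of each $f_r$. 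For $\A$ I would note that $(\A,\|\cdot\|_\A)$ is a Banach algebra precisely in the range $\alpha+1\le p\le\alpha+2$: this is exactly the range in which $\mathcal D_\alpha^p\cap A(\D)$ (equivalently $\mathcal D_\alpha^p\subset H^\infty$ fails but the product estimate still holds) is closed under multiplication, which is the content of the Besov-space multiplication theorems; I would cite \cite{Egueh-Kellay-Zarrabi} and \cite{TheseVectCycl} for this.

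Next I would address (H4) and (H5). For (H4), since $\alpha+1\le p\le\alpha+2$, a function $\varphi\in\A$ is bounded, and the Leibniz rule gives $(\varphi f)'=\varphi' f+\varphi f'$; the term $\varphi f'$ is controlled by $\|\varphi\|_\infty\|f\|_{\mathcal D_\alpha^p}$ and the term $\varphi' f$ requires the embedding-type inequality $\int_\D|\varphi'|^p|f|^p(1-|z|^2)^\alpha\,dA\lesssim \|\varphi\|_\A^p\|f\|_{\mathcal D_\alpha^p}^p$, which again is the standard pointwise/Carleson-measure estimate in this parameter range. Thus $\A\subset\mathfrak M(\mathcal D_\alpha^p)$. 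For (H5), continuity of $f\mapsto f(\lambda)$ on $\A$ is trivial since $|f(\lambda)|\le\|f\|_\infty\le\|f\|_\A$.

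For (H6) I would simply invoke V. Tolokonnikov's theorem \cite{Tolokonnikov}, quoted already in Subsection~\ref{subsection-general-context}: for $1<p\le\alpha+2$ the algebra $\A=\mathcal D_\alpha^p\cap A(\D)$ satisfies the corona theorem with control, with exponent $A\ge 4$. For (H7), $\chi_1(z)=z\in A(\D)$ obviously and $\|\chi_1\|_{\mathcal D_\alpha^p}^p=(1+\alpha)\int_\D(1-|z|^2)^\alpha\,dA<\infty$, so $\chi_1\in\A$. Finally, for (H8) I would estimate $\|\chi_n\|_\A$ directly: $\|\chi_n\|_\infty=1$, and
\[
\int_\D |nz^{n-1}|^p(1-|z|^2)^\alpha\,dA(z)=n^p\cdot 2\pi\int_0^1 r^{p(n-1)+1}(1-r^2)^\alpha\,dr,
\]
and since $\int_0^1 r^{p(n-1)+1}(1-r^2)^\alpha\,dr$ is a Beta-function value asymptotic to a constant times $n^{-(\alpha+1)}$, we get $\|\chi_n\|_{\mathcal D_\alpha^p}^p\lesssim n^{p-\alpha-1}$, hence $\|\chi_n\|_\A\lesssim n^{\max(0,1-(\alpha+1)/p)}+1\lesssim n$; so (H8) holds with $p=1$ (in the notation of (H8)).

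The main obstacle is not any single hard proof but rather assembling and correctly citing the Besov-space multiplier and Banach-algebra facts that underlie (H4) and the algebra property of $\A$, since these depend delicately on the constraint $\alpha+1\le p\le\alpha+2$ (outside it either $\mathcal D_\alpha^p$ already sits in $H^\infty$ and is itself an algebra, or $H^p$ embeds in it and the cyclicity picture changes, as recalled just above the lemma); I would be careful to state the multiplication inequality in the precise form needed and reference \cite{Tolokonnikov}, \cite{Egueh-Kellay-Zarrabi}, \cite{TheseVectCycl} rather than reprove it. With all eight hypotheses in hand the lemma is complete, and it then feeds directly into Theorems~\ref{theorem-1bis} and \ref{theorem-2-bis} to recover the Egueh--Kellay--Zarrabi cyclicity results for $\mathcal D_\alpha^p$.
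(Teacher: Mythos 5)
Your proposal is correct and follows essentially the same route as the paper: standard references for (H1)--(H3) and (H5), Tolokonnikov for (H6), and a direct computation of $\norm{\chi_n}_\A$ for (H8) (your Beta-function asymptotics even give a slightly sharper exponent than the paper, which simply bounds $|z|^{(n-1)p}\le 1$ and concludes $\norm{\chi_n}_\A\lesssim n$). The one place where you gloss over what the paper actually writes out is (H4): instead of invoking an unspecified ``pointwise/Carleson-measure estimate'' for the cross term $\int_\D|\varphi'|^p|f|^p(1-|z|^2)^\alpha\,dA$, the paper bounds $|\varphi'(z)|\le \norm{\varphi}_\infty/(1-|z|^2)$ via Cauchy's formula (using $\varphi\in A(\D)$) and then appeals to the embedding of $\mathcal D_\alpha^p$ into the weighted Bergman space $A^p_{\alpha-p}$, so you should either reproduce that short argument or make your citation precise, since this is the only hypothesis whose verification is not purely routine.
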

\begin{proof}
It is well known that $\mathcal D_\alpha^p$ satisfies (H1) to (H3). See \cite{MR814017,BergmanSpace,Tolokonnikov,MR1112319}. Let us check that $\A$ satisfies (H4), that is 
\begin{equation}\label{eq:Dalpha-p-condition-H4ezrzer}
\mathcal D_\alpha^p\cap A(\D)\subset\mathfrak{M}(\mathcal D_\alpha^p).
\end{equation}
Let $f\in\mathcal D_\alpha^p\cap A(\D)$ and $g\in\mathcal D_\alpha^p$. It follows from Minkowski's inequality that 
        \begin{align*}
            \left (\int_\D \abs{(fg)'(z)}^p (1 - \abs{z}^2)^\alpha ~dA(z) \right)^\frac{1}{p} & \le \left (\int_\D \abs{f'(z)}^p \abs{g(z)}^p (1 - \abs{z}^2)^\alpha ~dA(z) \right)^\frac{1}{p} \\ & +  \left (\int_\D \abs{f(z)}^p \abs{g'(z)}^p (1 - \abs{z}^2)^\alpha ~dA(z) \right)^\frac{1}{p}.
        \end{align*}
        The second term is finite since $f \in H^\infty$ and $g \in \mathcal{D}_\alpha^p$. Moreover, according to the Cauchy's formula applied to $f \in A(\D)$, for every $z \in \D$, we have
        \begin{align*}
            \abs{f'(z)} = \abs {\frac{1}{2i\pi}\int_\T \frac{f(\zeta)}{(\zeta - z)^2}~d\zeta} & \le \norm{f}_\infty \frac{1}{2\pi}\int_\T \frac{1}{\abs{\zeta - z}^2} ~\abs{d\zeta} \\ & = \norm{f}_\infty \norm{k_z}_2^2 = \frac{\norm{f}_\infty}{1 - \abs{z}^2}.
        \end{align*}
        Therefore, we obtain that the first term is finite since $g \in \A_{\alpha - p}^p$. See \cite[Proposition 1.11]{BergmanSpace}. Finally, we deduce that $fg \in \mathcal{D}_\alpha^p$ and (H4) holds true. We also deduce from 
        \eqref{eq:Dalpha-p-condition-H4ezrzer} that $\A$ is an algebra. 

It is clear that $\A$ satisfies (H5) because for every $\lambda\in\D$ and every $f\in\A$, we have
\[
|f(\lambda)|\leq \|f\|_\infty\leq \|f\|_\A.
\]

The fact that $\A$ satisfies (H6), with constant $A\geq 4$, is a deep result of Tolokonnikov \cite{Tolokonnikov}. 

Clearly $\A$ satisfies (H7). So it remains to check that $\A$ satisfies (H8). We have
\[
\begin{aligned}
\|\chi_n\|^p_\A&=\|\chi_n\|_\infty^p+\int_\D |\chi'_n(z)|^p (1-|z|^2)^\alpha\,dA(z) \\
&=1+n^p\int_\D |z|^{(n-1)p}(1-|z|^2)^\alpha\,dA(z)\\
&\leq 1+n^p\int_0^{2\pi}\int_0^1 (1-r^2)^\alpha r\,dr\,\frac{d\theta}{2\pi}\\
&\leq 1+n^p \int_0^1 (1-r)^\alpha\,dr\\
&\leq 1+\frac{n^p}{\alpha+1},
\end{aligned}
\]
which gives (H8), and concludes the proof.
\end{proof}

Using Theorem~\ref{theorem-1bis} and Corollary~\ref{corollaire-1-bis}, we recover the following result due to Egueh--Kellay--Zarrabi in \cite{Egueh-Kellay-Zarrabi}.
\begin{theo}[Egueh--Kellay--Zarrabi]\label{thm:DFDFE3020GDSF}
Let $p>1$ such that $\alpha+1\leq p\leq \alpha+2$. Let $f,g\in\mathcal D_\alpha^p\cap A(\D)$ and assume that $|g(z)|\leq |f(z)|$ for every $z\in\D$. 
\begin{enumerate}
\item[$(1)$] There exists $N\in\mathbb N^*$ such that 
\[
[g^N]_{\mathcal D_\alpha^p}\subset [f]_{\mathcal D_\alpha^p}.
\] 
\item[$(2)$] Moreover, if $g$ is cyclic for $S$ in $\mathcal D_\alpha^p$, then $f$ is also cyclic for $S$ in $\mathcal D_\alpha^p$. 
\end{enumerate}
\end{theo}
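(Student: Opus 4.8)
The plan is to verify that the hypotheses of Theorem~\ref{theorem-1bis} and Corollary~\ref{corollaire-1-bis} are met in the present setting, and then simply invoke those results. Concretely, I take $\X=\mathcal D_\alpha^p$ and $\A=\mathcal D_\alpha^p\cap A(\D)$ equipped with the norm of Lemma~\ref{Lemma-D-alpha-p-hypothese}. That lemma already does the bulk of the work: it asserts that $\mathcal D_\alpha^p$ satisfies (H1)--(H3) and that $\A$ satisfies (H4)--(H8); in particular (H4)--(H6) hold, which is exactly what Theorem~\ref{theorem-1bis} and Corollary~\ref{corollaire-1-bis} require. So the proof is essentially a citation of Lemma~\ref{Lemma-D-alpha-p-hypothese} followed by an application of the abstract theorems.

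For part $(1)$, I apply Theorem~\ref{theorem-1bis} directly: since $f,g\in\A$ with $|g(z)|\le|f(z)|$ for every $z\in\D$, there is $N\in\N^*$ (one may take $N=[2A]=8$, since Tolokonnikov's theorem gives (H6) with $A\ge 4$) such that $[g^N]_{\mathcal D_\alpha^p}\subset[f]_{\mathcal D_\alpha^p}$. For part $(2)$, I apply Corollary~\ref{corollaire-1-bis}: if in addition $g$ is cyclic for $S$ in $\mathcal D_\alpha^p$, then $f$ is cyclic for $S$ in $\mathcal D_\alpha^p$. Nothing further is needed.

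There is essentially no obstacle here, since this is an \emph{application} section: all the real analytic content — the embedding $\mathcal D_\alpha^p\cap A(\D)\subset\mathfrak M(\mathcal D_\alpha^p)$ and Tolokonnikov's corona theorem with polynomial control on the solutions — is already packaged in Lemma~\ref{Lemma-D-alpha-p-hypothese} and its references. The only point worth a sentence of care is noting that the norm on $\A$ used in Lemma~\ref{Lemma-D-alpha-p-hypothese} is equivalent, on $\mathcal D_\alpha^p\cap A(\D)$, to whatever norm Tolokonnikov uses, so that (H6) transfers; but this is routine and is subsumed in the cited lemma. Thus the proof reads: ``By Lemma~\ref{Lemma-D-alpha-p-hypothese}, $\X=\mathcal D_\alpha^p$ satisfies (H1)--(H3) and $\A=\mathcal D_\alpha^p\cap A(\D)$ satisfies (H4)--(H6). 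Part $(1)$ follows from Theorem~\ref{theorem-1bis} and part $(2)$ from Corollary~\ref{corollaire-1-bis}.''
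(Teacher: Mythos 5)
Your proof is correct and is essentially identical to the paper's own argument: the paper likewise proves this theorem by citing Lemma~\ref{Lemma-D-alpha-p-hypothese} to verify (H1)--(H6) and then applying Theorem~\ref{theorem-1bis} for part $(1)$ and Corollary~\ref{corollaire-1-bis} for part $(2)$. Your added remark that one may take $N=[2A]=8$ via Tolokonnikov's constant $A\ge 4$ is a harmless (and accurate) refinement consistent with the paper's own remark following Theorem~\ref{theorem-1bis}.
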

\begin{proof}
According to Lemma~\ref{Lemma-D-alpha-p-hypothese}, we can apply Theorem~\ref{theorem-1bis} and Corollary~\ref{corollaire-1-bis} which immediately gives the result. 
\end{proof}
It should be noted that Theorem~\ref{thm:DFDFE3020GDSF} is an extension of a result of Brown-Shields \cite{BrownShields}.  See also \cite{MR1079693}. 

As an application of Theorem~\ref{theorem-2-bis}, we recover now the following result of Egueh--Kellay--Zarrabi in \cite{Egueh-Kellay-Zarrabi}.
\begin{theo}[Egueh--Kellay--Zarrabi]\label{thm2-egueh-kellay-zarrabi}
Let $p>1$ such that $\alpha+1\leq p\leq \alpha+2$. Let $f\in\mathcal D_\alpha^p \cap A(\D)$  be an outer function and assume that $\mathcal Z(f)=\{\zeta_0\}$ for some $\zeta_0\in\T$. Then $f$ is cyclic for $S$ in $\mathcal D_\alpha^p$. 
 \end{theo}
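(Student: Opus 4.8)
The plan is to verify that the hypotheses of Theorem~\ref{theorem-2-bis} are all satisfied in the setting $\X=\mathcal D_\alpha^p$ and $\A=\mathcal D_\alpha^p\cap A(\D)$, and then simply invoke that theorem. By Lemma~\ref{Lemma-D-alpha-p-hypothese} we already know that $\mathcal D_\alpha^p$ satisfies (H1) to (H3) and that $\A$ satisfies (H4) to (H8). The only remaining ingredient of Theorem~\ref{theorem-2-bis} is the assumption that $z-\zeta_0$ is cyclic for $S$ in $\mathcal D_\alpha^p$, and we are given that $f\in\mathcal D_\alpha^p\cap A(\D)$ is outer with $\mathcal Z(f)=\{\zeta_0\}$ and want to conclude that $f$ itself is cyclic.

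First I would establish that $z-\zeta_0$ is cyclic for $S$ in $\mathcal D_\alpha^p$ under the standing hypothesis $\alpha+1\le p\le\alpha+2$. By Lemma~\ref{lem:point-spectrum-S-b-cyclicity}, it is equivalent to show that $\zeta_0$ is not a bounded point evaluation for $\mathcal D_\alpha^p$, i.e.\ there is no constant $C$ with $|p(\zeta_0)|\le C\|p\|_{\mathcal D_\alpha^p}$ for all polynomials $p$. This is a standard fact about Besov--Dirichlet spaces in the regime $p\le\alpha+2$: one tests against an explicit family such as $p_n(z)=(1+\bar\zeta_0 z)^n/2^n$ or the reproducing-kernel-type functions, and checks that $|p_n(\zeta_0)|=1$ while $\|p_n\|_{\mathcal D_\alpha^p}\to\infty$ (the borderline $p=\alpha+2$ being exactly where point evaluations on $\T$ fail to be bounded; for $p<\alpha+2$ it is even more so). Equivalently, one can recall the known description of $\sigma_p(S^*)$ on $\mathcal D_\alpha^p$: when $p\le\alpha+2$ no point of $\T$ gives an eigenvalue of $S^*$, so $\overline{\zeta_0}\notin\sigma_p(S^*)$, and Lemma~\ref{lem:point-spectrum-S-b-cyclicity} then yields cyclicity of $z-\zeta_0$.

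Once cyclicity of $z-\zeta_0$ is in hand, the proof is immediate: apply Theorem~\ref{theorem-2-bis} with $\X=\mathcal D_\alpha^p$, $\A=\mathcal D_\alpha^p\cap A(\D)$, and the given outer $f$ with $\mathcal Z(f)=\{\zeta_0\}$, to conclude that $f$ is cyclic for $S$ in $\mathcal D_\alpha^p$. I expect the main (and really only) obstacle to be the justification that $z-\zeta_0$ is cyclic, i.e.\ that boundary point evaluations are unbounded on $\mathcal D_\alpha^p$ in the range $\alpha+1\le p\le\alpha+2$; this is classical but should be stated cleanly, perhaps citing \cite{TheseVectCycl} or \cite{Egueh-Kellay-Zarrabi} where the relevant computation of $\sigma_p(S^*)$ or the failure of bounded point evaluations is carried out. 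Everything else is bookkeeping already done in Lemma~\ref{Lemma-D-alpha-p-hypothese}.
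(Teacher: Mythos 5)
Your overall route is exactly the paper's: invoke Lemma~\ref{Lemma-D-alpha-p-hypothese} to get (H1)--(H8), check that $z-\zeta_0$ is cyclic for $S$ in $\mathcal D_\alpha^p$, and then apply Theorem~\ref{theorem-2-bis}. The paper disposes of the cyclicity of $z-\zeta_0$ by a single citation (\cite[Proposition 4.3.8]{TheseVectCycl}), which is also one of the options you offer, so the structure of your argument is correct and complete.

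One warning about the elaboration you sketch for that step. To show via Lemma~\ref{lem:point-spectrum-S-b-cyclicity} that $\zeta_0$ is \emph{not} a bounded point evaluation, you need a sequence of polynomials with $\abs{p_n(\zeta_0)}/\norm{p_n}_{\mathcal D_\alpha^p}\to\infty$. Your proposed test functions $p_n(z)=(1+\overline{\zeta_0}z)^n/2^n$ satisfy $\abs{p_n(\zeta_0)}=1$ with $\norm{p_n}_{\mathcal D_\alpha^p}\to\infty$, which is the \emph{wrong} direction: that is perfectly consistent with the evaluation being bounded. The standard witnesses are instead partial sums of the type $\sum_{k=1}^n(\overline{\zeta_0}z)^k/k$ (for the Dirichlet case one gets value $\sim\log n$ against norm $\sim\sqrt{\log n}$), or one simply cites the known computation. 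With that correction, or with the citation you already propose as an alternative, the proof is the same as the paper's.
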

\begin{proof}
It is known that for every $\zeta\in\T$, $z-\zeta$ is cyclic for $S$ in $\mathcal D_\alpha^p$. 
See \cite[Proposition 4.3.8]{TheseVectCycl}. Then, according to Lemma~\ref{Lemma-D-alpha-p-hypothese}, we can apply 
Theorem~\ref{theorem-2-bis} which gives the result. 
 \end{proof}
Note that the case of the classical Dirichlet space $\mathcal D$ was discovered by Hedenmalm and Shields \cite{MR1042516} and generalized by Richter and Sundberg \cite{MR1145733}. Theorem \ref{thm2-egueh-kellay-zarrabi} was already obtained by Kellay, Lemanach and Zarrabi in \cite{CyclDirichletSpace} for $\alpha+1<p\leq \alpha+2$ using technics from  \cite{MR1042516}. Thanks to \cite[Theorem 3]{MR1042516}, as observed in \cite{Egueh-Kellay-Zarrabi}, Theorem \ref{thm2-egueh-kellay-zarrabi} remains true under the assumption that 
$\mathcal Z(f)$ is a countable set. 

\subsection{Dirichlet type spaces} 
Given a finite positive Borel measure $\mu$ on the closed unit disc $\overline{\mathbb D}$, let 
\[
U_\mu(z)=\int_\D \left(\log\left|\frac{1-\overline{w}z}{z-w}\right|^2\right)\frac{d\mu(w)}{1-|w|^2}+\int_\T \frac{1-|z|^2}{|\zeta-z|^2}\,d\mu(\zeta),\qquad z\in\D.
\]
The function $U_\mu$ is a positive superharmonic function on $\D$ and we associate to it the Dirichlet type space $\mathcal D(\mu)$ defined as the space of analytic functions $f$ on $\D$ satisfying 
\[
\int_\D |f'(z)|^2 U_\mu(z)\,dA(z)<\infty,
\]
where $dA$ stands the normalized area measure. It is known that $\mathcal D(\mu)\subset H^2$ and if for $f\in\mathcal D(\mu)$, we define 
\begin{equation}\label{eq:norme-Dmu}
\|f\|^2_{\mathcal D(\mu)}=\|f\|_2^2+\int_\D |f'(z)|^2 U_\mu(z)\,dA(z),
\end{equation}
it is known that $\mathcal D(\mu)$ is a reproducing kernel Hilbert space. These Dirichlet type spaces are important in model theory. See \cite{habilitation-aleman}. It turns out that these spaces also enter in our general framework. The key result to check that our assumptions are satisfied is the following deep result of Shuabing Luo \cite{MR4363747}. Since the result is not exactly stated like this, we shall explain how to get this following version

\begin{theo}[S. Luo]\label{thm-Luo}
Let $\mu$ be a finite positive measure on $\overline{\D}$. There exists $C>0$ such that for every $f_1,f_2\in\mathfrak M({\mathcal D(\mu)})$ satisfying 
\[
0 < \delta \le \abs{f_1} + \abs{f_2} \le 1\qquad \text{~on~} \D,
\]
there exists $g_1,g_2\in \mathfrak M({\mathcal D(\mu)})$ such that $f_1g_1+f_2g_2\equiv 1$ on $\D$  and 
\[
\|g_1\|_{\mathfrak M(\mathcal D(\mu))},\,\|g_2\|_{\mathfrak M(\mathcal D(\mu))}\leq \frac{C}{\delta^4}.
\]
\end{theo}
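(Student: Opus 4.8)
The plan is to deduce Theorem~\ref{thm-Luo} from Luo's results in \cite{MR4363747} by transferring a corona-type statement for the analytic functions in the Dirichlet-type space $\mathcal D(\mu)$ to the multiplier algebra $\mathfrak M(\mathcal D(\mu))$, while keeping track of the quantitative control in $\delta$. First I would recall the precise statement proved by Luo: for $\mu$ a finite positive Borel measure on $\overline{\D}$, the multiplier algebra $\mathfrak M(\mathcal D(\mu))$ satisfies the corona property, and more precisely there is an absolute constant together with a power of $\delta$ governing the norm of the solutions. Since the result in \cite{MR4363747} is stated with a (possibly unspecified or differently normalized) exponent, the first step is to identify that the exponent can be taken to be $4$; this is consistent with the exponent $A\ge 4$ mentioned earlier for $\mathcal D_\alpha^p\cap A(\D)$ in Tolokonnikov's theorem, and the mechanism is the same: one solves the $\bar\partial$-equation with the Koszul complex and estimates the solution using Carleson-measure/Green's-function estimates, and a careful bookkeeping of how the relevant Carleson constants depend on $\delta$ yields the exponent $4$.

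Next I would set up the reduction. Given $f_1,f_2\in\mathfrak M(\mathcal D(\mu))$ with $0<\delta\le|f_1|+|f_2|\le 1$ on $\D$, one first notes that $\mathfrak M(\mathcal D(\mu))\subset H^\infty$ with $\|f_i\|_\infty\le\|f_i\|_{\mathfrak M(\mathcal D(\mu))}$ (this is the analogue of Lemma~\ref{multpXHinftyX}, applied with $\X=\mathcal D(\mu)$), so the hypotheses make sense. Then I invoke Luo's corona theorem for $\mathfrak M(\mathcal D(\mu))$ to produce $g_1,g_2\in\mathfrak M(\mathcal D(\mu))$ with $f_1g_1+f_2g_2\equiv 1$ and a norm bound of the form $\|g_i\|_{\mathfrak M(\mathcal D(\mu))}\le C\delta^{-4}$. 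The only genuine content to check is that the constant $C$ can be chosen independent of $f_1,f_2$ and that the $\delta$-dependence is exactly $\delta^{-4}$; here one traces through Luo's construction, observing that all estimates on the correction terms in the Koszul complex are uniform in the data once the Carleson norms are controlled, and those are controlled by a fixed power of $1/\delta$.

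I expect the main obstacle to be purely bibliographical/bookkeeping rather than conceptual: extracting from \cite{MR4363747} the explicit quantitative dependence $\delta^{-4}$, since, as the authors themselves note, the statement there is not phrased in exactly this form. The substantive work is to check that Luo's proof, which establishes the \emph{qualitative} corona theorem for $\mathfrak M(\mathcal D(\mu))$, actually yields this \emph{quantitative} version; concretely, one identifies the places where a constant depending a priori on $\delta$ appears (the size of the auxiliary $\bar\partial$-data, the Carleson-measure constant of the relevant measures built from $|f_1|,|f_2|$ and $U_\mu$, and the norm of the solution operator to $\bar\partial$), and verifies each contributes at most a bounded power of $1/\delta$, summing to $4$. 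Once this is in place, Theorem~\ref{thm-Luo} follows immediately, and in turn it shows that $\A=\mathfrak M(\mathcal D(\mu))$ satisfies (H6) with $A\ge 4$, so that the general results of the paper apply with $\X=\mathcal D(\mu)$.
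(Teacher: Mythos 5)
There is a genuine gap: you have misidentified what Luo's paper actually provides and, as a result, you are missing the one real idea in the derivation. What is proved in \cite{MR4363747} is not a multiplier corona theorem with quantitative bounds that merely needs its constants traced; it is the \emph{space-level} (``baby'' or Toeplitz-type) corona statement: for every $h\in\mathcal D(\mu)$ there exist $\varphi_1,\varphi_2\in\mathcal D(\mu)$ (not in $\mathfrak M(\mathcal D(\mu))$) with $f_1\varphi_1+f_2\varphi_2=h$ and $\|\varphi_\ell\|_{\mathcal D(\mu)}\lesssim \delta^{-4}\|h\|_{\mathcal D(\mu)}$. The passage from this to solutions $g_1,g_2$ lying in the multiplier algebra, with the \emph{multiplier} norm controlled by the same power $\delta^{-4}$, is not bookkeeping: it is exactly the content of the Toeplitz Corona Theorem, which applies here because $\mathcal D(\mu)$ is a reproducing kernel Hilbert space with a complete Nevanlinna--Pick kernel \cite{MR1911187}. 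That commutant-lifting step is what the paper's proof consists of, and it is entirely absent from your proposal. Your alternative plan --- re-running a $\bar\partial$/Koszul-complex argument and checking that each Carleson constant contributes a bounded power of $1/\delta$ --- is not a description of Luo's proof for $\mathcal D(\mu)$, and controlling a multiplier norm (an operator norm on $\mathcal L(\mathcal D(\mu))$) directly by such estimates is substantially harder than controlling the norm of a single solution in the space; asserting that this reduces to ``purely bibliographical'' work is where the argument fails.

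Concretely, the correct route is: fix $f_1,f_2\in\mathfrak M(\mathcal D(\mu))$ with $0<\delta\le|f_1|+|f_2|\le 1$ on $\D$; apply Luo's theorem to get the uniform bound $\delta^{-4}$ on the norm of the solution operator $h\mapsto(\varphi_1,\varphi_2)$ at the level of $\mathcal D(\mu)$; then invoke the Toeplitz Corona Theorem for complete Nevanlinna--Pick spaces \cite{MR1846055,Wick} to produce $g_1,g_2\in\mathfrak M(\mathcal D(\mu))$ with $f_1g_1+f_2g_2\equiv 1$ and $\|g_\ell\|_{\mathfrak M(\mathcal D(\mu))}\lesssim\delta^{-4}$. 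Without the complete Nevanlinna--Pick/Toeplitz corona ingredient, your argument does not close.
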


\begin{proof}
Let $f_1,f_2\in \mathfrak M({\mathcal D(\mu)})$ satisfying 
\[
0 < \delta \le \abs{f_1} + \abs{f_2} \le 1\qquad \text{~on~} \D.
\]
Then it is proved in \cite{MR4363747} that, for every $h\in\mathcal D(\mu)$, there exists $\varphi_1,\varphi_2\in \mathcal D(\mu)$ such that 
\[
f_1\varphi_1+f_2\varphi_2=h,
\]
and for $\ell=1,2$, we have
\[
\|\varphi_\ell\|_{\mathcal D(\mu)}\lesssim \delta^{-4}\|h\|_{\mathcal D(\mu)}.
\]
But is known that $\mathcal D(\mu)$ is a reproducing kernel Hilbert space with a complete Nevanlinna--Pick kernel \cite{MR1911187}. Hence it satisfies the Toeplitz Corona Theorem. See \cite{MR1846055,Wick}.  Thus it follows that there  exists $g_1,g_2\in \mathfrak M(\mathcal D(\mu))$ such that 
\[
f_1g_1+f_2g_2=1,
\]
and for $\ell=1,2$, we have
\[
\|g_\ell\|_{\mathfrak M(\mathcal D(\mu))}\lesssim \delta^{-4},
\]
which concludes the proof.
\end{proof}

\begin{lem}\label{Lemma-D-mu-hypothese}
Let $\mu$ be a finite positive measure on $\overline{\D}$, let $\X=\mathcal D(\mu)$ and $\A=\mathfrak M({\mathcal D(\mu)})$. Then $\X$ satisfies (H1) to (H3) and $\A$ satisfies (H4) to (H8).
\end{lem}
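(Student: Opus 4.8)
The plan is to verify the eight hypotheses (H1)--(H8) for the pair $\X=\mathcal D(\mu)$ and $\A=\mathfrak M(\mathcal D(\mu))$ one by one, relying on the standard theory of Dirichlet type spaces together with Theorem~\ref{thm-Luo}. The conditions (H1)--(H3) are properties of $\X=\mathcal D(\mu)$ alone. For (H1), since $\mathcal D(\mu)\subset H^2$ contractively with respect to the norm \eqref{eq:norme-Dmu}, the evaluation maps $E_\lambda$ on $H^2$ restrict to continuous functionals on $\mathcal D(\mu)$; alternatively one invokes directly that $\mathcal D(\mu)$ is a reproducing kernel Hilbert space. For (H2), one checks $\chi_1 f\in\mathcal D(\mu)$ for $f\in\mathcal D(\mu)$ by a direct estimate: $(\chi_1 f)' = f + zf'$, and one bounds $\int_\D |f(z)|^2 U_\mu(z)\,dA(z)$ using that $f\in H^2$ and that $U_\mu$ is a bounded superharmonic weight away from the singularities, or one simply cites the known fact that $S$ acts boundedly on $\mathcal D(\mu)$. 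For (H3), density of polynomials in $\mathcal D(\mu)$ is classical (see \cite{habilitation-aleman} or the references on Dirichlet type spaces); these three points I would dispatch with citations rather than computation.

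Next I turn to $\A=\mathfrak M(\mathcal D(\mu))$. Condition (H4), $\A\subset\mathfrak M(\X)$, holds tautologically since $\A$ \emph{is} the multiplier algebra, which is a commutative unital Banach algebra when endowed with the multiplier norm. Condition (H5), continuity of point evaluations on $\A$, follows from Lemma~\ref{multpXHinftyX}(1): every $f\in\mathfrak M(\X)$ satisfies $|f(\lambda)|\le \|f\|_\infty\le\|f\|_{\mathfrak M(\X)}$. Condition (H6) is precisely Theorem~\ref{thm-Luo}, with constants $C>0$ and $A=4$; this is the substantive input and the reason the section is built around Luo's result. Condition (H7), $\chi_1\in\A$, amounts to the elementary observation that $z\cdot f = zf\in\mathcal D(\mu)$ whenever $f\in\mathcal D(\mu)$, i.e. $\chi_1$ is a multiplier, which is exactly (H2) restated for $\A=\mathfrak M(\X)$.

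The one point requiring genuine estimation is (H8): I must show $\|\chi_n\|_{\mathfrak M(\mathcal D(\mu))}\le Cn^p$ for some fixed $p\in\mathbb N$. Here I would use Lemma~\ref{multpXHinftyX}(1) to reduce to controlling $\|\chi_n\|_{\mathcal D(\mu)}$ (plus $\|\chi_n\|_\infty = 1$), which requires first knowing $\mathfrak M(\mathcal D(\mu)) = H^\infty\cap\mathcal D(\mu)$ or at least a one-sided comparison; more robustly, one can bound the multiplier norm directly by estimating $\|\chi_n g\|_{\mathcal D(\mu)}$ in terms of $\|g\|_{\mathcal D(\mu)}$. Expanding $(\chi_n g)' = n z^{n-1} g + z^n g'$ and using Minkowski's inequality in \eqref{eq:norme-Dmu}, the term with $z^n g'$ contributes $\|g'\|$-type quantities bounded by $\|g\|_{\mathcal D(\mu)}$ (since $|z^n|\le 1$ and $U_\mu\ge 0$), while the term $nz^{n-1}g$ contributes $n$ times $\left(\int_\D |z|^{2(n-1)}|g(z)|^2 U_\mu(z)\,dA(z)\right)^{1/2}$, which is at most $n\|g\|_{\mathcal D(\mu)}$ after crudely bounding $|z|^{2(n-1)}\le 1$; a sharper bound replacing the brute $|z|^{2(n-1)}\le 1$ by a decay estimate would shave the power but is unnecessary. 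This yields $\|\chi_n\|_{\mathfrak M(\mathcal D(\mu))}\lesssim n$, so (H8) holds with $p=1$. The main obstacle, and the place where one must be careful, is precisely this (H8) estimate: one needs a uniform control of the multiplication of an \emph{arbitrary} element of $\mathcal D(\mu)$ by $z^n$, and the subtlety is that the weight $U_\mu$ may blow up near $\operatorname{supp}\mu\cap\D$, so one cannot pull $|z|^{2(n-1)}$ out against the $H^2$ norm but must keep it inside the weighted integral against $U_\mu$; fortunately $|z|^{2(n-1)}\le 1$ suffices. With all eight hypotheses in hand the lemma is proved.
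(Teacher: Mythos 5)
Your treatment of (H1)--(H7) matches the paper: (H1)--(H3) and (H5)--(H7) are dispatched by citation or by Lemma~\ref{multpXHinftyX}, (H4) is tautological, and (H6) is exactly Theorem~\ref{thm-Luo} with $A\geq 4$. The divergence, and the problem, is in (H8). You expand $(\chi_n g)'=nz^{n-1}g+z^ng'$, apply Minkowski, bound $|z|^{2(n-1)}\le 1$, and then assert that
\[
n\Bigl(\int_\D |g(z)|^2\,U_\mu(z)\,dA(z)\Bigr)^{1/2}\ \le\ n\,\norm{g}_{\mathcal D(\mu)}.
\]
This last inequality is not justified and is not immediate: the norm \eqref{eq:norme-Dmu} controls $\norm{g}_2^2+\int_\D|g'|^2U_\mu\,dA$, i.e.\ the weighted area integral of the \emph{derivative}, whereas you need the weighted area integral of $g$ itself against the (possibly singular) weight $U_\mu$. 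Already for $\mu=\delta_1$, where $U_\mu(z)=(1-|z|^2)/|1-z|^2$, the finiteness of $\int_\D|g|^2U_\mu\,dA$ and its control by $\norm{g}_{\mathcal D(\delta_1)}$ require the Richter--Sundberg local decomposition $g=g(1)+(z-1)h$ with $h\in H^2$ together with the embedding $\mathcal D(\mu)\hookrightarrow L^2(\mu)$; for general $\mu$ one needs the corresponding Carleson-type embedding. So the step you label as ``crude but sufficient'' is in fact the nontrivial heart of the estimate, and as written it is a gap.

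The paper sidesteps this entirely by invoking Aleman's identity $\int_\D|g'|^2U_\mu\,dA=\int_{\overline\D}D_z(g)\,d\mu(z)$ with $D_z(g)=\int_\T\bigl|\tfrac{g(z)-g(\zeta)}{z-\zeta}\bigr|^2dm(\zeta)$, and then estimating the difference quotient of $z^n g$ directly: since $\bigl|\tfrac{z^n-\zeta^n}{z-\zeta}\bigr|\le n$ for $|z|\le1$, $|\zeta|=1$, one gets $D_z(\chi_n g)\le 2n^2|g(z)|^2+2D_z(g)$, and integrating against $\mu$ reduces everything to quantities tied to $\norm{g}_2$ and $\int D_z(g)\,d\mu$. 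If you want to keep your area-integral route, you must either prove or cite the inequality $\int_\D|g|^2U_\mu\,dA\lesssim\norm{g}^2_{\mathcal D(\mu)}$; otherwise switch to the local Dirichlet integral formulation.
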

\begin{proof}
It is well known that $\mathcal D(\mu)$ satisfies (H1) to (H3). See \cite{habilitation-aleman}. Moreover, $\A$ satisfies trivially (H4) and (H7), as well as (H5) according to Lemma 2.1. The hypothesis (H6) follows from Luo's theorem with $A\geq 4$. It thus remains to check that $\mathfrak M({\mathcal D(\mu)})$ satisfies (H8). According to \cite[Theorem IV.1.9]{habilitation-aleman}, for every $g\in\mathcal D(\mu)$, we have
\[
\int_\D |g'(z)|^2 U_\mu(z)\,dA(z)=\int_{\overline{\D}} D_z(g)\,d\mu(z),
\]
where
\[
D_z(g)=\int_\T \left|\frac{g(z)-g(\zeta)}{z-\zeta}\right|^2\,dm(\zeta)
\]
and $m$ is the normalized Lebesgue measure on $\T$. Let $f\in\mathcal D(\mu)$. Using \eqref{eq:norme-Dmu}, we get that 
\begin{eqnarray}
\|\chi_n f\|_{\mathcal D(\mu)}^2&=&\|\chi_n f\|_2^2+\int_{\D}|(\chi_n f)'(z)|^2 U_\mu(z)\,dA(z)\notag\\
&=&\|f\|_2^2+\int_{\overline \D}D_z(\chi_n f)\,d\mu(z).\label{eq:cleH8-Dmu}
\end{eqnarray}
Observe now that 
\[
D_z(\chi_n f)=\int_\T \left|\frac{z^nf(z)-\zeta^nf(\zeta)}{z-\zeta}\right|^2\,dm(\zeta),
\]
and straightforward computations show that 
\[
\left|\frac{z^nf(z)-\zeta^nf(\zeta)}{z-\zeta}\right|^2\leq 2n^2|f(z)|^2+2\left|\frac{f(z)-f(\zeta)}{z-\zeta}\right|^2.
\]
We then deduce that 
\[
D_z(\chi_n f)\leq 2 n^2 \|f\|_2^2+2 D_z(f),
\]
whence, according to \eqref{eq:cleH8-Dmu}, we obtain 
\begin{eqnarray*}
\|\chi_n f\|_{\mathcal D(\mu)}^2 &\leq & \|f\|_2^2+2n^2 \mu(\overline{\D}) \|f\|_2^2 +2\int_{\overline{\D}}D_z(f)\,d\mu(z) \\
&\leq & 2 \|f\|_{\mathcal D(\mu)}^2+2n^2  \mu(\overline{\D})\|f\|_{\mathcal D(\mu)}^2\\
&\lesssim & n^2 \|f\|_{\mathcal D(\mu)}^2.
\end{eqnarray*}
Therefore, we deduce that $\|\chi_n\|_{\mathfrak M(\mathcal D(\mu))}\lesssim n$, which proves (H8).
\end{proof}

Using Theorem~\ref{theorem-1bis} and Corollary~\ref{corollaire-1-bis}, we can recover a partial version of a result of Richter--Sundberg \cite[Corollary 5.5]{RichterSundbergDirichlet} and Aleman \cite{habilitation-aleman}. 
\begin{theo}(Richter--Sundberg, Aleman)\label{thm-aleman-cyclicite}
Let $\mu$ be a finite positive measure on $\overline{\D}$, and let $f,g\in\mathfrak M(\mathcal D(\mu))$ and assume that $|g(z)|\leq |f(z)|$ for every $z\in\D$. 
\begin{enumerate}
\item[$(1)$] There exists $N\in\mathbb N^*$ such that 
\[
[g^N]_{\mathcal D(\mu)}\subset [f]_{\mathcal D(\mu)}.
\] 
\item[$(2)$] Moreover, if $g$ is cyclic for $S$ in $\mathcal D(\mu)$, then $f$ is also cyclic for $S$ in $\mathcal D(\mu)$. 
\end{enumerate}
\end{theo}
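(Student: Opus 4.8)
The plan is to deduce the statement directly from the abstract results of Section~\ref{cyclicity}, exactly as was done above for de Branges--Rovnyak and Besov--Dirichlet spaces. First I would invoke Lemma~\ref{Lemma-D-mu-hypothese}, which guarantees that $\X=\mathcal D(\mu)$ satisfies (H1) to (H3) and that $\A=\mathfrak M(\mathcal D(\mu))$ satisfies (H4) to (H8); only (H1)--(H6) are needed for the present theorem. Granting this, part $(1)$ is an immediate application of Theorem~\ref{theorem-1bis}: since $f,g\in\mathfrak M(\mathcal D(\mu))=\A$ and $|g(z)|\le|f(z)|$ on $\D$, there exists $N\in\mathbb N^*$ with $[g^N]_{\mathcal D(\mu)}\subset[f]_{\mathcal D(\mu)}$ (concretely one may take $N=[2A]$ in the notation of Theorem~\ref{theorem-1bis}, with $A\ge 4$ the corona constant coming from Luo's theorem).

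For part $(2)$, I would simply apply Corollary~\ref{corollaire-1-bis}, or equivalently reproduce its short argument: if $g$ is cyclic for $S$ in $\mathcal D(\mu)$, then, $g$ belonging to $\A$, Corollary~\ref{puissCycl} shows that $g^N$ is cyclic as well, so by part $(1)$ we get $\mathcal D(\mu)=[g^N]_{\mathcal D(\mu)}\subset[f]_{\mathcal D(\mu)}$, i.e. $f$ is cyclic for $S$ in $\mathcal D(\mu)$.

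Consequently the proof is essentially a one-line citation of the previous results, and all the real work is contained in Lemma~\ref{Lemma-D-mu-hypothese}. There, the verification of (H1)--(H3) for $\mathcal D(\mu)$ is classical, (H4), (H5) and (H7) are immediate for the multiplier algebra, and (H8) reduces to the elementary bound $\|\chi_n\|_{\mathfrak M(\mathcal D(\mu))}\lesssim n$, obtained from the local Dirichlet integral representation of the $\mathcal D(\mu)$-norm. The only genuine obstacle is (H6), namely a corona theorem with polynomial control on the solutions in $\mathfrak M(\mathcal D(\mu))$; this is precisely Theorem~\ref{thm-Luo}, which combines Luo's solvability estimate for the corona-type equation on $\mathcal D(\mu)$ with the Toeplitz corona theorem, the latter being available because $\mathcal D(\mu)$ is a complete Nevanlinna--Pick space. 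Once these hypotheses are in place, nothing further is required and the theorem follows.
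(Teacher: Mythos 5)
Your proposal is correct and follows exactly the paper's own argument: the paper proves this theorem by citing Lemma~\ref{Lemma-D-mu-hypothese} and then applying Theorem~\ref{theorem-1bis} and Corollary~\ref{corollaire-1-bis}, which is precisely what you do. Your additional remarks on where the real work lies (Luo's corona estimate combined with the Toeplitz corona theorem for the complete Nevanlinna--Pick space $\mathcal D(\mu)$) accurately reflect the content of Theorem~\ref{thm-Luo} and Lemma~\ref{Lemma-D-mu-hypothese}.
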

\begin{proof}
According to Lemma~\ref{Lemma-D-mu-hypothese}, we can apply Theorem~\ref{theorem-1bis} and Corollary~\ref{corollaire-1-bis} which immediately gives the result. 
\end{proof}
It should be noted that Theorem~\ref{thm-aleman-cyclicite} is proved in \cite{RichterSundbergDirichlet} with $N=1$ and with the weaker assumption that $f,g\in\mathcal D(\mu)$ but with a measure $\mu$ on $\T$. The case of a measure on the closed unit disc is obtained in \cite{habilitation-aleman} with $N=1$. In both papers, the result is obtained using radial approximations technics. 

As an application of Theorem~\ref{theorem-2-bis}, we get the following result.
\begin{theo}
Let $\mu$ be a finite positive measure on $\overline{\D}$ and let $f\in\mathfrak M(\mathcal D(\mu))\cap A(\D)$ be an outer function and assume that $\mathcal Z(f)=\{\zeta_0\}$ for some $\zeta_0\in\T$ which is not a bounded point evaluation of $\mathcal D(\mu)$. Then $f$ is cyclic for $S$ in $\mathcal D(\mu)$. 
\end{theo}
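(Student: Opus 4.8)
The plan is to obtain this statement as a direct consequence of Theorem~\ref{theorem-2-bis}, since all of the structural hypotheses needed there have already been established. Indeed, by Lemma~\ref{Lemma-D-mu-hypothese}, the space $\X=\mathcal D(\mu)$ satisfies (H1) to (H3) and the algebra $\A=\mathfrak M(\mathcal D(\mu))$ satisfies (H4) to (H8); in particular (H6) rests on Theorem~\ref{thm-Luo} and (H8) on the polynomial bound $\|\chi_n\|_{\mathfrak M(\mathcal D(\mu))}\lesssim n$. Thus the only genuinely new point to check, before invoking Theorem~\ref{theorem-2-bis} with this choice of $\X$ and $\A$, is that the linear polynomial $z-\zeta_0$ is cyclic for $S$ in $\mathcal D(\mu)$.

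This is precisely where the hypothesis on $\zeta_0$ comes in. First I would note that since $\mathcal D(\mu)$ satisfies (H1) to (H3), Lemma~\ref{lem:point-spectrum-S-b-cyclicity} applies with $\zeta=\zeta_0$. By assumption $\zeta_0$ is \emph{not} a bounded point evaluation of $\mathcal D(\mu)$, i.e.\ assertion $(ii)$ of Lemma~\ref{lem:point-spectrum-S-b-cyclicity} fails; consequently assertion $(iii)$ fails as well, which is exactly the statement that $z-\zeta_0$ is cyclic for $S$ in $\mathcal D(\mu)$.

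It then remains only to record that $f$ meets the other requirements of Theorem~\ref{theorem-2-bis}: one has $f\in\A\cap A(\D)=\mathfrak M(\mathcal D(\mu))\cap A(\D)$, $f$ is outer, and $\mathcal Z(f)=\{\zeta_0\}$, all by hypothesis. Applying Theorem~\ref{theorem-2-bis} then gives that $f$ is cyclic for $S$ in $\mathcal D(\mu)$, which completes the proof.

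Since essentially all the analytic content — the corona-type estimate of Theorem~\ref{thm-Luo} that yields (H6), the estimate on $\|\chi_n\|_{\mathfrak M(\mathcal D(\mu))}$ that yields (H8), and the Atzmon tauberian argument built into the proof of Theorem~\ref{theorem-2-bis} — has already been carried out, I do not expect a serious obstacle: the argument is a matter of assembling existing pieces. The one point deserving a moment's care is simply the translation between the two equivalent formulations of the assumption on $\zeta_0$, namely ``$\zeta_0$ is not a bounded point evaluation'' versus ``$z-\zeta_0$ is cyclic for $S$'', and this is handled cleanly by Lemma~\ref{lem:point-spectrum-S-b-cyclicity}.
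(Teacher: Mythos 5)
Your proposal is correct and follows exactly the paper's own argument: Lemma~\ref{lem:point-spectrum-S-b-cyclicity} converts the non--bounded-point-evaluation hypothesis into cyclicity of $z-\zeta_0$, and Lemma~\ref{Lemma-D-mu-hypothese} supplies (H1)--(H8) so that Theorem~\ref{theorem-2-bis} applies. Nothing further is needed.
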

 
\begin{proof}
Since $\zeta_0$ is not a bounded point evaluation of $\mathcal D(\mu)$, Lemma~\ref{lem:point-spectrum-S-b-cyclicity} implies that $z-\zeta_0$ is cyclic for $S$ in $\mathcal D(\mu)$. Then, according to Lemma~\ref{Lemma-D-mu-hypothese}, we can apply  Theorem~\ref{theorem-2-bis} which gives the result. 
\end{proof}
It should be noted that in \cite{MR3475457} (in the case when $\mu$ is a measure on $\T$), O. El-Fallah, Y. Elmadani and K. Kellay proved that $\zeta$ is a bounded point evaluation of $\mathcal D(\mu)$ if and only if $c_\mu(\zeta)>0$, where $c_\mu$ is the Choquet capacity associated to $\mathcal D(\mu)$.  Moreover, they also showed that when $\mu$ has countable support, then a function $f\in\mathcal D(\mu)$ is a cyclic vector for the shift  precisely when $f$ is an outer function and $c_\mu(\mathcal Z(f))=0$. 
\bibliographystyle{plain}

\bibliography{sources.bib}

\end{document}